\documentclass[11pt]{amsart}
\usepackage{amsmath, amsthm, amssymb}
\usepackage[margin=2cm]{geometry}
\usepackage{epsfig}
\usepackage{graphicx}
\usepackage{rawfonts}
\usepackage{enumerate}
\usepackage{amssymb}
\usepackage{float} 
\usepackage{graphicx}
\usepackage{pgf,tikz,pgfplots}
\usepackage{mathrsfs}
\usetikzlibrary{arrows}
\usepackage{amsmath}
\usepackage{amsfonts}
\usepackage{amssymb}
\usepackage{amsthm}
\usepackage{graphicx}
\usepackage{booktabs}
\usepackage{caption}
\usepackage{listings}
\usepackage{setspace}
\usepackage[mathscr]{eucal}
\usepackage{pgfplots}
\usepackage{hyperref}
\usepackage{wrapfig}
\usepackage{floatflt,epsfig}
\usepackage{ dsfont }
\usepackage{amscd}
\usepackage{tikz-cd}
\usepackage{fancyhdr}
\usepackage[all]{xy}
\usepackage{latexsym}
\usepackage{amscd}
\usepackage{pifont}
\usepackage{eufrak}
\usepackage{subfig}
\usepackage{easyReview}

\def\Implies{\ifmmode\Longrightarrow \else
	\unskip${}\Longrightarrow{}$\ignorespaces\fi}
\def\implies{\ifmmode\Rightarrow \else
	\unskip${}\Rightarrow{}$\ignorespaces\fi}
\def\iff{\ifmmode\Longleftrightarrow \else
	\unskip${}\Longleftrightarrow{}$\ignorespaces\fi}

\let\:=\colon

\sloppy

\newcommand{\intt}{\mathop{\rm int}\nolimits}

\newcommand{\rank}{\mathop{\rm rank}\nolimits}

\newcommand{\numberset}{\mathbb}

\newcommand{\Z}{\numberset{Z}}

\def\ZZ{{\mathbb Z}}

\newcommand{\lt}{\mathop{\rm in}\nolimits}

\newcommand{\cC}{\mathcal{C}}

\newcommand{\cP}{\mathcal{P}}
\newcommand{\cH}{\mathcal{H}}

\newcommand{\cB}{\mathcal{B}}

\newcommand{\cF}{\mathcal{F}}

\newcommand{\cR}{\mathcal{R}}
\newcommand{\cS}{\mathcal{S}}
\newcommand{\cT}{\mathcal{T}}

\theoremstyle{plain}

\theoremstyle{theorem}

\newtheorem{defn}{Definition}[section]
\newtheorem{prop}[defn]{Proposition}
\newtheorem{thm}[defn]{Theorem}
\newtheorem{lemma}[defn]{Lemma}

\newtheorem{coro}[defn]{Corollary}
\newtheorem{exa}[defn]{Example}

\newtheorem{qst}[defn]{Question}

\theoremstyle{remark}

\begin{document}
	\title[On the rook polynomial of grid polyominoes]{On the rook polynomial of grid polyominoes}
 \author {Rodica Dinu, FRANCESCO NAVARRA}

	\address{
		University of Konstanz, Fachbereich Mathematik und Statistik, Fach D 197 D-78457 Konstanz, Germany, and Institute of Mathematics Simion Stoilow of the Romanian Academy, Calea Grivitei 21, 010702, Bucharest, Romania}
	\email{rodica.dinu@uni-konstanz.de}
	
	\address{Sabanci University, Faculty of Engineering and Natural Sciences, Orta Mahalle, Tuzla 34956, Istanbul, Turkey}
	\email{francesco.navarra@sabanciuniv.edu}

	\keywords{Polyominoes, rook polynomial, simplicial complex, $h$-polynomial.}
	
	\subjclass[2010]{05B50, 05E40}

 \begin{abstract}
 Grid polyominoes form a class of thin polyominoes with one or more holes arranged in a grid-like pattern in the plane. In this paper, we prove that the rook polynomial of grid polyominoes coincides with the $h$-polynomial of their corresponding coordinate ring. Our approach is based on the theory of simplicial complexes and extends previous results for frame polyominoes, which are special cases of polyominoes with exactly one hole.
 \end{abstract}

\maketitle

\section*{Introduction}

Polyominoes are finite collections of unit squares joined edge to edge. While their study in combinatorics dates back to ancient times, Golomb formally defined them in 1953. His later monograph~\cite{golomb} provides an excellent overview of polyomino combinatorics and tiling problems. A polyomino can essentially be viewed as a pruned chessboard on which chess pieces can be placed. The placement of non-attacking rooks on a skew diagram corresponds to the enumeration of permutations with certain properties; this idea was introduced by Kaplansky and Riordan~\cite{KR} and further developed by Riordan~\cite{Riordan}. For a comprehensive exposition of permutations with forbidden positions, we refer the reader to Stanley~\cite[Chapter~2]{StanleyEC}. 
 The \textit{rook problem} concerns counting the number of ways to place $k$ non-attacking rooks on a polyomino $\cP$. Generally speaking, two rooks are considered to be in \textit{non-attacking position} in $\cP$ if they cannot be connected by a path of edge-adjacent cells of $\cP$ along that line. The maximum number of non-attacking rooks that can be placed on a polyomino $\mathcal{P}$ is called the \textit{rook number} of $\mathcal{P}$ and is denoted by $r(\mathcal{P})$. The \textit{rook polynomial} of $\mathcal{P}$ is defined as $
r_\mathcal{P}(t) = \sum_{i=0}^{r(\mathcal{P})} r_i(\mathcal{P}) t^i,
$ where $r_i(\mathcal{P})$ denotes the number of ways to place $i$ non-attacking rooks on $\mathcal{P}$. The problem of determining the rook number and the rook polynomial for a pruned chessboard remains a challenging combinatorial question. Recently, a new approach has been developed to study this problem, by employing algebraic invariants, such as the Castelnuovo-Mumford regularity and the $h$-polynomial, of binomial ideals associated with pruned chessboards. Thanks to the work of Qureshi~\cite{Qureshi}, polyominoes have also gained prominence in commutative algebra. She associated to each polyomino a binomial ideal, called the \textit{polyomino ideal}. This ideal, denoted by $I_{\mathcal{P}}$, is generated by all inner 2-minors of a polyomino $\mathcal{P}$ in the polynomial ring over a field $K$ in the variables $x_v$, where $v$ is a vertex of $\mathcal{P}$. The $K$-algebra $K[\mathcal{P}]$ whose relations are given by $I_{\mathcal{P}}$ is called the \textit{coordinate ring} of $\mathcal{P}$. Since its introduction, significant progress has been made in understanding the algebraic properties of polyomino ideals. For further reading, we refer the reader to some of the most significant contributions in the literature \cite{Cisto_Navarra_closed_path,  Cisto_Navarra_weakly, Cisto_Navarra_Hilbert_series, Dinu_Navarra_Konig, Moradi, Simple equivalent balanced, Not simple with localization, KKV, KV2, Trento, Navarra,  Qureshi, Simple are prime, Shikama}. In~\cite{L-convessi}, Ene et al.\ were the first to establish a relationship between the Castelnuovo-Mumford regularity of $K[\mathcal{P}]$ and the rook number of $\mathcal{P}$, specifically for the class of $L$-convex polyominoes. Building upon this, Rinaldo and Romeo~\cite{Trento3} proved that if $\mathcal{P}$ is a thin simple polyomino, then the $h$-polynomial of $K[\mathcal{P}]$ coincides with the rook polynomial of $\mathcal{P}$. Roughly speaking, a polyomino without holes is called \textit{simple}, while a polyomino that does not contain the square tetromino is called \textit{thin}. Furthermore, they conjectured that this result holds for any thin polyomino, as stated in~\cite[Conjecture 4.5]{Trento3}. These results were later generalized to closed path polyominoes in~\cite{Cisto_Navarra_Rizwan, Cisto_Navarra_Hilbert_series, Navarra}. In~\cite{Kummini rook polynomial}, Kummini and Veer proved~\cite[Conjecture 4.5]{Trento3} for convex polyominoes whose vertex sets are sublattices of $\mathbb{N}^2$. It is important to emphasize that~\cite[Conjecture 4.5]{Trento3} has so far only been examined for polyominoes with at most one hole. In this paper, we extend the analysis to \textit{grid polyominoes}, a more intricate class of thin polyominoes with one or more holes, first formalized in~\cite{Trento}. More precisely, a grid polyomino is obtained by removing finitely many axis-aligned rectangular subregions from a rectangle, yielding a shape that resembles a perforated grid. This class naturally generalizes the classical rectangular chessboard to configurations with holes, for which the rook polynomial is well understood. Indeed, if $\mathcal{B}_{m,n}$ denotes a $m \times n$ chessboard, then $ r_{\mathcal{B}_{m,n}}(t) = \sum_{k=0}^{n} \binom{m}{k} P(n,k) t^k $, where $P(n,k) = n(n-1) \dots (n-k+1)$. This further motivates the study of rook polynomials for grid polyominoes, viewed as chessboards with holes, as a natural and meaningful generalization.

This article is structured as follows. In Section~\ref{polyominoideals}, we introduce the basics of simplicial complexes, polyominoes, and their $K$-algebras. We prove in Theorem~\ref{dim} that the coordinate ring of a grid polyomino $\cP$ is a normal Cohen-Macaulay domain with Krull dimension equal to the difference between the number of vertices and of cells of $\cP$. As a consequence, Corollary~\ref{height} shows that the height of the polyomino ideal of $\cP$ is equal to the number of cells of $\cP$. This result answers affirmatively to \cite[Conjecture 3.6]{Dinu_Navarra_Konig}; look at \cite[Theorem 1.1]{Moradi} for an interesting sufficient condition for this conjecture. The primary aim of this paper is to prove that the rook number and the rook polynomial of a grid polyomino $\cP$ coincide with the Castelnuovo-Mumford regularity and the $h$-polynomial of the coordinate ring of $\cP$, respectively. This result, combined with the use of algebraic software such as \texttt{Macaulay2} (\cite{Package_M2}, \cite{M2}), provides a practical and efficient method for computing the rook polynomial and the rook number of a grid polyomino (see Example \ref{Exa: rook polynomial}). To achieve our main goal, we explore the relationship of the $h$-polynomial of $K[\cP]$ with the facets of the simplicial complex $\Delta_{\cP}$ associated with $\cP$. We extend the technique from \cite[Section 3]{Navarra_Rizwan}, which was originally applied to polyominoes with a single hole to the more general case of grid polyominoes, which may have one or more holes. This is discussed in Section~\ref{Section: Shellability}. In Definition~\ref{Definition: Generalized Step of a facet}, we introduce the concept of a \textit{generalized step} in a face of $\Delta_{\cP}$, extending \cite[Definition 3.3]{Navarra_Rizwan}. This notion is further illustrated in Example~\ref{ex_gen_step}. Next, in Lemmas~\ref{Lemma: Structure generalized step}, \ref{Lemma: Structure generalized step - particular structure 1} and \ref{Lemma: Structure generalized step - particular structure 2}, we describe the possible structures of a generalized step in a grid polyomino. These structures are strongly determined by the combinatorics of the polyomino. Later on, continuing our study of the simplicial complex $\Delta_{\cP}$, we show in Theorem~\ref{Thm: The lexicographic order gives a shelling order} that the set of the facets of $\Delta_{\cP}$ ordered with respect to the descending lexicographical order forms a shelling for $\Delta_{\cP}$. Section~\ref{Sec: Rook} is dedicated to establish a well-defined one-to-one correspondence between the facets of $\Delta_{\cP}$ and the configurations of non-attacking rooks in $\cP$. This task, which constitutes the main innovation of this work, is both challenging and conceptually meaningful. It is not merely a technical argument, but rather a constructive framework that precisely establishes the correspondence between the combinatorial structure of polyominoes and the algebraic properties of their associated simplicial complexes. Specifically, for a general class of polyominoes, where the descending lexicographical order forms a shelling order for the associated simplicial complex, establishing a bijective correspondence between configurations of $k$ non-attacking rooks and facets with $k$ generalized steps is a highly non-trivial problem. In Definition~\ref{Defn: rook configuration to a facet}, we introduce a map $\cR(-)$ that assigns a configuration of $k$ non-attacking rooks to a facet of $\Delta_{\cP}$ with $k$ generalized steps. We then prove that $\cR(-)$ is bijective: the injectivity and surjectivity of this correspondence are established in Propositions~\ref{Prop: For injective} and~\ref{Prop: For surjective}, respectively. From Theorem~\ref{Thm: The lexicographic order gives a shelling order}, we know that the $k$-th coefficient of the $h$-polynomial of $K[\cP]$ equals the number of facets of $\Delta_{\cP}$ with $k$ generalized steps. This leads to the main result of this paper, Theorem~\ref{mainthm}, which shows that the $h$-polynomial of $K[\cP]$ for a grid polyomino $\cP$ equals the rook polynomial of $\cP$, thereby providing a definitive proof of \cite[Conjecture 4.5]{Trento3}. As a consequence, the regularity of the coordinate ring of $\cP$ coincides with the rook number of $\cP$, and the unique Gorenstein grid polyomino is finally determined.

\section{Simplicial complexes, polyominoes and their $K$-algebras}\label{polyominoideals}

 In this section, we recall the definitions and notation of simplicial complexes and, in particular, on polyominoes and their coordinate rings.
 
 Let us start by introducing some basic facts about simplicial complexes. A \textit{finite simplicial complex} $\Delta$ on $[n]:=\{1,\dots,n\}$ is a collection of subsets of $[n]$ satisfying the following two conditions:
	\begin{enumerate}
		\item if $F'\in \Delta$ and $F \subseteq F'$ then $F \in \Delta$;
		\item $\{i\}\in \Delta$ for all $i=1,\dots,n$.
	\end{enumerate}  
	The elements of $\Delta$ are called \textit{faces}; the dimension of a face $F$ is one less than its cardinality and it is denoted by $\dim(F)$. An \textit{edge} of $\Delta$ is a face of dimension 1, while a \textit{vertex} of $\Delta$ is a face of dimension 0. The maximal faces of $\Delta$ with respect to the set inclusion are called \textit{facets}. The dimension of $\Delta$ is given by $\sup\{\dim(F):F\in \Delta\}$. A simplicial complex $\Delta$ is \textit{pure} if all facets have the same dimension. For a pure simplicial complex $\Delta$, the dimension of $\Delta$ is given trivially by the dimension of a facet of $\Delta$. Given a collection $\mathscr{F}=\{F_1,\dots,F_m\}$ of subsets of $[n]$, we denote by $\langle F_1,\dots,F_m\rangle$ or briefly $\langle \mathscr{F}\rangle$ the simplicial complex consisting of all subsets of $[n]$ which are contained in $F_i$, for some $i\in[m]$. This simplicial complex is said to be generated by $F_1,\dots,F_m$; in particular, if $\cF$ is the set of the facets of a simplicial complex $\Delta$, then $\Delta$ is generated by $\cF$. A pure simplicial complex $\Delta$ is called \textit{shellable} if the facets of $\Delta$ can be ordered as $F_1,\dots,F_m$ in such a way that $\langle F_1,\dots,F_{i-1}\rangle \cap \langle F_i\rangle$ is generated by a non-empty set of maximal proper faces of $F_i$, for all $i\in \{2,\dots,m\}$. In this case, $F_1,\dots,F_m$ is called a \textit{shelling} of $\Delta$. \\ 
	Let $\Delta$ be a simplicial complex on $[n]$ and  $R=K[x_1,\dots,x_n]$ be the polynomial ring in $n$ variables over a field $K$. To every collection $F=\{i_1,\dots,i_r\}$ of $r$ distinct vertices of $\Delta$, there is an associated monomial $x_F$ in $R$ where $x_F=x_{i_1}\dots x_{i_r}.$ The monomial ideal generated by all monomials $x_F$ such that $F$ is not a face of $\Delta$ is called \textit{Stanley-Reisner ideal} and it denoted by $I_{\Delta}$. The \textit{face ring} of $\Delta$, denoted by $K[\Delta]$, is defined to be the quotient ring $R/I_{\Delta}$. From \cite[Corollary 6.3.5]{Villareal}, if $\Delta$ is a simplicial complex on $[n]$ of dimension $d$, then $\dim K[\Delta]=d+1=\max\{s: x_{i_1}\cdots x_{i_s}\notin I_{\Delta},i_1<\dots<i_s\}$. We recall here a nice combinatorial interpretation of the $h$-vector of a shellable simplicial complex that will be used later.
	
	\begin{prop}{\cite[Corollary 5.1.14]{Bruns_Herzog}}\label{Prop: shellability BH}
		Let $\Delta$ be a shellable simplicial complex of dimension $d$ with shelling $F_1,\dots,F_m$. For $j\in \{2,\dots,m\}$ we denote by $r_j$ the number of facets of $\langle F_1,\dots,F_{j-1}\rangle\cap \langle F_j\rangle$ and we set $r_1=0$. Let $(h_0,\dots,h_{d+1})$ be the $h$-vector of $K[\Delta]$. Then $h_i=|\{j:r_j=i\}|$ for all $i\in [d+1]$. In particular, up to their order, the numbers $r_j$ do not depend on the particular shelling. 
	\end{prop}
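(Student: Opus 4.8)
The plan is to use the \emph{restriction map} associated with a shelling, deduce the classical half-open interval decomposition of $\Delta$, and then read the $h$-vector directly off the Hilbert--Poincar\'e series of the face ring $K[\Delta]$.

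First I would introduce, for each $j\in\{1,\dots,m\}$, the \emph{restriction}
$$\cR(F_j)=\{v\in F_j : F_j\setminus\{v\}\in\langle F_1,\dots,F_{j-1}\rangle\},$$
so that $\cR(F_1)=\emptyset$. I would then show that the shelling condition forces
$$\langle F_1,\dots,F_{j-1}\rangle\cap\langle F_j\rangle=\big\langle\, F_j\setminus\{v\} : v\in\cR(F_j)\,\big\rangle.$$
The inclusion $\supseteq$ is immediate from the definition of $\cR(F_j)$; for $\subseteq$ one uses that, by shellability, the left-hand side is generated by a non-empty family of \emph{maximal proper} faces of $F_j$, each necessarily of the form $F_j\setminus\{v\}$, and such a facet lies in $\langle F_1,\dots,F_{j-1}\rangle$ exactly when $v\in\cR(F_j)$. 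Since $\Delta$ is pure of dimension $d$ we have $|F_j|=d+1$, so the faces $F_j\setminus\{v\}$, $v\in\cR(F_j)$, are pairwise distinct of dimension $d-1$ and pairwise incomparable; hence they are precisely the facets of the intersection complex, and $r_j=|\cR(F_j)|$ for every $j$ (with $r_1=0$). Identifying these facets is the step that genuinely uses the shelling hypothesis, and I expect it to be the main technical point; everything else is essentially formal.

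The core combinatorial fact is then the partition: the half-open intervals $[\cR(F_j),F_j]:=\{\,G : \cR(F_j)\subseteq G\subseteq F_j\,\}$, for $j=1,\dots,m$, cover every face of $\Delta$ exactly once. Given $G\in\Delta$, let $j$ be minimal with $G\in\langle F_1,\dots,F_j\rangle$; then $G\subseteq F_j$ and $G\notin\langle F_1,\dots,F_{j-1}\rangle$, and it remains to check, for $G\subseteq F_j$, the equivalence
$$G\notin\langle F_1,\dots,F_{j-1}\rangle \iff \cR(F_j)\subseteq G.$$
For $\Leftarrow$, if some $v\in\cR(F_j)$ lay outside $G$ then $G\subseteq F_j\setminus\{v\}\in\langle F_1,\dots,F_{j-1}\rangle$, contradiction; for $\Rightarrow$, if $G$ contained $\cR(F_j)$ then, by the displayed description of the intersection complex, $G$ would not be contained in any $F_j\setminus\{v\}$ with $v\in\cR(F_j)$, yet membership of $G$ in $\langle F_1,\dots,F_{j-1}\rangle\cap\langle F_j\rangle$ would force exactly that. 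Uniqueness of the interval containing $G$ is clear from the minimality of $j$.

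Finally I would compute the Hilbert--Poincar\'e series face by face, using $\rHP_{K[\Delta]}(t)=\sum_{G\in\Delta}\big(t/(1-t)\big)^{|G|}$. Summing over a single interval $[\cR(F_j),F_j]$ — whose elements are obtained by adjoining to $\cR(F_j)$ an arbitrary subset of the $d+1-r_j$ vertices of $F_j\setminus\cR(F_j)$ — gives
$$\sum_{G\in[\cR(F_j),F_j]}\Big(\frac{t}{1-t}\Big)^{|G|}=\Big(\frac{t}{1-t}\Big)^{r_j}\Big(1+\frac{t}{1-t}\Big)^{d+1-r_j}=\frac{t^{r_j}}{(1-t)^{d+1}},$$
using $1+t/(1-t)=1/(1-t)$. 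Summing over $j$ and invoking the partition yields $\rHP_{K[\Delta]}(t)=\big(\sum_{j=1}^m t^{r_j}\big)/(1-t)^{d+1}$; since $\dim K[\Delta]=d+1$ the numerator is exactly the $h$-polynomial of $K[\Delta]$, so comparing coefficients gives $h_i=|\{j : r_j=i\}|$ for all $i\in[d+1]$. The last assertion is then automatic: the $h$-vector is an invariant of the graded ring $K[\Delta]$ and refers to no shelling, so the multiset $\{r_1,\dots,r_m\}$ is the same for every shelling order.
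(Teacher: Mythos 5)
This statement is quoted from Bruns--Herzog \cite[Corollary 5.1.14]{Bruns_Herzog} and the paper offers no proof of its own; your argument via the restriction map $\cR(F_j)$, the partition of $\Delta$ into the half-open intervals $[\cR(F_j),F_j]$, and the face-by-face computation of the Hilbert--Poincar\'e series is exactly the standard proof of the cited result, and it is correct. The only blemish is cosmetic: in the equivalence $G\notin\langle F_1,\dots,F_{j-1}\rangle \iff \cR(F_j)\subseteq G$ your two arguments are attached to the wrong direction labels (the $\Rightarrow$ and $\Leftarrow$ tags are swapped), but both implications are in fact proved.
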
 

 Now, we introduce the definition of a polyomino and the associated $K$-algebras. Let $(i,j),(k,l)\in \ZZ^2$. We say that $(i,j)\leq(k,l)$ if $i\leq k$ and $j\leq l$. Consider $a=(i,j)$ and $b=(k,l)$ in $\ZZ^2$ with $a\leq b$. The set $[a,b]=\{(m,n)\in \ZZ^2: i\leq m\leq k,\ j\leq n\leq l \}$ is called an \textit{interval} of $\ZZ^2$. If $i< k$ and $j<l$, then we say that $[a,b]$ is a \textit{proper} interval. In this case, we call $a$ and $b$ the \textit{diagonal corners} of $[a,b]$, and $c=(i,l)$ and $d=(k,j)$ the \textit{anti-diagonal corners} of $[a,b]$. If $j=l$ (or $i=k$), then $a$ and $b$ are in \textit{horizontal} (or \textit{vertical}) \textit{position}. We denote by $\mathrm{int}([a,b])$ the set $\{(m,n)\in \Z^2: i< m< k,\ j< n< l\}$. A proper interval $C=[a,b]$ with $b=a+(1,1)$ is called a \textit{cell} of $\ZZ^2$; moreover, the elements $a$, $b$, $c$ and $d$ are called respectively the \textit{lower left}, \textit{upper right}, \textit{upper left} and \textit{lower right} \textit{corners} of $C$. The set of vertices of $C$ is $V(C)=\{a,b,c,d\}$ and the set of edges of $C$ is $E(C)=\{\{a,c\},\{c,b\},\{b,d\},\{a,d\}\}$. Let $\cS$ be a non-empty collection of cells in $\Z^2$. Then $V(\cS)=\bigcup_{C\in \cS}V(C)$ and $E(\cS)=\bigcup_{C\in \cS}E(C)$. The \textit{rank} of $\cS$ is the number of cells that belong to $\cS$ and is denoted by $\rank(\cS)$. An interval $[a,b]$ with $a=(i,j)$, $b=(k,j)$ and $i<k$ is called a \textit{horizontal edge interval} of $\cS$ if the sets $\{(\ell,j),(\ell+1,j)\}$ are edges of cells of $\cS$, for all $\ell=i,\dots,k-1$. In addition, if $\{(i-1,j),(i,j)\}$ and $\{(k,j),(k+1,j)\}$ do not belong to $E(\cS)$, then $[a,b]$ is called a \textit{maximal} horizontal edge interval of $\cS$. We define similarly a \textit{vertical edge interval} and a \textit{maximal} vertical edge interval. If $C$ and $D$ are two distinct cells of $\cS$, then a \textit{walk} from $C$ to $D$ in $\cS$ is a sequence $\cC:C=C_1,\dots,C_m=D$ of cells of $\ZZ^2$ such that $C_i \cap C_{i+1}$ is an edge of $C_i$ and $C_{i+1}$, for $i=1,\dots,m-1$. Moreover, if $C_i \neq C_j$ for all $i\neq j$, then $\cC$ is called a \textit{path} from $C$ to $D$. Moreover, if we denote by $(a_i,b_i)$ the lower left corner of $C_i$ for all $i=1,\dots,m$, then $\cC$ has a \textit{change of direction} at $C_k$, for some $2\leq k \leq m-1$, if $a_{k-1} \neq a_{k+1}$ and $b_{k-1} \neq b_{k+1}$; in this case, $\{C_{k-1},C_k,C_{k+1}\}$ is said to be the set of the \textit{cells of a change of direction}. We say that $C$ and $D$ are \textit{connected} in $\cS$ if there exists a path of cells in $\cS$ from $C$ to $D$. A \textit{polyomino} $\cP$ is a non-empty, finite collection of cells in $\Z^2$ where any two cells of $\cP$ are connected in $\cP$. For instance, see Figure \ref{Figure: Polyomino introduction}.

\begin{figure}[h]
\centering
\includegraphics[scale=0.5]{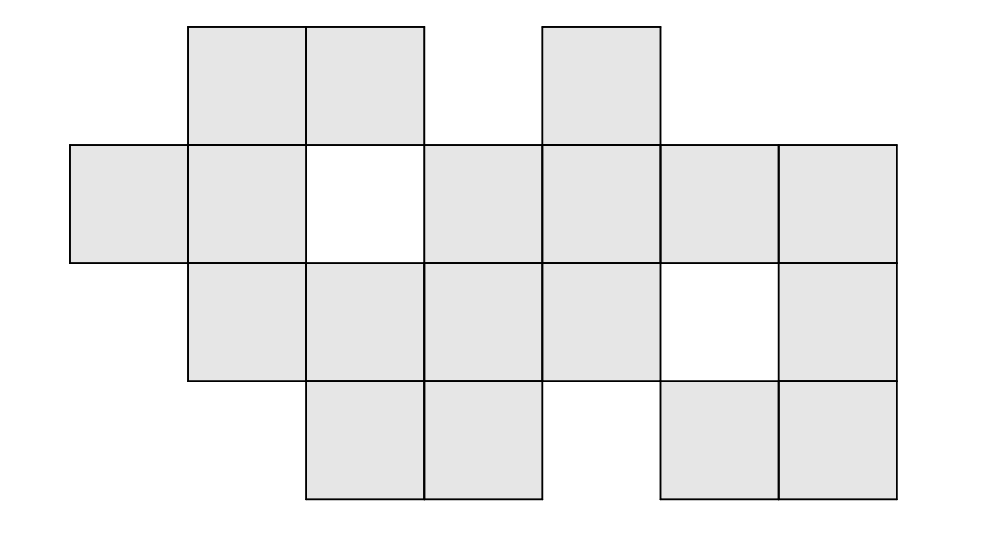}
\caption{A polyomino.}
\label{Figure: Polyomino introduction}
\end{figure}
	
 \noindent A \textit{sub-polyomino} of $\cP$ is a polyomino whose cells belong to $\cP$. We say that $\cP$ is \textit{simple} if for any two cells $C$ and $D$ not in $\cP$ there exists a path of cells not in $\cP$ from $C$ to $D$. A finite collection of cells $\cH$ not in $\cP$ is a \textit{hole} of $\cP$ if any two cells of $\cH$ are connected in $\cH$ and $\cH$ is maximal with respect to set inclusion. For example, the polyomino in Figure \ref{Figure: Polyomino introduction} is not simple. Obviously, each hole of $\cP$ is a simple polyomino, and $\cP$ is simple if and only if it has no hole. A polyomino is said to be \textit{thin} if it does not contain the square tetromino, which is a polyomino consisting of four unit squares arranged in a $2 \times 2$ square block. Consider two cells $A$ and $B$ of $\Z^2$, with $a = (i,j)$ and $b = (k,l)$ representing the lower left corners of $A$ and $B$, and $a < b$. A \textit{cell interval} $[A,B]$ is the set of the cells of $\Z^2$ with lower left corner $(r,s)$ such that $i\leqslant r\leqslant k$ and $j\leqslant s\leqslant l$. If $(i,j)$ and $(k,l)$ are in horizontal (or vertical) position, we say that the cells $A$ and $B$ are in \textit{horizontal} (or \textit{vertical}) \textit{position}. Let $\cP$ be a polyomino. Consider  two cells $A$ and $B$ of $\cP$ in vertical or horizontal position. The cell interval $[A,B]$, containing $n>1$ cells, is called a \textit{block of $\cP$ of rank n} if all cells of $[A,B]$ belong to $\cP$. The cell interval $[A,B]$, containing $n>1$ cells, is called a \textit{column} (resp. \textit{row}) of $\cP$ of rank $n$ if all the cells in $[A,B]$ belong to $\cP$. The cells $A$ and $B$ are called \textit{extremal} cells of $[A,B]$. We also define $]A,B[ = [A,B] \setminus {A,B}$, $]A,B] = [A,B] \setminus {A}$, and $[A,B[ = [A,B] \setminus {B}$. Moreover, a block $\cB$ of $\cP$ is \textit{maximal} if there does not exist any block of $\cP$ which properly contains $\cB$. It is clear that an interval of $\ZZ^2$ identifies a cell interval of $\ZZ^2$ and vice versa: hence we can associate to an interval $I$ of $\ZZ^2$ the corresponding cell interval denoted by $\cP_{I}$. A proper interval $[a,b]$ is called an \textit{inner interval} of $\cP$ if all cells of $\cP_{[a,b]}$ belong to $\cP$. \\
      Let $\cP$ be a polyomino and $S_\cP=K[x_v| v\in V(\cP)]$ be a polynomial ring over a field $K$. If $[a,b]$ is an inner interval of $\cP$, with $a$,$b$ and $c$,$d$, respectively, diagonal and anti-diagonal corners, then the binomial $x_ax_b-x_cx_d$ is called an \textit{inner 2-minor} of $\cP$. $I_{\cP}$ is called \textit{polyomino ideal} of $\cP$ and is defined as the ideal in $S_\cP$ generated by all the inner 2-minors of $\cP$. We set also $K[\cP] = S_\cP/I_{\cP}$, which is the \textit{coordinate ring} of $\cP$. Along the paper, if we do not specify differently, we denote by $<$ the reverse lexicographical order on $S_{\cP}$ induced by the ordering of the variables defined by $x_{ij}>x_{kl}$ if $j > l$, or, $j = l$ and $i > k$. In particular, if $f=x_ax_b-x_cx_d$ is a generator of $I_{\cP}$, with $a$,$b$ and $c$,$d$ respectively diagonal and anti-diagonal corners, then the \textit{initial monomial} $\lt_<(f)$ of $f$ with respect to $<$ is $x_cx_d$. As proved in \cite[Remark 4.2]{Qureshi}, if $\cP$ is a collection of cells, then the set of inner 2-minors of $\cP$ forms the reduced (quadratic) Gr\"obner basis with respect to $<$ if and only if for any two inner intervals $[a, b]$ and $[e, f]$ of $\cP$, with $d$ anti-diagonal corner of both the inner intervals, either $a, e$ or $b, f$ are anti-diagonal corners of an inner interval of $\cP$.  Moreover, if $\cP$ satisfies such a condition, which means that $\lt_{<}(I_\cP)$ is squarefree and it is generated in degree two, then the simplicial complex on $V(\cP)$ having $\lt_<(I_{\cP})$ as Stanley-Reisner ideal is called the \textit{simplicial complex attached to $\cP$} and is denoted by $\Delta_{\cP}$. Finally, the set of the facets of $\Delta_{\cP}$ will be denoted by $\cF_\cP$.
      
     In this article, we will focus on a specific class of polyominoes, called \textit{grid polyominoes}. This class was formalized by Mascia, Rinaldo, and Romeo, in \cite{Trento}. Let us recall the definition.

\begin{defn}\label{grid}\rm 
A polyomino $\cP \subseteq \cP_I$, where $I= [(1, 1),(m, n)]$ is called \textit{grid} if:
\begin{enumerate}
    \item $\cP=\cP_{I} \setminus \bigcup_{i\in[r]\atop  j\in[s]}\cH_{ij}$, where $\cH_{ij} =\cP_{[a_{ij} , b_{ij} ]}$, with $a_{ij} = ((a_{ij})_1,(a_{ij})_2), b_{ij} = ((b_{ij} )_1,(b_{ij} )_2)$, $1 < (a_{ij})_1 <(b_{ij} )_1 < m, 1 < (a_{ij} )_2 < (b_{ij} )_2 < n$;
    \item for any $i \in [r]$ and $l, k \in[s]$ we have $(a_{il})_1 = (a_{ik})_1$ and $(b_{il})_1 = (b_{ik})_1$;
    \item for any $j \in [s]$ and $l, k \in [r]$ we have $(a_{lj})_2 = (a_{kj} )_2$ and $(b_{lj} )_2 = (b_{kj} )_2$;
    \item for any $i \in [r - 1]$ and $j \in [s-1]$, we have $(a_{i+1,j} )_1 = (b_{ij} )_1 + 1$ and $(a_{i,j+1})_2 = (b_{ij} )_2 + 1$.
\end{enumerate}
We set the anti-diagonal corners of $\cH_{ij}$ by $c_{ij}:=((a_{ij})_1, (b_{ij})_2)$ and $d_{ij}:=((b_{ij})_1, (a_{ij})_2)$.
\end{defn}

\noindent Examples of grid polyominoes are presented in Figure~\ref{Figure: grid polyominoes}. Moreover, for simplicity, for a grid polyomino, we will assume the notation in Definition~\ref{grid} throughout the paper.

    \begin{figure}[H]
		\centering
		\subfloat[]{\includegraphics[scale=0.6]{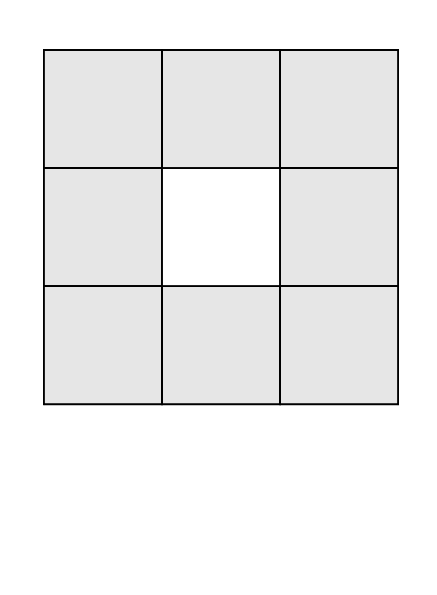}}
		\qquad\qquad
		\subfloat[]{\includegraphics[scale=0.45]{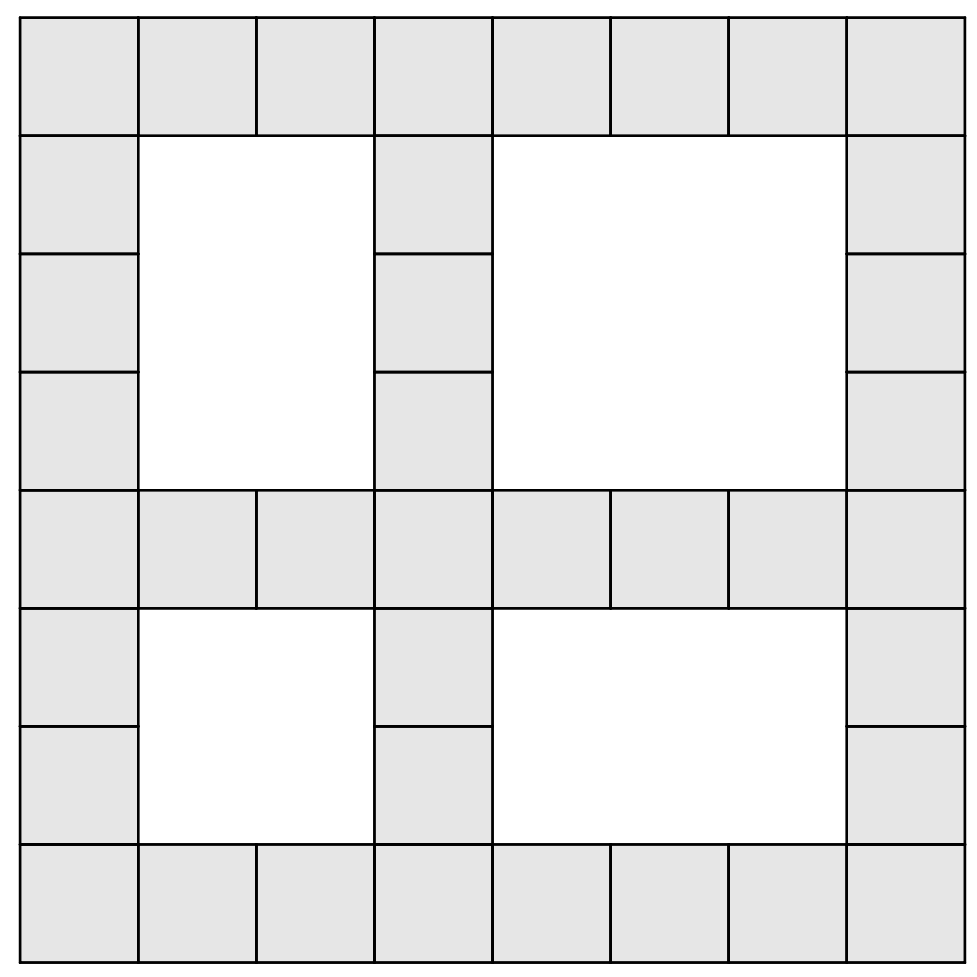}}
		\caption{Examples of grid polyominoes.}
		\label{Figure: grid polyominoes}
    \end{figure}

   \begin{thm}\label{dim}
Let $\cP$ be a grid polyomino. Then $K[\cP]$ is a normal Cohen-Macaulay domain of Krull dimension $\vert V(\cP)\vert -\rank(\cP)$.
    \end{thm}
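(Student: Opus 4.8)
The plan is to exploit the fact that a grid polyomino $\cP$ is a thin polyomino whose inner intervals and edge structure are completely controlled by Definition~\ref{grid}, and to reduce the statement to known results on polyomino ideals whose coordinate rings are normal Cohen-Macaulay domains. First I would observe that the initial ideal $\lt_<(I_{\cP})$ with respect to the reverse lexicographic order $<$ fixed in the Preliminaries is a squarefree monomial ideal: since $\cP$ is thin, no two inner intervals overlap in a way that produces a nonlinear leading term, so the generators $x_cx_d$ of $\lt_<(I_{\cP})$ are squarefree, and a Gr\"obner-basis argument (the inner $2$-minors form a Gr\"obner basis — this is known for polyominoes whose cells form certain configurations, and in particular for grid polyominoes by the structure of their inner intervals) shows $\lt_<(I_{\cP})$ is generated in degree $2$ by squarefree monomials. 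Hence $\lt_<(I_{\cP}) = I_{\Delta_{\cP}}$ is the Stanley--Reisner ideal of the simplicial complex $\Delta_{\cP}$ on the vertex set $V(\cP)$.

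Next I would establish that $\Delta_{\cP}$ is shellable — or at least Cohen--Macaulay — so that $K[\cP]$, which has the same Hilbert function as $K[\Delta_{\cP}]$, is Cohen--Macaulay; this uses that Cohen--Macaulayness passes from $\lt_<(I_{\cP})$ to $I_{\cP}$. (In fact the later Theorem~\ref{Thm: The lexicographic order gives a shelling order} gives an explicit shelling of $\Delta_{\cP}$, which I may invoke, being careful that it is logically prior — if it depends on Theorem~\ref{dim} I would instead give a direct shelling or a direct vertex-decomposability argument here.) For normality and the domain property, I would appeal to the characterization that $K[\cP]$ is a normal domain when the inner $2$-minors form a toric/lattice-ideal-like structure; grid polyominoes contain no zig-zag walks (this is exactly why Mascia--Rinaldo--Romeo singled them out), and for polyomino ideals that are prime, normality follows because $K[\cP]$ is then the edge subring / Veronese-type ring associated to a lattice and one checks the relevant Hochster-type criterion, or more directly: $K[\cP]$ is isomorphic to a homogeneous affine semigroup ring cut out by the lattice spanned by the inner intervals, which is normal because the semigroup is saturated (the thinness and grid structure force the generating configuration to be a sublattice-type configuration as in the convex case).

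Finally, for the dimension count, the Krull dimension equals $1 + \dim \Delta_{\cP}$, which equals the maximal size of a face of $\Delta_{\cP}$, i.e.\ the maximal size of a subset $W \subseteq V(\cP)$ containing no pair $\{c,d\}$ of anti-diagonal corners of an inner interval. Equivalently, taking complements, $\dim K[\cP] = |V(\cP)| - \tau$, where $\tau$ is the minimum size of a vertex cover of the "diagonal hypergraph" whose edges are the pairs $\{c,d\}$. I would show $\tau = \rank(\cP)$ by a combinatorial argument tailored to the grid structure: each of the $\rank(\cP)$ cells of $\cP$ contributes one "diagonal" $2$-minor (the minor of the cell itself), and these cell-minors are pairwise non-overlapping in their anti-diagonal corner pairs only when... — more carefully, I would build an explicit vertex cover of size $\rank(\cP)$ (for instance, choosing for each cell of $\cP$ a suitable corner, consistently along rows/columns so that every inner interval's anti-diagonal pair is hit), and dually an independent set (a face of $\Delta_{\cP}$) of size $|V(\cP)| - \rank(\cP)$. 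The main obstacle I anticipate is precisely this last combinatorial identification: verifying that $\rank(\cP)$ is simultaneously an upper bound for the face dimension complement and achievable, which requires a careful bookkeeping of the vertices, holes, and the "grid lines" separating the holes, and then using $|V(\cP)| = (\text{ambient lattice points}) - (\text{interior points of holes})$ together with the count of cells $\rank(\cP) = mn - \sum_{i,j}|H_{ij}|$ to make the arithmetic close. An alternative, possibly cleaner route for the dimension is to compute $\dim K[\cP]$ directly as the rank of the lattice generated by the inner intervals (once primality/toricity is in hand), and check this rank equals $|V(\cP)| - \rank(\cP)$ by exhibiting $\rank(\cP)$ independent relations coming from the cell-minors and showing there are no more, which again hinges on the grid combinatorics but avoids simplicial-complex face counting.
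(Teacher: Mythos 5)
Your overall strategy coincides with the paper's: show the inner $2$-minors form a (squarefree) Gr\"obner basis, deduce that $K[\cP]$ is a normal Cohen--Macaulay domain, and read off the Krull dimension from the Stanley--Reisner complex $\Delta_{\cP}$ of the initial ideal. However, two steps as you present them have genuine gaps. First, the domain property: you justify primality of $I_{\cP}$ by saying grid polyominoes have no zig-zag walks. The absence of zig-zag walks is only known to be \emph{necessary} for primality; its sufficiency is precisely the open conjecture of Mascia--Rinaldo--Romeo. What the paper actually uses is that those same authors proved primality of grid polyominoes directly, by constructing an explicit toric representation of $I_{\cP}$ (\cite[Theorem 4.4, Corollary 4.5]{Trento}); without citing or reproving that, your argument for ``domain'' does not close. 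Once primality and the squarefree initial ideal are in hand, normality and Cohen--Macaulayness do follow by the standard Sturmfels/Hochster route you gesture at (this is \cite[Corollary 4.26]{binomial ideals} together with \cite[Theorem 6.3.5]{Bruns_Herzog}), so that part is fine.

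Second, the dimension count. Your plan is to exhibit an independent set (face) of size $|V(\cP)|-\rank(\cP)$ \emph{and} a vertex cover of the anti-diagonal hypergraph of size $\rank(\cP)$, and conclude equality. But these two exhibits give the \emph{same} one-sided inequality twice: a face of size $|V(\cP)|-\rank(\cP)$ shows $\dim K[\cP]\geq |V(\cP)|-\rank(\cP)$, and a vertex cover of size $\rank(\cP)$ shows the minimum vertex cover is at most $\rank(\cP)$, which again only bounds the maximum face size from below. To bound it from above you would need a \emph{lower} bound on the vertex cover number, e.g.\ a matching of $\rank(\cP)$ pairwise disjoint anti-diagonal pairs --- and the natural candidate, the anti-diagonal pairs of the individual cells, is not a matching (two cells meeting only at a corner share an anti-diagonal vertex). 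The paper avoids this entirely by first invoking that the initial complex of a toric ideal with squarefree initial ideal is \emph{pure} and shellable (\cite[Theorem 9.6.1]{Villareal}); purity means every facet has the same cardinality, so it suffices to produce one explicit facet --- namely $F_{\cI}\cup\bigcup_{i,j}B_{ij}$, built from the left and top boundary of the ambient rectangle together with the ``lower staircases'' of the holes --- verify its maximality, and count its elements by an inclusion--exclusion over the holes. You should either import purity in the same way, or carry out your alternative suggestion of computing the dimension as the rank of the lattice in the toric representation; as written, the combinatorial duality argument does not determine the dimension.
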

\begin{proof}
 By \cite[Theorem 4.4, Corollary 4.5]{Trento}, the ideal $I_{\cP}$ is prime, as the authors provided an appropriate toric representation for it. Moreover, it is straightforward to verify that $\cP$ satisfies the condition in \cite[Remark 4.2]{Qureshi}. Consequently, the set of generators of $I_{\cP}$ constitutes the reduced Gr\"obner basis of $I_{\cP}$ with respect to $<$. Therefore, $K[\cP]$ is a normal Cohen-Macaulay domain (see \cite[Corollary 4.26]{binomial ideals} and \cite[Theorem 6.3.5]{Bruns_Herzog}).

We establish the Krull dimension formula by analyzing the simplicial complex associated with $\cP$. According to \cite[Theorem 9.6.1]{Villareal}, the simplicial complex $\Delta_{\cP}$ is pure and shellable. Hence, to determine the dimension of $\Delta_{\cP}$, it suffices to compute the cardinality of a facet of $\Delta_{\cP}$. Using the notation in Definition \ref{grid}, for $i\in[r]$ and $j\in[s]$, $\cH_{ij}$ can be expressed as the disjoint union $\intt(\cH_{ij}) \sqcup U_{ij} \sqcup B_{ij},$
where $U_{ij}=[a_{ij}, c_{ij}] \cup [c_{ij}, b_{ij}]$ and $B_{ij}=[a_{ij}+(1,0), d_{ij}] \cup [d_{ij}, b_{ij}-(0,1)]$.
Thus, $|V(\cH_{ij})| = |\intt(\cH_{ij})| + |U_{ij}| + |B_{ij}|$, and therefore,
$|\intt(\cH_{ij})| = |V(\cH_{ij})| - |U_{ij}| - |B_{ij}|$. Since $\cP = \cP_{I} \setminus \bigcup_{i\in[r]\atop j\in[s]}\cH_{ij}$, we have 
\[
\vert V(\cP)\vert = \vert V(\cP_{I})\vert - \sum_{i\in[r]\atop j\in[s]} |\intt(\cH_{ij})| \quad \text{and} \quad \rank(\cP) = \rank(\cP_{I}) - \sum_{i\in[r]\atop j\in[s]} \rank(\cH_{ij}).
\]
Therefore, we obtain 
\begin{equation}
|V(\cP)| - \rank(\cP) = |V(\cP_{I})| - \rank(\cP_{I}) - \sum_{i\in[r]\atop j\in[s]} |V(\cH_{ij})| + \sum_{i\in[r]\atop j\in[s]} (|U_{ij}| + |B_{ij}|) + \sum_{i\in[r]\atop j\in[s]} \rank(\cH_{ij}).
\end{equation}

For any $i \in [r]$ and $j \in [s]$, both $\cP_{I}$ and $\cH_{ij}$ satisfy \cite[Remark 4.2]{Qureshi}. Therefore, consider the simplicial complexes $\Delta_{\cP_{I}}$ and $\Delta_{\cH_{ij}}$ associated with $\cP_{I}$ and $\cH_{ij}$, respectively. It follows that $\dim K[\cP_{I}] = \dim K[\Delta_{\cP_{I}}]$ and $\dim K[\cH_{ij}] = \dim K[\Delta_{\cH_{ij}}]$. Moreover, as $\cP_{I}$ and $\cH_{ij}$ are simple polyominoes, by \cite[Theorem 2.1]{Simple equivalent balanced} and \cite[Corollary 3.3]{def balanced}, we have $\dim K[\cP_{I}] = \vert V(\cP_{I})\vert - \rank(\cP_{I})$ and
$\dim K[\cH_{ij}] = \vert V(\cH_{ij})\vert - \rank(\cH_{ij})$. Since $\Delta_{\cP_{I}}$ and $\Delta_{\cH_{ij}}$ are pure, we can write $\dim (\Delta_{\cP_{I}}) = |F_{I}|$, where $F_I = [(1,1), (1,n)] \cup [(1,n), (m,n)]$, and $\dim (\Delta_{\cH_{ij}}) = |U_{ij}|$. After straightforward computations, we deduce from the equation (1) that $|V(\cP)| - \rank(\cP) = \left| F_{I} \cup \left(\bigcup_{i\in[r]\atop j\in[s]} B_{ij}\right)\right|.$

Observe that for any $v, w \in F_{I} \cup \left(\bigcup_{i\in[r]\atop j\in[s]} B_{ij}\right)$, there is no inner interval of $\cP$ such that $v, w$ are its anti-diagonal corners. Additionally, for any $p \in V(\cP) \setminus \left(F_{I} \cup \left(\bigcup_{i\in[r]\atop j\in[s]} B_{ij}\right)\right)$, there exists a vertex $q \in F_{I} \cup \left(\bigcup_{i\in[r]\atop j\in[s]} B_{ij}\right)$ and an inner interval of $\cP$ whose anti-diagonal corners are $p$ and $q$. Therefore, $F_{I} \cup \left(\bigcup_{i\in[r]\atop j\in[s]} B_{ij}\right)$ is a facet of the simplicial complex $\Delta_{\cP}$, which establishes the desired conclusion.

\end{proof}

\begin{coro}\label{height}
Let $\cP$ be a grid polyomino. Then the height of $I_{\cP}$ is equal to $\rank(\cP).$
\end{coro}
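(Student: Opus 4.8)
The plan is to deduce Corollary~\ref{height} directly from Theorem~\ref{dim} by the standard relation between the Krull dimension of a quotient ring and the height of the defining ideal. Concretely, since $K[\cP] = S_{\cP}/I_{\cP}$ and $S_{\cP}$ is a polynomial ring, we have $\dim S_{\cP} = |V(\cP)|$, and because $I_{\cP}$ is prime (so that $S_{\cP}/I_{\cP}$ is a domain and $S_{\cP}$ is a Cohen--Macaulay, in fact regular, ring) the dimension formula $\height I_{\cP} + \dim(S_{\cP}/I_{\cP}) = \dim S_{\cP}$ holds without any caveats. Substituting the value $\dim K[\cP] = |V(\cP)| - \rank(\cP)$ from Theorem~\ref{dim} gives
\[
\height I_{\cP} = |V(\cP)| - \dim K[\cP] = |V(\cP)| - \bigl(|V(\cP)| - \rank(\cP)\bigr) = \rank(\cP),
\]
which is exactly the claim.

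The key steps, in order, are: first invoke $\dim S_{\cP} = |V(\cP)|$ since $S_{\cP} = K[x_v : v \in V(\cP)]$ is a polynomial ring in $|V(\cP)|$ variables; second, cite the standard fact (for instance from Bruns--Herzog, which the paper already references) that in a Cohen--Macaulay (or even just catenary and equidimensional) ring, and certainly in a polynomial ring modulo a prime ideal, one has $\height(I_{\cP}) + \dim(S_{\cP}/I_{\cP}) = \dim(S_{\cP})$; third, substitute the Krull dimension computed in Theorem~\ref{dim}; fourth, simplify. One should note that Theorem~\ref{dim} already established that $I_{\cP}$ is prime (via the toric representation of \cite{Trento}), which is what justifies using the dimension formula in its clean form.

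I do not expect any genuine obstacle here: this corollary is a one-line consequence of the theorem just proved, and the only thing to be careful about is stating the hypotheses under which the additivity-of-codimension formula applies, which are amply satisfied since $S_{\cP}$ is a polynomial ring (hence regular, hence Cohen--Macaulay and catenary) and $I_{\cP}$ is a prime ideal. If one wanted to avoid even citing the prime-ideal case, one could instead use that $K[\cP]$ is Cohen--Macaulay (also established in Theorem~\ref{dim}) together with the unmixedness of $\lt_<(I_{\cP})$, but this is unnecessary overkill for the statement at hand.
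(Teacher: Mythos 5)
Your proof is correct and matches the paper's argument: both deduce the corollary from Theorem~\ref{dim} via the identity $\height I_{\cP} = |V(\cP)| - \dim K[\cP]$ (the paper phrases $|V(\cP)|$ as $\embdim K[\cP]$, justified since $I_{\cP}$ is generated by quadrics). Your added care about why the codimension formula applies (primality of $I_{\cP}$, regularity of $S_{\cP}$) is a harmless elaboration of the same one-line computation.
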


\begin{proof}
    It follows from Theorem~\ref{dim} and \cite[Corollary 3.1.7]{Villareal}.
\end{proof}

\section{Shellability of the simplicial complex attached to a grid polyomino}\label{Section: Shellability}
    
Inspired by the work \cite{Navarra_Rizwan}, particularly by Remark 3.11, we are interested in investigating the shellability of the simplicial complexes attached to grid polyominoes. In this section, we will first provide a generalization of the concept of a step of a face, a notion originally introduced in \cite[Definition 3.3]{Navarra_Rizwan}. This generalization will serve as the foundation for understanding the shellability of these complexes. Based on the analysis of the structure of the steps and the specific shape of grid polyominoes, we will prove that the simplicial complexes associated with these polyominoes are shellable.

	\begin{defn}\rm\label{Definition: Generalized Step of a facet}
	 Let $\cP$ be a polyomino and $\Delta_{\cP}$ be the simplicial complex on $V(\cP)$ having $\lt_<(I_{\cP})$ as Stanley-Reisner ideal. A subset $F'$ of a face $F$ of $\Delta_{\cP}$ is a \textit{generalized step} in $F$ if:
\begin{enumerate}
\item $F'=\{(a,b),(c,b),(c,d)\}$ with $a<c$ and $b<d$;
\item for all $i\in \{a+1, \dots, c-1\}$ there is no $(i, b)$ in $F$; 
\item for all $j\in \{b+1, \dots, d-1\}$ there is no $(c, j)$ in $F$;
\item $[(a,b),(c,d)]$ is an inner interval of $\cP$.
\end{enumerate}
	We say that the vertex $(c,b)$ is the \textit{lower right corner} of $F'$.
	\end{defn}

Observe that if $F'$ is a generalized step in $F$ then $F'$ is a \textit{step} in $F$, according to \cite[Definition 3.3]{Navarra_Rizwan}.

	\begin{exa}\rm \label{ex_gen_step}
	Consider the polyomino displayed in Figure~\ref{Figure: Examples steps}. Observe that $\cP$ satisfies the condition from \cite[Remark 4.2]{Qureshi}, so the reduced Gröbner basis of $I_{\cP}$ with respect to $<$ is the set of the generators of $I_{\cP}$; in particular, the set $\{a_i:i\in [21]\}$ is a facet of the simplicial complex $\Delta_{\cP}$ associated to $\cP$. There are only three generalized steps in $F$: $\{a_{11},a_{10},a_7\}, \{a_{15},a_{14},a_{8}\}, \{a_{18},a_{17},a_{16}\}$. Note that $\{a_{19},a_{18},a_{2}\}$ is a step, but is not a generalized step since $[a_{19},a_{2}]$ is not an inner interval of $\cP$.
	
	\begin{figure}[h!]
		\centering
		\includegraphics[scale=0.8]{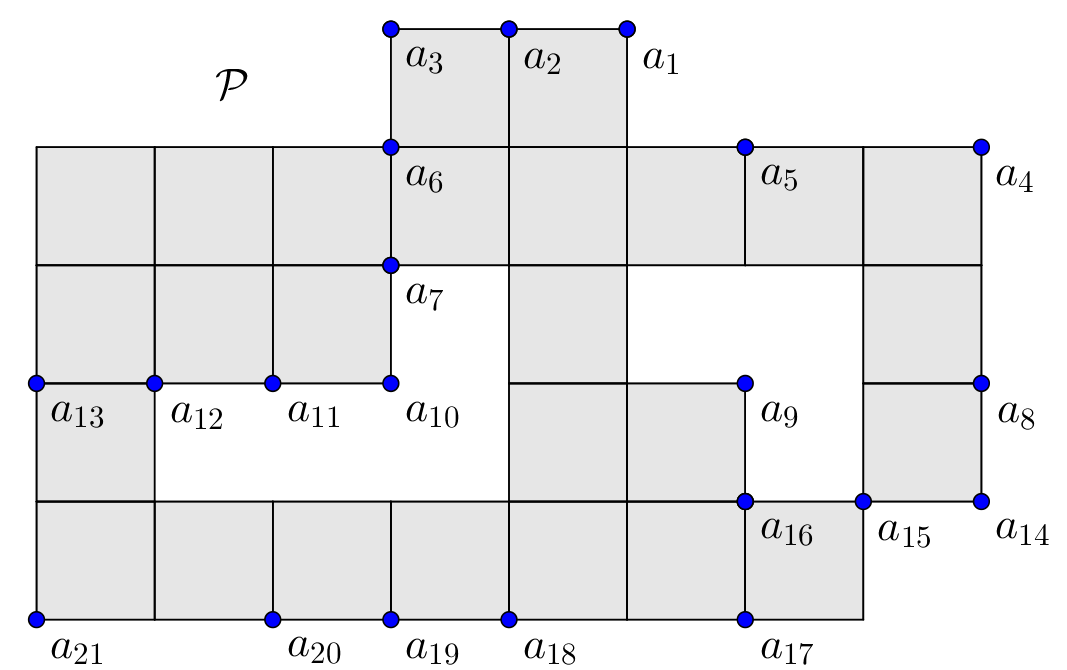}
		\caption{Example of (generalized) steps in a facet of $\Delta_{\cP}$.}
		\label{Figure: Examples steps}
	\end{figure}	

	\noindent Observe that $a_1, a_4$, $a_9$, $a_{15}$ and $a_{21}$ belong to any facet of $\Delta_{\cP}$ because they are never the anti-diagonal corner of an inner interval of $\cP$. For the same reason, if $\cP$ is a grid polyomino with $V(\cP) \subseteq [(1,1),(m,n)]$, then $(1,1)$ and $(m,n)$ belong to any facet of the simplicial complex attached to $\cP$.

\end{exa}

In the following three lemmas, we provide a complete description of the structure of a generalized step in a grid polyomino.
	 
	\begin{lemma}\label{Lemma: Structure generalized step}
	Let $\cP$ be a grid polyomino and $\Delta_{\cP}$ the attached simplicial complex. Let $F'=\{(a,j),(i,j),(i,b)\}$ be a generalized step of a facet $F$ of $\Delta_{\cP}$, where $(i,j)$ is the lower right corner of $F'$. Then the following are the only possibilities for the values of $a$ and $b$:
    \begin{itemize}
        \item $a=i-1$ and $b=j+1$;
        \item $a \in \{i-2, i-3\}$ and $b = j+1$;
        \item $a = i-1$ and $b \in \{j+2, j+3\}$.
    \end{itemize}     
	\end{lemma}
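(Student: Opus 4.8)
The plan is to analyze the generalized step $F'=\{(a,j),(i,j),(i,b)\}$ by combining the numerical constraints from the definition of a step with the fact that $[(a,j),(i,b)]$ must be an inner interval of $\cP$, and then using the explicit combinatorial structure of a grid polyomino (Definition~\ref{grid}). First I would record what the step conditions give directly: $a<i$, $j<b$, the vertex $(i,j)$ is the lower right corner of a cell of $\cP$, and there is no vertex of $F$ on the horizontal segment strictly between $(a,j)$ and $(i,j)$, nor on the vertical segment strictly between $(i,j)$ and $(i,b)$. The key point is that $F$ is a facet of $\Delta_{\cP}$, hence maximal: if a lattice point $p$ on one of these segments failed to be in $F$, it would have to be "blocked" by being an anti-diagonal corner of some inner interval with the other corner in $F$ — but the segment between two anti-diagonal corners in $F'$ lies inside the inner interval $[(a,j),(i,b)]$, so I would argue that the only way to avoid having those intermediate points in $F$ is for the segment to be short enough that no such blocking configuration inside $\cP$ is possible.

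The main work is then to bound $i-a$ and $b-j$. The idea is that since $[(a,j),(i,b)]$ is an inner interval, the whole rectangle it spans consists of cells of $\cP$, so $\cP$ contains a genuine $(i-a)\times(b-j)$ block of cells there. Because $\cP$ is \emph{thin}, it contains no square tetromino, which already forces that this block cannot be $2\times 2$ or larger in cells, i.e. $\min(i-a, b-j)\le 1$ at the level of cells — but here $[(a,j),(i,b)]$ is measured in lattice points, so I would be careful: thinness forbids the $2\times2$ cell block, meaning either $i-a\le 1$ or $b-j\le 1$ cannot both exceed $1$... actually the correct statement is that one of the two "widths" (in cells) is exactly $1$, giving either $b=j+1$ or $a=i-1$. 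This produces the dichotomy in the two cases of the lemma. For the remaining bound — that in case (1) we have $a\ge i-3$, and symmetrically in case (2) that $b\le j+3$ — I would use the grid structure: a long horizontal inner interval inside a grid polyomino whose left endpoint is an anti-diagonal corner of $F$ must terminate quickly because the holes $\cH_{ij}$ are arranged so that the maximal horizontal strips between consecutive rows of holes, together with the boundary frame of width governed by conditions (1) and (4) of Definition~\ref{grid}, have bounded length in the relevant direction; concretely, the vertices $(a,j),\dots,(i,j)$ all failing to be in the facet $F$ forces them to be blocked, and in a grid polyomino the blocking inner intervals available near such a strip constrain $i-a$ to be at most $3$ (the extra slack of $2$ beyond the naive bound $1$ coming from the one-cell-thick border strips of width contributing vertices that are never anti-diagonal corners, cf. the observation in Example~\ref{ex_gen_step} that $(1,1)$ and $(m,n)$, and more generally the frame vertices, always lie in every facet).

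Concretely the steps, in order, would be: (i) translate the step axioms into the inequalities $a<i$, $j<b$ and the "no intermediate vertex of $F$" conditions; (ii) invoke that $[(a,j),(i,b)]\in\cI(\cP)$ together with thinness to split into the two cases $b=j+1$ or $a=i-1$; (iii) in the case $b=j+1$, suppose for contradiction $a\le i-4$ and exhibit, using conditions (1)–(4) of Definition~\ref{grid} for the positions of the holes and their frames, an intermediate lattice point $(i',j)$ with $a<i'<i$ that is forced to lie in $F$ (because either it is never an anti-diagonal corner of an inner interval of $\cP$, or any inner interval making it one would be blocked), contradicting step condition (1)(b); (iv) deduce $a\in\{i-1,i-2,i-3\}$; (v) repeat the symmetric argument in the case $a=i-1$ to get $b\in\{j+1,j+2,j+3\}$; (vi) note the two cases are not mutually exclusive only in the overlap $a=i-1,b=j+1$, which is consistent with "only one of the following," read inclusively. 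I expect the genuine obstacle to be step (iii)/(v): carefully producing the blocked intermediate vertex requires a case analysis on where $(a,j)$ sits relative to the nearest hole $\cH_{k\ell}$ and its anti-diagonal corners $c_{k\ell}, d_{k\ell}$, and checking that the frame conditions $1<(a_{ij})_1<(b_{ij})_1<m$ etc. leave no room for an inner interval longer than $3$ lattice units that starts at an anti-diagonal corner of a facet. The subsequent Lemmas~\ref{Lemma: Structure generalized step - particular structure 1} and \ref{Lemma: Structure generalized step - particular structure 2} presumably pin down which of the sub-cases $a=i-2$, $a=i-3$ actually occur depending on finer combinatorics, so for the present lemma it suffices to establish the crude bounds.
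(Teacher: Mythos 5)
Your outline reproduces the architecture of the paper's proof: thinness of $\cP$ applied to the inner interval $[(a,j),(i,b)]$ rules out $a\le i-2$ and $b\ge j+2$ holding simultaneously, which yields the dichotomy $b=j+1$ or $a=i-1$; the remaining bound is then obtained by contradiction from the maximality of the facet $F$, by forcing some intermediate vertex of the row (resp.\ column) into $F$ against condition (1)(b) (resp.\ (1)(c)) of Definition~\ref{Definition: Generalized Step of a facet}. Steps (i), (ii), (iv)--(vi) are correct and are exactly what the paper does.

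The gap is in step (iii)/(v), which you rightly identify as the crux but for which the justification you sketch would not go through. You attribute the bound $i-a\le 3$ to the horizontal strips of a grid polyomino having ``bounded length'' because of the frame conditions in Definition~\ref{grid}; this is false: the blocks between consecutive holes, and the holes themselves, can be arbitrarily long, so there is no a priori bound on the rank of a horizontal block of $\cP$ whose left end is a vertex of $F$. The correct mechanism is different. By maximality of $F$, an intermediate vertex $(i',j)$ with $a<i'<i$ can be absent from $F$ only if it is an anti-diagonal corner of an inner interval of $\cP$ whose other anti-diagonal corner lies in $F$; because $F'$ is a step, the vertices on the row $y=j$ between $(a,j)$ and $(i,j)$ and those on the row $y=j+1$ left of $(i,j+1)$ are unavailable as partners, so every blocking interval must exit the one-cell-high strip through a cell of $\cP$ lying strictly above or strictly below it. The paper's proof is an exhaustive case analysis over which of the cells $A_h$ (above the strip) and $B_h$ (below it) belong to $\cP$ --- constrained by thinness and by the grid conditions --- showing that whenever the strip joining $(a,j)$ to $(i,j)$ has rank at least $4$, in every configuration one of $(i-1,j)$, $(i-2,j)$, $(i-3,j)$ admits no blocking interval and hence lies in $F$, a contradiction. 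Your proposal neither carries out this analysis nor identifies the counting of blockable intermediate vertices as the reason the threshold is $3$, so as written it does not yet prove the lemma.
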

    
	\begin{proof}
First, we observe that the assumption $a \leq i - 2$ and $b \geq j + 2$ simultaneously leads to a contradiction. Indeed, if $a \leq i - 2$ and $b \geq j + 2$, then the interval $[(a,j), (i,b)]$ contains a square tetromino, due to (2) of Definition \ref{Definition: Generalized Step of a facet}. However, this is impossible since $\cP$ is a grid polyomino, and therefore thin. Moreover, if either $a\geq i$ or $b\leq j$, then we have a contradiction with (1) of Definition \ref{Definition: Generalized Step of a facet}. Therefore, we can have either $a=i-1$ and $b=j+1$ or $a\leq i-2$ and $b=j+1$ or $a=i-1$ and $b\geq j+2$. \\
It follows directly from the definition of a generalized step that if $a = i - 1$ and $b = j + 1$, this does not lead to any contradiction, thus satisfying the conditions of the claim. Next, assume that $a \neq i - 1$ and $b = j + 1$. We aim to prove that, in this case, it must hold that $a \in\{ i - 2, i - 3\}$.  Now, suppose by contradiction that $a<i-3$. Keep in mind that no vertex in $\{(p, j) \in V(\cP): a < p < i\}$ is in $F$, due to be $F'$ a step of $F$. Referring to Figure \ref{Figure: step in grid polyominoes} (A), we define the following. Let $A$ and $B$ denote the cells with $(a, j)$ and $(i - 1, j)$ as their respective lower left corners. By (2) of Definition \ref{Definition: Generalized Step of a facet}, we have $[A, B] \subset \cP$. Furthermore, we define $\rank([A, B]) = k$, implying that $i - a = k$, and we observe that $k \geq 4$. Let $A_h$ be the cell in $\mathbb{Z}^2$ whose lower left corner is $(a - 1 + h, j + 1)$, and let $B_h$ be the cell whose lower left corner is $(a - 1 + h, j - 1)$, for all $h \in [k]$. Define $H_1$ and $H_2$ as the maximal edge intervals of $\cP$ containing, respectively, $(i, j + 1)$ and $(i, j)$ and set $H_1^{<i}=\{(p,j+1)\in H_1:p<i\}$ and $H_2^{<i}=\{(p,j)\in H_2:p<i\}$.
\begin{figure}[h!]
		\centering
		\subfloat[]{\includegraphics[scale=0.55]{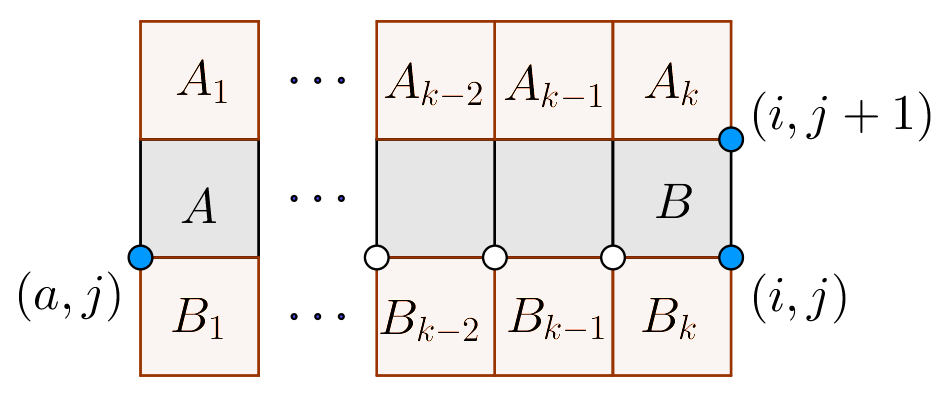}}
		\subfloat[]{\includegraphics[scale=0.55]{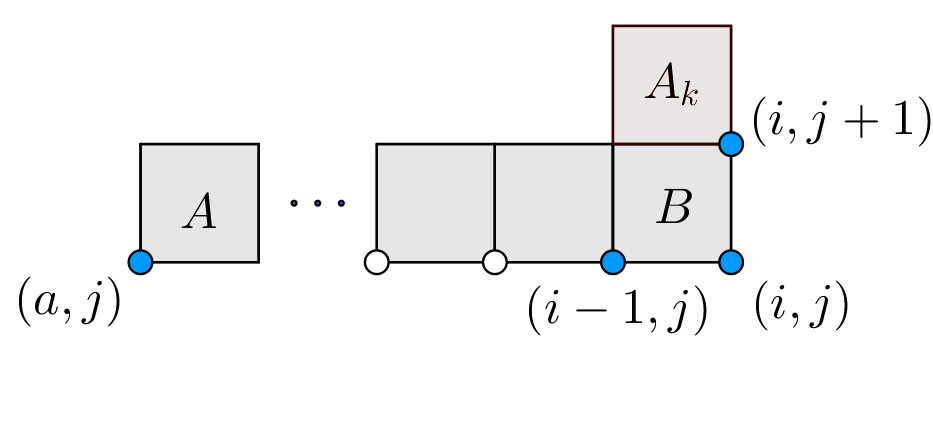}}
		\subfloat[]{\includegraphics[scale=0.55]{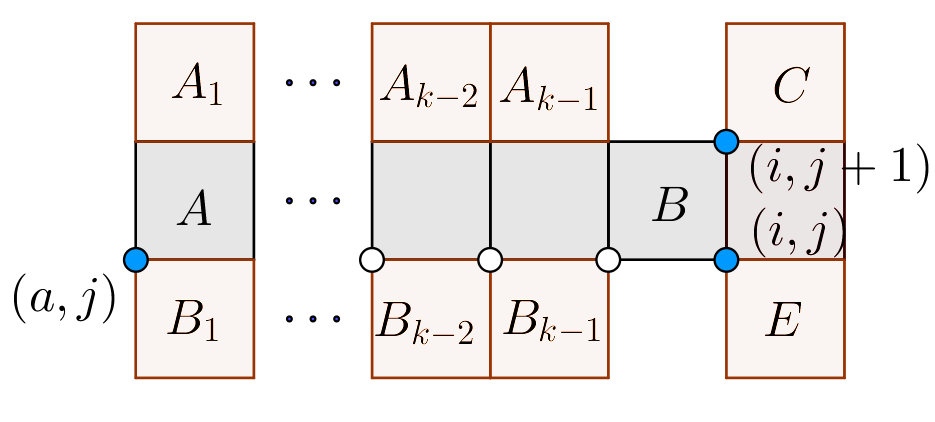}}
		\caption{Several arrangements of the cells of $\cP$.}
		\label{Figure: step in grid polyominoes}
	\end{figure}
    
Now, we examine all the possible cases depending on the cells of $[A,A_k]$ and $[B,B_k]$ which belong to $\cP$ and we demonstrate that each one leads to a contradiction.
    
    \begin{enumerate}
\item Assume that $A_k \in \cP$ and $B_k \notin \cP$. In this case, all cells of $\ZZ^2$ with lower left corners $(p, q)$, where $p \in \ZZ$ and $q < j$, do not belong to $\cP$; otherwise, $B_k \in \cP$, as $\cP$ is a grid polyomino. Moreover, $A_{k-1} \notin \cP$ trivially follows since $\cP$ is thin. Look now at Figure \ref{Figure: step in grid polyominoes} (B)). Since $(i, j) \in F$, every vertex in $H_1^{<i}$ does not belong to $F$. Hence, no inner interval of $\cP$ exists with $(i-1, j)$ as an anti-diagonal corner and another anti-diagonal corner in $F$. From the maximality of $F$, it follows that $(i-1, j) \in F$. This leads to a contradiction because no vertex in $\{(p, j) \in V(\cP): a < p < i\}$ is in $F$, as $F'$ is a step of $F$.
\item Suppose that $A_{k-1} \in \cP$ and $B_{k-1} \notin \cP$. In this case, $(i-2, j) \in F$ follows from arguments analogous to the previous case. Similarly, all other cases where $A_h \in \cP$ and $B_h \notin \cP$ for some $h \in [k]$ can be handled using the same reasoning, always leading to a contradiction.
\item Assume that $A_k \notin \cP$ and $B_k \in \cP$. In this case, we encounter a contradiction because $(i-2, j) \in F$. Similarly, if $A_{k-1} \notin \cP$ and $B_{k-1} \in \cP$, then $(i-1, j) \in F$, again resulting in a contradiction. By analogous reasoning, the same conclusion can be obtained for all other cases where $A_h \notin \cP$ and $B_h \in \cP$ for some $h \in [k]$.
\item If $A_k, B_{k-3} \in \cP$, then $(i-2, j) \in F$, which again leads to a contradiction.
	\item Suppose that $A_{k-1}, B_{k-3} \in \cP$. Then $B_{k-1}, A_{k-3} \in \cP$, since $\cP$ is a grid polyomino. Let $V_1$ and $V_2$ denote the maximal vertical edge intervals of $\cP$ containing, respectively, $(i-1, j)$ and $(i-2, j)$. Define $V_1^{<j} = \{(i-1, q) \in V_1 : q < j\}$ and $V_2^{>j} = \{(i-2, q) \in V_2 : q > j\}$. Here, we distinguish two sub-cases:
\begin{enumerate} \item If there exists an element of $F$ belonging to $V_1^{<j}$, then $(i-2, j) \notin F$. However, in this situation, it is evident that $(i-1, j) \in F$, as no element in $V_2^{>j} \cup H_1^{<i}$ belongs to $F$. This leads to a contradiction.
\item If no vertex in $V_1^{<j}$ belongs to $F$, then $(i-2, j) \in F$, which is a contradiction.
\end{enumerate}
\item Assume that $A_{k}, B_{k-2} \in \cP$, so $B_{k}, A_{k-2} \in \cP$. Let $V$ be the maximal edge interval of $\cP$ containing $(i, j)$ and $V^{<j} = \{(i, q) \in V : q < j\}$. We consider two sub-cases.
\begin{enumerate} \item If no element of $V^{<j}$ belongs to $F$, then obviously $(i-1, j) \in F$, a contradiction.
\item Suppose that there exists an element of $V^{<j}$ in $F$. Let $\tilde{V}$ be the maximal edge interval of $\cP$ containing $(i-2, j)$ and $\tilde{V}^{<j} = {(i, q) \in \tilde{V} : q < j}$. We need to examine two sub-cases related to $\tilde{V}^{<j} \cap F$. If there exists an element of $F$ in $\tilde{V}^{<j}$, then $(i-2, j) \in F$; instead, if $\tilde{V}^{<j} \cap F = \emptyset$, $(i-3, j) \in F$, a contradiction. Hence the case $A_{k}, B_{k-2} \in \cP$ cannot happen.
\end{enumerate}
\item The case $A_h, B_h \notin \cP$ for all $h \in [k]$ implies that $(i-1, j) \in F$, which is a contradiction.
\item The reader can easily verify that all other remained cases give contradictions similarly.
\end{enumerate}
Hence, all the cases depending on the cells of $[A, A_k]$ and $[B, B_k]$ belonging to $\cP$ provide a contradiction.
Let $C$ and $E$ be the cells in Figure \ref{Figure: step in grid polyominoes} (C). The reader can easily verify that the cases when either $(C \in \cP, E \notin \cP)$ or $(C \notin \cP, E \in \cP)$ or $(C \in \cP, E \in \cP)$ or $(C \notin \cP, E \notin \cP)$ imply that either $(i-1, j) \in F$ or $(i-2, j) \in F$, which is a contradiction. \\
Therefore, all the possible configurations of cells of $\cP$ lead to a contradiction, so necessarily $a \in \{i-1, i-2\}$. The last claim, that is, $a = i-1$ and $b \in \{j+2, j+3\}$, can be proved similarly. This concludes the proof of our lemma.
	\end{proof}
	
	\begin{lemma}\label{Lemma: Structure generalized step - particular structure 1}
		Let $\cP$ be a grid polyomino and $\Delta_{\cP}$ be the attached simplicial complex. Let $F'=\{(a,j),(i,j),(i,b)\}$ be a generalized step of a facet $F$ of $\Delta_{\cP}$, where $(i,j)$ is the lower right corner of $F'$. Then $a=i-3$ and $b=j+1$ if and only if the following hold:
		\begin{enumerate}
			\item the vertices of $F'$ are arranged in a sub-polyomino of $\cP$ as shown in Figure in Figure \ref{Figure: step in grid polyominoes if and only if - uno};
			\item if $V_1=\{(i,q)\in V(\cP):q<j\}$ and $V_2=\{(i-3,q)\in V(\cP):q>j+1\}$ then $V_1\cap F\neq \emptyset$ and $V_2\cap F\neq \emptyset$.
		\end{enumerate}
	\begin{figure}[h!]
		\centering
		\includegraphics[scale=0.6]{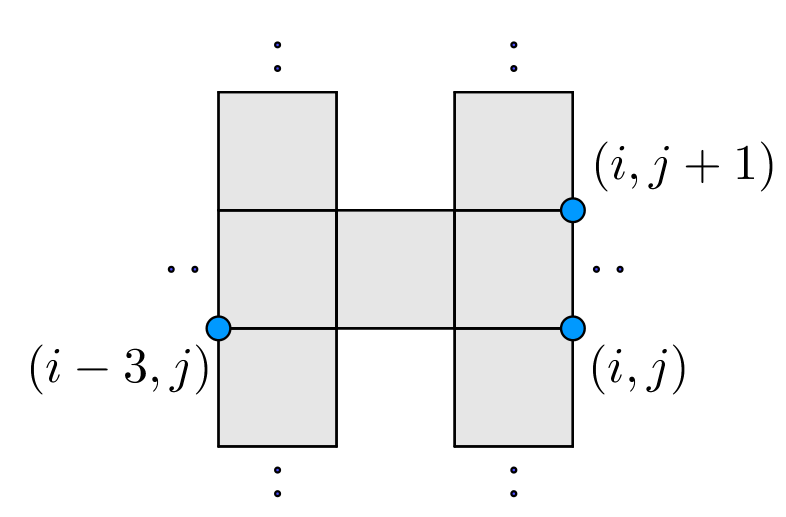}
		\caption{Sub-polyomino when $a=i-3$ and $b=j+1$.}
		\label{Figure: step in grid polyominoes if and only if - uno}
	\end{figure}
	\end{lemma}

	\begin{proof}
	$\Leftarrow)$ It is trivial.\\
	$\Rightarrow)$  We prove that if $a = i - 3$ and $b = j + 1$, then the conditions (1) and (2) hold. Let $A$ and $B$ be the cells whose lower left corners are $(i-3,j)$ and $(i-1,j)$, respectively. Obviously, $\rank([A,B]) = 3$. The cell of $\ZZ^2$ to the North of $A$ belongs to $\cP$, otherwise $(i-2,j) \in F$, which cannot be possible, since $F'$ is a step of $F$. Similarly, we observe that the cell to the South of $B$ belongs to $\cP$, otherwise $(i-1,j) \in F$. Consequently, since $\cP$ is a grid polyomino, the cell to the South of $A$ and to the North of $B$ both belong to $\cP$. Hence, we get claim (1). Now suppose by contradiction that no vertex in $V_1$ is in $F$. From this assumption, it follows that the lower right corner of any inner interval of $\cP$ having $(i-1,j)$ as the upper left corner is not in $F$. Moreover, since $(i,j) \in F$, the upper left corner of any inner interval of $\cP$ having $(i-1,j)$ as the lower right corner does not belong to $F$. Therefore, $(i-1,j) \in F$, which cannot be possible. Hence, $V_1 \cap F \neq \emptyset$. By similar arguments, we can prove that $V_2 \cap F \neq \emptyset$. In conclusion, we deduce claim (2).       
	\end{proof}

	\begin{lemma}\label{Lemma: Structure generalized step - particular structure 2}
		Let $\cP$ be a grid polyomino and $\Delta_{\cP}$ be the attached simplicial complex. Let $F'=\{(a,j),(i,j),(i,b)\}$ be a generalized step of a facet $F$ of $\Delta_{\cP}$, where $(i,j)$ is the lower right corner of $F'$. Then $a=i-1$ and $b=j+3$ if and only if the following hold:
		\begin{enumerate}
			\item the vertices of $F'$ are arranged in a sub-polyomino of $\cP$ like in Figure \ref{Figure: step in grid polyominoes if and only if - due};
			\item if $H_1=\{(p,j+3)\in V(\cP):p<i-1\}$ and $H_2=\{(p,j)\in V(\cP):p>i\}$, then $H_1\cap F\neq \emptyset$ and $H_2\cap F\neq \emptyset$.
		\end{enumerate}
		\begin{figure}[h!]
			\centering
			\includegraphics[scale=0.6]{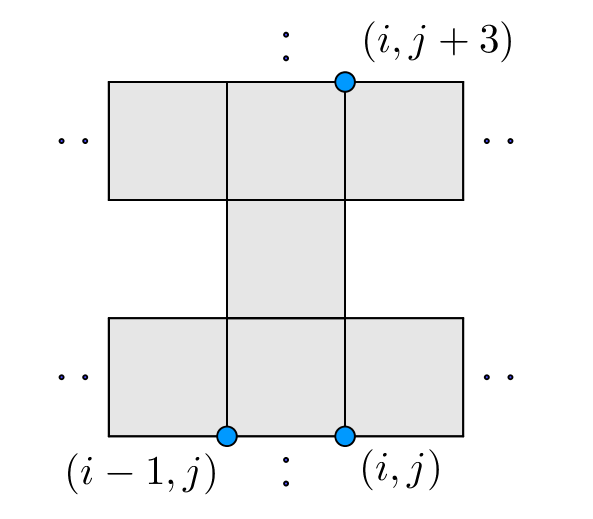}
			\caption{Sub-polyomino when $a=i-1$ and $b=j+3$.}
			\label{Figure: step in grid polyominoes if and only if - due}
		\end{figure}
	\end{lemma}
	
	\begin{proof}
	It can be proved following the same arguments as for Lemma~\ref{Lemma: Structure generalized step - particular structure 1}. 
	\end{proof}
	
    Let $\cP$ be a grid polyomino and we recall the lexicographic total order on the set of the facets of $\Delta_{\cP}$, given in \cite[Definition 3.10]{Navarra_Rizwan}. Let $a,b\in V(\cP)$ with $a=(i,j)$ and $b=(k,l)$, we say $b<_{V(\cP)}a$ if $j > l$, or, $j = l$ and $i > k$. Set the dimension of $\Delta_{\cP}$ by $d-1$. We define a total order on $\cF_{\cP}$. Let $F=\{a_1,\dots, a_d\}$ and $G=\{b_1,\dots, b_d\}$ be two facets of $\Delta$, where $a_{i+1}<_{V(\cP)}a_i$ and $b_{i+1}<_{V(\cP)}b_{i}$ for all $i=1,\dots,d-1$, and $k$ be the smallest integer in $[d]$ such that $b_k\neq a_k$. Then we define $F<_{\mathrm{lex}} G$ if the $a_k<_{V(\cP)} b_k$.  

 \begin{exa}\rm 
Let $\cP$ be the grid polyomino in Figure \ref{Figure: exa for order facet}. Consider the following two facets $F$ and $G$ of the simplicial complex attached to $\cP$:
\begin{itemize}
    \item[-] $F=\{(7,4),(5, 4),(3, 4),(5, 3),(3, 3),(2, 3),(1, 3),(7, 2),(6, 2),(5, 2),(4,2),(3,2),(3,1),(1,1)\}$;
    \item[-] $G=\{(7,4),(5, 4),(5, 3),(4,3), (3, 3),(2, 3),(7, 2),(4,2),(3,2),(7,1),(6,1),(3,1),(2,1),(1,1)\}$.
\end{itemize}
 Then $G<_{\mathrm{lex}} F$, because in $F$ and $G$ the first different vertices from left to right are in the 3-rd position and $(5,3)<_{V(\cP)}(3,4)$.
 
 \begin{figure}[h]
		\centering
		\subfloat[Facet $F$]{\includegraphics[scale=0.7]{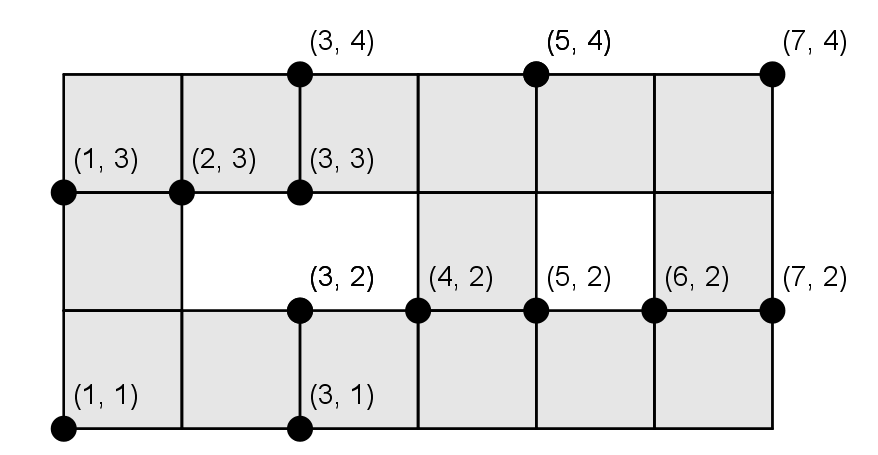}}
		\quad
		\subfloat[Facet $G$]{\includegraphics[scale=0.7]{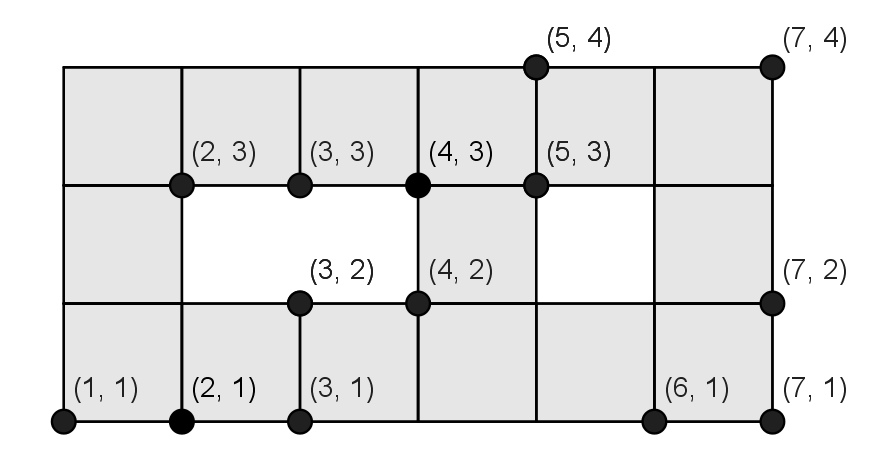}}
		\caption{}
		\label{Figure: exa for order facet}
	\end{figure} 
 \end{exa}
	
	 In the following result we prove that, if $\cP$ is a grid polyomino, then the set of the facets ordered in descending with respect to $<_{\mathrm{lex}}$ forms a shelling order for $\Delta_{\cP}$, as proven for frame polyominoes in \cite{Navarra_Rizwan}.
		
	\begin{thm}\label{Thm: The lexicographic order gives a shelling order}
	Let $\cP$ be a grid polyomino. Let $\Delta_{\cP}$ be the simplicial complex attached to $\cP$ and $\cF_{\cP}=\{F_0, F_1,\dots,F_s\}$ be the set of facets of $\Delta_{\cP}$, where $F_0$ is the facet $ F_{I} \cup \left(\bigcup_{i\in[r]\atop j\in[s]} B_{ij}\right)$ defined in the proof of Theorem $\ref{dim}$ and $F_{i+1}<_{\mathrm{lex}} F_i$ for all $i=0,\dots, s-1$. Then for all $j\in [s]$: $$\langle F_0,\dots,F_{j-1} \rangle \cap \langle F_j\rangle =\langle \big\{F_j\setminus\{v\} : v\ \text{is the lower right corner of a generalized step of}\ F_j\big\}\rangle,$$
    In particular, $(\cF_{\cP}$, $<_{\mathrm{lex}})$ provides a shelling for $\Delta_{\cP}$. 
	\end{thm}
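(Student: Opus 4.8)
The plan is to establish (1) first and then deduce (2). Granting (1), for each $j\in[s]$ the intersection $\langle F_0,\dots,F_{j-1}\rangle\cap\langle F_j\rangle$ is presented as the complex generated by the faces $F_j\setminus\{v\}$, with $v$ running over the lower right corners of generalized steps of $F_j$; each such $F_j\setminus\{v\}$ is a maximal proper face of the facet $F_j$, and this collection is nonempty for $j\ge 1$ because $F_j$ is not $<_{\mathrm{lex}}$-maximal (the nonemptiness will drop out of the argument for the inclusion $\subseteq$). By the definition of a shelling recalled in Section~\ref{polyominoideals}, this is exactly what makes $F_0,F_1,\dots,F_s$ a shelling order of the pure complex $\Delta_{\cP}$, which is (2). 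Throughout we use that the facets of $\Delta_{\cP}$ are precisely the inclusion-maximal subsets of $V(\cP)$ containing no two anti-diagonal corners of a common inner interval of $\cP$, and that $u<_{V(\cP)}u'$ means $u$ has a strictly smaller second coordinate, or the same second coordinate and a strictly smaller first coordinate. (That the distinguished facet $F_{\cI}\cup\bigcup_{i,j}B_{ij}$ from Theorem~\ref{dim} is the $<_{\mathrm{lex}}$-largest facet is a direct check and plays no further role.)

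\emph{The inclusion $\supseteq$.} Let $F'=\{(a,j),(i,j),(i,b)\}$ be a generalized step of $F_j$ with lower right corner $v=(i,j)$, so $a<i$, $j<b$, and by Definition~\ref{Definition: Generalized Step of a facet}(2) the interval $[(a,j),(i,b)]$ is inner, with anti-diagonal corners $v=(i,j)$ and $v^{\ast}:=(a,b)$; since $v\in F_j$, necessarily $v^{\ast}\notin F_j$. Put $G:=(F_j\setminus\{v\})\cup\{v^{\ast}\}$. The first step is to check that $G$ is again a facet: that $v^{\ast}$ is not the anti-diagonal partner, in some inner interval of $\cP$, of any vertex of $F_j\setminus\{v\}$, and that $G$ admits no further vertex. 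This uses the "no intermediate vertex of $F_j$" clauses in the definition of a step together with the descriptions of the local shape of $\cP$ around $F'$ furnished by Lemmas~\ref{Lemma: Structure generalized step}, \ref{Lemma: Structure generalized step - particular structure 1}, and \ref{Lemma: Structure generalized step - particular structure 2}. Since $b>j$ we have $v^{\ast}>_{V(\cP)}v$, and $G$ and $F_j$ contain exactly the same vertices that are $>_{V(\cP)}v^{\ast}$, while $v^{\ast}\in G\setminus F_j$; comparing the descending rearrangements of the two facets term by term gives $G>_{\mathrm{lex}}F_j$, hence $G=F_t$ for some $t<j$. Then $F_j\setminus\{v\}=G\setminus\{v^{\ast}\}\subseteq F_t$, so $F_j\setminus\{v\}$, and with it every face it contains, lies in $\langle F_0,\dots,F_{j-1}\rangle\cap\langle F_j\rangle$.

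\emph{The inclusion $\subseteq$.} Since $\langle F_i\rangle\cap\langle F_j\rangle=\langle F_i\cap F_j\rangle$ for facets, it suffices to show: for every $i<j$ there is a generalized step of $F_j$ whose lower right corner $v$ lies in $F_j\setminus F_i$, for then $F_i\cap F_j\subseteq F_j\setminus\{v\}$. Write $F_i=\{b_1,\dots,b_d\}$ and $F_j=\{a_1,\dots,a_d\}$ in decreasing $<_{V(\cP)}$-order and let $k$ be the first index with $a_k\neq b_k$. From $F_i>_{\mathrm{lex}}F_j$ we get $b_k>_{V(\cP)}a_k$; comparing the two lists, $b_k\notin F_j$ and every vertex of $F_j$ that is $>_{V(\cP)}b_k$ is among $a_1=b_1,\dots,a_{k-1}=b_{k-1}$, hence lies in $F_i$. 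As $F_j$ is maximal, $F_j\cup\{b_k\}$ contains a non-face, so some $w\in F_j$ is, together with $b_k$, a pair of anti-diagonal corners of an inner interval $J$ of $\cP$; then $w\notin F_i$ (since $b_k\in F_i$), and $w<_{V(\cP)}b_k$ forces $b_k$ to be the upper left and $w$ the lower right anti-diagonal corner of $J$. The task is then to convert the configuration $(J,b_k,w)$ into an honest generalized step of $F_j$ with lower right corner in $F_j\setminus F_i$: one follows the maximal horizontal and vertical edge intervals of $\cP$ emanating from $w$, and, using the thinness of $\cP$, Definition~\ref{grid}, and the structural Lemmas~\ref{Lemma: Structure generalized step}--\ref{Lemma: Structure generalized step - particular structure 2}, one pins down the correct lower right corner $v$ (essentially the first vertex along the lower/left boundary of $J$, or of an inner interval abutting $J$, at which the step conditions close up) and shows that the lex-comparison producing the index $k$ also forces $v\notin F_i$.

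This last conversion is the main obstacle: it requires a case analysis according to how the holes of the grid polyomino sit around $J$, and these cases are precisely the ones already catalogued in Lemmas~\ref{Lemma: Structure generalized step}--\ref{Lemma: Structure generalized step - particular structure 2}. Once both inclusions are in hand, (1) follows; applying the $\subseteq$-argument with $i=0$ shows that each $F_j$ with $j\ge 1$ has at least one generalized step, so the generating sets in (1) are nonempty and, by the shelling criterion, (2) holds.
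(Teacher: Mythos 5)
Your overall strategy coincides with the paper's: order the facets lexicographically starting from $F_0=F_{\cI}\cup\bigcup_{i,j}B_{ij}$, identify $\langle F_0,\dots,F_{j-1}\rangle\cap\langle F_j\rangle$ with the complex generated by the codimension-one faces $F_j\setminus\{v\}$ for $v$ the lower right corner of a generalized step, and read (2) off from (1). Your $\supseteq$ argument is the paper's: replace the lower right corner $v=(i,k)$ of a generalized step $\{(a,k),(i,k),(i,b)\}$ by the opposite anti-diagonal corner $(a,b)$ of the inner interval guaranteed by Definition~\ref{Definition: Generalized Step of a facet}, check that the result is again a facet, and observe that it is lexicographically earlier. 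That half is fine, modulo the facet verification, which you, like the paper, delegate to Lemmas~\ref{Lemma: Structure generalized step}--\ref{Lemma: Structure generalized step - particular structure 2}.

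The genuine gap is in $\subseteq$. Your reduction --- for every $h<j$ find a generalized step of $F_j$ whose lower right corner lies in $F_j\setminus F_h$ --- is correct and clean, but the passage from the pair $(b_k,w)$ to such a step \emph{is} the content of the theorem in this direction, and you leave it entirely unexecuted; you even flag it as ``the main obstacle.'' Your chosen starting point also makes the remaining work harder than necessary: $w$ need not itself be the lower right corner of a generalized step of $F_j$, and after relocating to a nearby candidate $v$ you would still have to prove $v\notin F_h$, for which you offer no mechanism. The paper proceeds differently here: it reduces to the case where $F_h$ and $F_j$ differ in a single vertex, so $F_h=(F_j\setminus\{(i,k)\})\cup\{w\}$ with $w>_{V(\cP)}(i,k)$, and then argues that if $(i,k)$ were \emph{not} the lower right corner of a generalized step, every position where $w$ could legally be placed would already be compatible with all of $F_j$, so $F_j\cup\{w\}$ would be a face, contradicting the maximality of $F_j$ (this is carried out in detail for the representative non-generalized step $\{(i-1,k),(i,k),(i,k+2)\}$ and asserted for the remaining configurations). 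Some version of this maximality contradiction, or an explicit construction of the required step together with the verification that its corner avoids $F_h$, is what your write-up is missing; without it neither (1) nor (2) is established.
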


	\begin{proof}
	The proof follows the same strategy of \cite[(1) of Theorem 3.12]{Navarra_Rizwan}. Let $j\in [s]$. Denote by $\cC_j$ the simplicial complex generated by all $F_j\setminus\{v\}$, where $v$ is the lower right corner of a generalized step of $F_j$. We need to prove that $\langle F_0,\dots,F_{j-1} \rangle \cap \langle F_j\rangle= \cC_j$.\\
    
	$\supseteq$ Let $\{(a,k),(i,k),(i,b)\}$ be a generalized step of $F_j$ where $(i,k)$ is the lower right corner. From condition (2) of Definition~\ref{Definition: Generalized Step of a facet}, we know that $[(a,k),(i,b)]$ is an inner interval of $\cP$. Assume firstly that $[(a,k),(i,b)]$ is a cell $C$ of $\cP$. 
    Observe that no vertex of $\cP$ in $\mathcal{A}=\{(i,q):q<k\}$ and in $\cB=\{(p,k):p>i\}$ belongs to $F_j$, since $(i-1,k),(i,k+1)\in F_j$. Moreover, $(i-1,k+1)\notin F_j$ because $(i,k)\in F_j$. Consider $H=F_j\setminus\{(i,k)\}\cup \{(i-1,k+1)\}$. Recall that $\Delta_{\cP}$ is pure, that is all facets in $\Delta_{\cP}$ have the same cardinality.  Observe that $\vert H\vert=\vert F_j\vert$ and there does not exist any inner interval of $\cP$ having $(i-1,k+1)$ and a vertex $w\in F\setminus\{(i,k)\}$ as anti-diagonal corners, since no element of $\mathcal{A}\cup \cB$ is in $F_j$. Moreover, since we have defined $H$ by removing $(i,k)$ from $F_j$ and adding $(i-1,k+1)$ to it, we have that $F_j<_{\mathrm{lex}} H$, so there exists $h\in \{0,\dots,j-1 \}$ such that $H=F_h$. Observe that $F_j\setminus \{(i,k)\}\subset F_j$ trivially and $F_j\setminus \{(i,k)\} \subset H= F_h \in \langle F_0,\dots,F_{j-1} \rangle$. Hence, $F_j\setminus \{(i,k)\}\in \langle F_0,\dots,F_{j-1} \rangle\cap \langle F_j\rangle$. If $[(a,k),(i,b)]$ is not a cell, then due to be $\cP$ a grid polyomino, $[(a,k),(i,b)]$ is a vertical or a horizontal interval of $\cP$. Then, arguing as done in the previous case, by considering the facet $F_j\setminus \{(i,k)\}\cup \{(a,b)\}$ as $H$, we get that $F_j\setminus \{(i,k)\}\in \langle F_0,\dots,F_{j-1} \rangle\cap \langle F_j\rangle$. Therefore, we have that $\cC_j\subseteq \langle F_0,\dots,F_{j-1} \rangle\cap \langle F_j\rangle$. \\

	 $\subseteq$ For the other inclusion, let us consider $G \in \langle F_0, \dots, F_{j-1} \rangle \cap \langle F_j \rangle$. Since $G \in \langle F_j \rangle$, we have $G \subset F_j$. Assume that $G = F_j \setminus {(i, k)}$. We will prove that $(i, k)$ is the lower right corner of a generalized step of $F_j$. Once this case is understood, it becomes evident that when $|G| < |F| - 1$, i.e., $G = F \setminus \{v_1, v_2, \dots, v_t\}$, it is easy to show that there exists $q \in [t]$ such that $v_q$ is the lower right corner of a generalized step of $F_j$.\\
     Let us starting with the observation that $F_j\setminus \{(i,k)\}\in\langle F_0,\dots,F_{j-1} \rangle$ implies that there exists $h\in \{0,\dots, F_{j-1}\}$ such that $F_j\setminus \{(i,k)\} \subset F_h$. Hence, $F_h$ consists of all points of $F_j$ except $\{(i,k)\}$ and of another vertex $w=(p,q)\notin F_j$, with either $q>k$ or $q=k$ and $p>i$; that means we can obtain $F_h$ by replacing $(i,k)$ with $w$ in $F_j$, and vice versa. Imagine that $F_j\setminus \{(i,k)\}$ is figured out in $V(\cP)$, and we wonder where we can place $w$ to obtain $F_h$. Using the notation in Definition \ref{grid}, for $\alpha\in[r]$ and $\beta\in[s]$, $U_{\alpha\beta}=[a_{\alpha\beta}, c_{\alpha\beta}] \cup [c_{\alpha\beta}, b_{\alpha\beta}]$ and $B_{\alpha\beta}=[a_{\alpha\beta}+(1,0), d_{\alpha\beta}] \cup [d_{\alpha\beta}, b_{\alpha\beta}-(0,1)]$, let us observe that if $(i,k)\in\left(\bigcup_{\alpha\in [r]\atop \beta\in[s]} B_{\alpha\beta}\right)\cup [(1,1),(1,n)]\cup[(1,n),(m,n)]$, then $F_h$ cannot exist due to the maximality of $F_j$. Therefore, $(i,k)\in\left(\bigcup_{\alpha\in [r]\atop \beta\in[s]} U_{\alpha\beta}\right)\cup [(2,1),(m,1)]\cup[(m,1),(m,n-1)]$. For any position of $(i,k)$, the reader can easily verify that the operation to get $F_h$ from $F_j$ replacing $(i,k)$ by the vertex $w$ can be done just when $(i,k)$ is the lower right corner of a generalized step of $F_j$, so that we have $F_h=F_j\setminus \{(i,k)\}\cup \{(a,b)\}$. Hence, $G\in C_j$.\\

     Therefore we have that $\langle F_0,\dots,F_{j-1} \rangle \cap \langle F_j\rangle= \cC_j$. Finally, we can conclude that $(\cF_{\cP}$, $<_{\mathrm{lex}})$ provides a shelling for $\Delta_{\cP}$.  
	\end{proof}

\begin{qst}\rm \label{Question 1}
    As a first step toward proving \cite[Conjecture 4.5]{Trento3} or \cite[Conjecture 4.9]{Navarra_Rizwan}, it would be useful to characterize the class of polyominoes satisfying \cite[Remark 4.2]{Qureshi} for which the set of facets of the attached simplicial complex, ordered with respect to $<$, forms a shelling for $\Delta_{\cP}$.
\end{qst}

    \section{Hilbert series and rook polynomial of a grid polyomino}\label{Sec: Rook}
    
    This section is dedicated to studying the $h$-polynomial of the coordinate ring of a grid polyomino in relation to its rook polynomial. To begin, we first recall some fundamental concepts related to the Hilbert-Poincar\'{e} series of a graded $K$-algebra. \\
     Let $R$ be a graded $K$-algebra, and let $I$ be a homogeneous ideal of $R$. The quotient $R/I$ naturally inherits a graded $K$-algebra structure as $ \bigoplus_{k \in \mathbb{N}} (R/I)_k $. The formal series $ \mathrm{HP}_{R/I}(t) = \sum_{k \in \mathbb{N}} \dim_K (R/I)_k t^k $ is defined as the Hilbert-Poincar\'{e} series of $R/I$. By the Hilbert-Serre theorem, there exists a unique polynomial $ h(t) \in \mathbb{Z}[t] $, called the $h$-polynomial of $R/I$, such that $ h(1) \neq 0 $ and $ \mathrm{HP}_{R/I}(t) = \frac{h(t)}{(1-t)^d} $, where $d$ is the Krull dimension of $R/I$. Moreover, if $R/I$ is Cohen-Macaulay, then $ \mathrm{reg}(R/I) = \deg h(t) $. 
     
     Next, we introduce some definitions related to the rook polynomial of a polyomino $\mathcal{P}$. Two rooks in $\mathcal{P}$ are said to be in \textit{non-attacking position} in $\cP$ if they do not occupy the same row or column in $\mathcal{P}$. A $k$-rook configuration in $\mathcal{P}$ is an arrangement of $k$ rooks in non-attacking positions. For instance, see Figure \ref{Figura:esempio rook configuration}. The rook number, denoted by $ r(\mathcal{P})$, is the maximum number of rooks that can be placed in $\mathcal{P}$ in non-attacking positions. We denote by $ \mathcal{R}(\mathcal{P},k)$ the set of all $k$-rook configurations in $\mathcal{P}$ and set $ r_k = \vert \mathcal{R}(\mathcal{P},k) \vert $, for all $ k \in \{0, \dots, r(\mathcal{P})\} $ (with the convention $ r_0 = 1 $). The \textit{rook polynomial} of $\mathcal{P}$ is the polynomial in $\mathbb{Z}[t]$ defined as $ r_{\mathcal{P}}(t) = \sum_{k=0}^{r(\mathcal{P})} r_k t^k $. Finally, the reader may consult \cite{Navarra_Rizwan, Parallelogram Hilbert series} for more details about a nice generalization of the rook polynomial to the case of non-thin polyominoes, as well as the recent papers \cite{NQR1, NQR2}, which appeared during the review process.
    
    \begin{figure}[h]
		\centering
		\includegraphics[scale=0.6]{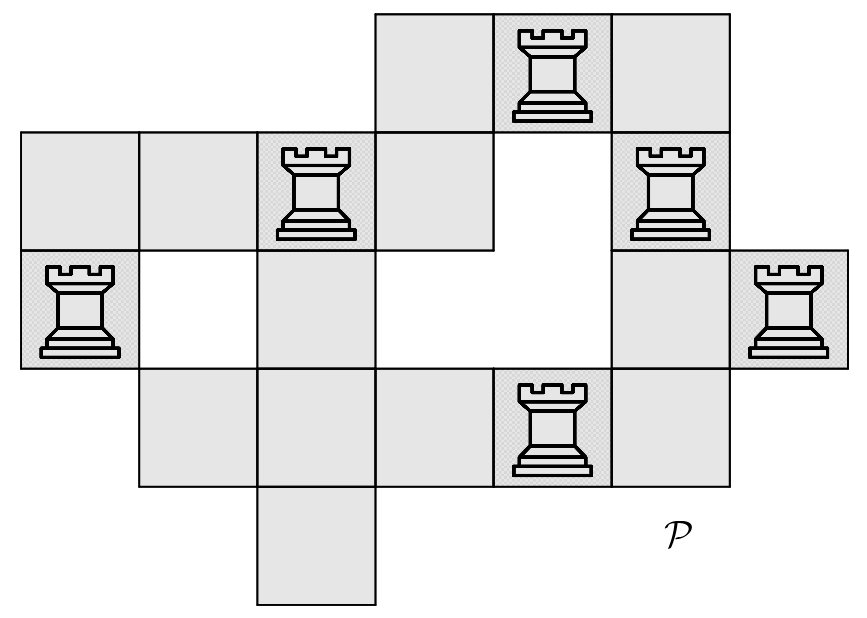}
		\caption{An example of a $6$-rook configuration in $\cP$.}
		\label{Figura:esempio rook configuration}
	\end{figure}

The key to proving that the $h$-polynomial of the coordinate ring of a grid polyomino equals its rook polynomial lies in relating the facets with $k$ generalized steps of $\Delta_{\cP}$ to the $k$-rook configurations of $\cP$. Even if Question \ref{Question 1} holds, it represents only an initial step; proving \cite[Conjecture 4.5]{Trento3}, or more in general \cite[Conjecture 4.9]{Navarra_Rizwan}, requires significantly more intricate arguments. One of the main challenges is to establish a unique correspondence between these facets and the $k$-rook configurations, which is the novel contribution of this work. For frame polyominoes, this was linked to the maximal chains of parallelogram polyominoes, as discussed in Remark 4.5 and the ``Surjectivity of $\psi$" part of Theorem 4.6 in \cite{Navarra_Rizwan}. In contrast, for grid polyominoes, we introduce a new method of description, which is the focus of this section.

Let us start pointing out the only configurations where a change of direction can occur in a grid polyomino.

	\begin{lemma}\label{Lemma: A is a cell of a change of direction}
		Let $\cP$ be a grid polyomino and $\Delta_{\cP}$ be the attached simplicial complex. Let $F'=\{(a,j),(i,j),(i,b)\}$ be a generalized step of a facet $F$ of $\Delta_{\cP}$, where $(i,j)$ is the lower right corner of $F'$. Denote by $A$ the cell with lower right corner $(i,j)$. If $a\neq i-1$ and $b\neq j+1$, then $A$ is a cell of a change of direction in $\cP$.
	\end{lemma}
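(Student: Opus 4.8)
The plan is to take the hypothesis at face value and show that it is never actually realized, so that the asserted implication holds vacuously. Since $(i,j)$ is the lower right corner of the generalized step $F'=\{(a,j),(i,j),(i,b)\}$, Definition~\ref{Definition: Generalized Step of a facet} gives $a<i$ and $j<b$; as we work with integer points, this means $a\le i-1$ and $b\ge j+1$. Hence the two assumptions $a\neq i-1$ and $b\neq j+1$ strengthen to the pair of simultaneous inequalities $a\le i-2$ and $b\ge j+2$. The whole point is that these two inequalities cannot coexist for a generalized step of a grid polyomino, so the only cell $A$ for which the hypothesis could be tested does not exist.

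To see this I would invoke the inner-interval condition (2) of Definition~\ref{Definition: Generalized Step of a facet}, which asserts that $[(a,j),(i,b)]$ is an inner interval of $\cP$; by the definition of an inner interval, every cell of $\cP_{[(a,j),(i,b)]}$ lies in $\cP$. Now $a\le i-2$ forces this cell interval to contain at least two columns of cells and $b\ge j+2$ forces it to contain at least two rows of cells, so it contains the $2\times 2$ block whose cells have lower left corners $(i-2,j),(i-1,j),(i-2,j+1),(i-1,j+1)$. All four of these cells then belong to $\cP$, i.e. $\cP$ contains a square tetromino, contradicting the thinness of the grid polyomino $\cP$, exactly as in the opening observation of the proof of Lemma~\ref{Lemma: Structure generalized step}.

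It follows that no generalized step of a facet of $\Delta_{\cP}$ satisfies both $a\neq i-1$ and $b\neq j+1$; equivalently, every generalized step has $a=i-1$ or $b=j+1$. Consequently the set of generalized steps to which the hypothesis applies is empty, and the implication ``$a\neq i-1$ and $b\neq j+1$ $\Rightarrow$ $A$ is a cell of a change of direction in $\cP$'' is vacuously true, which is exactly the assertion. I would also record the contrapositive reading, since that is the form in which the statement is useful downstream: a generalized step whose corner cell $A$ is \emph{not} forced to sit at a bend must already be one of the admissible short configurations with $a=i-1$ or $b=j+1$.

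The one place requiring care is conceptual rather than computational: instead of attempting to exhibit a change of direction at $A$ directly, one must recognise that the combination $a\le i-2$ and $b\ge j+2$ is already forbidden by thinness through condition (2) of the generalized-step definition. Any attempt to realise the hypothesis places the cells to the left of, above, and diagonally above $A$ all inside $\cP$, which is a $2\times 2$ block and not a bend at all. Once this is noticed the argument is immediate and uses nothing beyond the square-tetromino obstruction already exploited in Lemma~\ref{Lemma: Structure generalized step}.
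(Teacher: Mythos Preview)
Your deduction is sound for the statement exactly as printed: with the conjunction $a\neq i-1$ \emph{and} $b\neq j+1$, the dichotomy of Lemma~\ref{Lemma: Structure generalized step} is violated in both branches, so the hypothesis is unattainable and the implication vacuous. As a proof of the literal sentence, there is nothing to object to.

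The difficulty is that the ``and'' in the hypothesis is a slip for ``or''. Two pieces of evidence make this clear. First, the paper's own proof opens by invoking Lemma~\ref{Lemma: Structure generalized step} to reduce to the cases $a\in\{i-2,i-3\},\,b=j+1$ or $a=i-1,\,b\in\{j+2,j+3\}$, and then carries out a genuine geometric argument in the representative case $a=i-2,\,b=j+1$; none of this would be needed if the hypothesis were empty. Second, the only place the lemma is cited is Definition~\ref{Defn: rook configuration to a facet}~(2), under the assumption $F'\neq\{(i-1,j),(i,j),(i,j+1)\}$, i.e. $a\neq i-1$ \emph{or} $b\neq j+1$. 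Under that intended disjunctive hypothesis the lemma is not vacuous, and the paper proves it by contradiction: if $A$ were not at a change of direction, then $A$ would sit inside a straight block with no adjacent bend, and the maximality of the facet $F$ would force $(i-1,j)$ (respectively $(i,j+1)$ in the vertical cases) into $F$, contradicting condition~(1) of Definition~\ref{Definition: Generalized Step of a facet}.

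So you have correctly spotted a typo and given a valid proof of the misprinted statement, but you have not established the version that is actually proved in the paper and used downstream. To match the paper you would need to treat the non-trivial cases $(a,b)\in\{(i-2,j+1),(i-3,j+1),(i-1,j+2),(i-1,j+3)\}$ and supply the ``$A$ lies at a bend'' argument; your square-tetromino observation alone does not reach those cases.
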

	
	\begin{proof}
		By Lemma \ref{Lemma: Structure generalized step}, we have that either $a\in \{i-2,i-3\}$ and $b=j+1$ or $a=i-1$ and $b\in \{j+2,j+3\}$. We may assume that $a=i-2$ and $b=j+1$ because all other cases can be proved similarly. Suppose by contradiction that $A$ is not a cell of a change of direction in $\cP$. Then $A$ belongs to a cell interval of $\cP$ as shown in Figure \ref{Figure: A is a cell of a change direction}. In such a case, we must have that $(i-1,j)\in F$ due to the maximality of $F$, but this is contradiction, since $F'$ is a generalized step of $F$. Hence, we get the desired result.
		
		\begin{figure}[h!]
			\centering
			\includegraphics[scale=0.55]{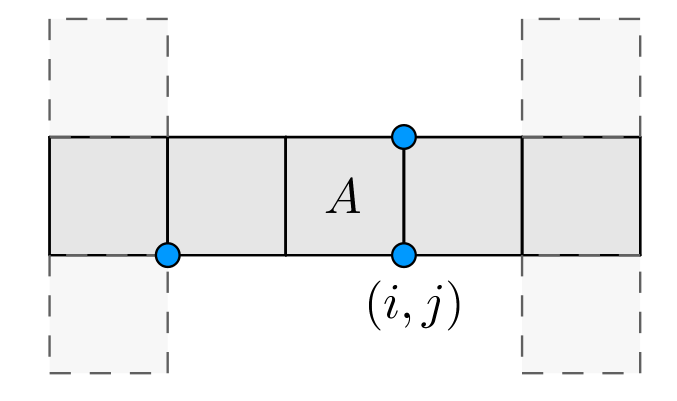}
			\caption{The dotted cells could belong to $\cP$.}
			\label{Figure: A is a cell of a change direction}
		\end{figure} 
	\end{proof}

	 In what follows we define a configuration of $k$ rooks in non-attacking position in $\cP$ to a facet of $\Delta_{\cP}$ having $k$ generalized steps.
	
	\begin{defn}\rm \label{Defn: rook configuration to a facet}
	Let $\Delta_{\cP}$ be the simplicial complex attached to $\cP$ and $F$ be a facet of $\Delta_{\cP}$ with $k$ generalized steps, where $k\geq 1$. We want to attach a $k$-rook configuration of $\cP$ to $F$. For that, we define a rook placement for every generalized step of $F$. Let $F'$ be a generalized step of $F$ whose lower right corner is $(i,j)$. We distinguish several cases depending on the structure of $F'$ by Lemmas \ref{Lemma: Structure generalized step}, \ref{Lemma: Structure generalized step - particular structure 1} and \ref{Lemma: Structure generalized step - particular structure 2}.
	\begin{enumerate}
		\item If $F'=\{(i-1,j),(i,j),(i,j+1)\}$, then we place a rook in the cell having $(i,j)$ as lower right corner. 
		\item Suppose that $F'\neq\{(i-1,j),(i,j),(i,j+1)\}$. Then the cell $C$ having $(i,j)$ as lower right corner is a cell of a change of direction in $\cP$ from Lemma \ref{Lemma: A is a cell of a change of direction}.
		\begin{enumerate}
			\item Assume $F'=\{(i-2,j),(i,j),(i,j+1)\}$. If $C$ is an extremal cell of the change of direction then we place a rook in $C$. Otherwise, if $C$ is the middle cell then a rook is placed in the cell with $(i-1,j)$ as lower right corner (see Figure \ref*{Figure: rook place i-2} (A) and (B)).
			\item If $F'=\{(i-3,j),(i,j),(i,j+1)\}$, then we have just the case described in Lemma \ref{Lemma: Structure generalized step - particular structure 1}, so we place a rook in the cell with $(i-1,j)$ as lower right corner (see Figure \ref*{Figure: rook place i-2} (C)).
			
				\begin{figure}[h!]
				\centering
				\subfloat[]{\includegraphics[scale=0.6]{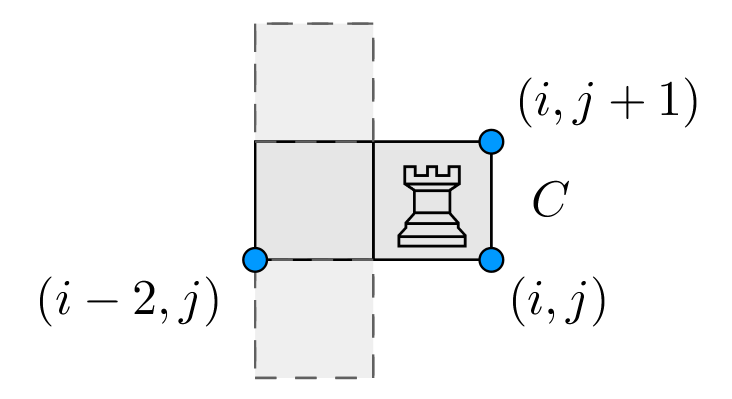}}\
				\subfloat[]{\includegraphics[scale=0.6]{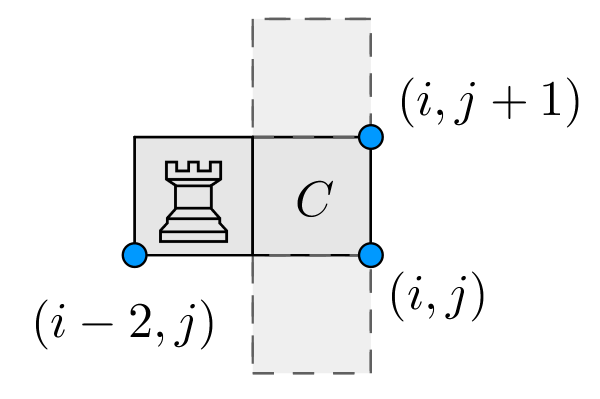}}\
				\subfloat[]{\includegraphics[scale=0.6]{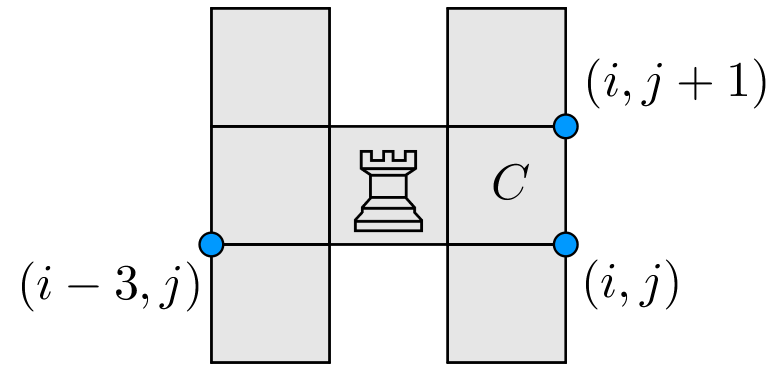}}
				\caption{Rook placements for the cases (a) and (b).}
				\label{Figure: rook place i-2}
			\end{figure} 
		
			\item Assume $F'=\{(i-1,j),(i,j),(i,j+2)\}$. As done in case (a), if $C$ is an extremal cell of the change of direction then we place a rook in $C$, otherwise in the cell with $(i,j+1)$ as lower right corner (see Figure \ref{Figure: rook place j+2} (A) and (B)). 
		\item If $F'=\{(i-1,j),(i,j),(i,j+3)\}$, then we have just the case described in Lemma \ref{Lemma: Structure generalized step - particular structure 2}, so we place a rook in the cell with $(i,j+1)$ as lower right corner (see Figure \ref{Figure: rook place j+2} (C)).
			
	\begin{figure}[h!]
		\centering
				\subfloat[]{\includegraphics[scale=0.6]{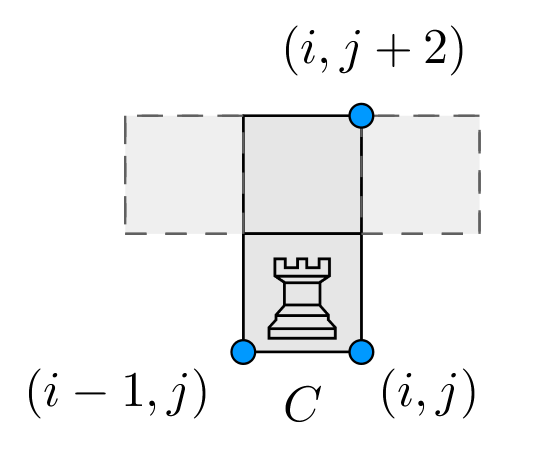}}\quad
				\subfloat[]{\includegraphics[scale=0.6]{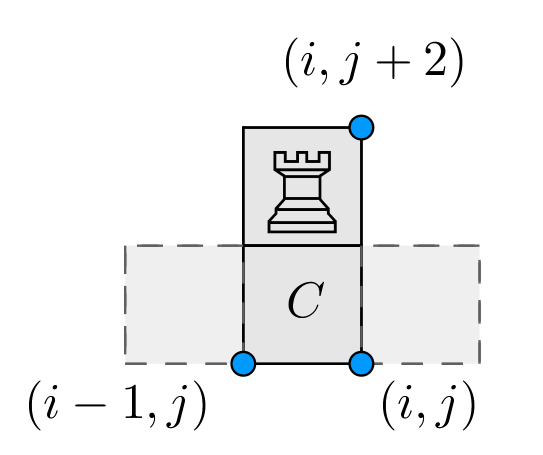}}\quad
				\subfloat[]{\includegraphics[scale=0.6]{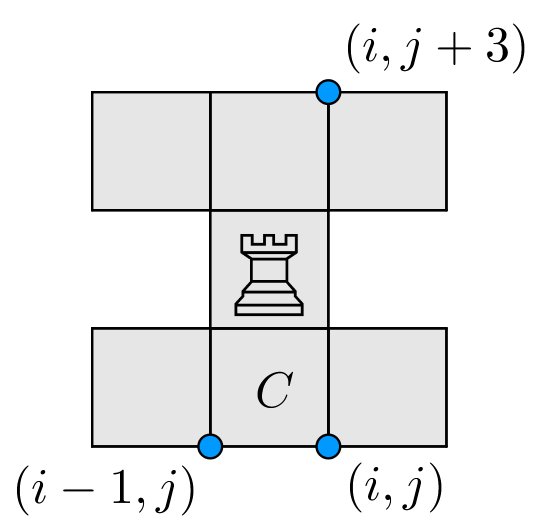}}
				\caption{Rook placements for the cases (c) and (d).}
				\label{Figure: rook place j+2}
			\end{figure} 
		\end{enumerate}
	\end{enumerate}
	We provide Table \ref{Table3}, which illustrates the steps and the corresponding rook placements at the changes of direction in a grid polyomino.
    
Finally, the reader can easily verify that if $F$ is a facet of $\Delta_{\cP}$ with $k$ generalized steps ($k \geq 1$) and $\cR(F)$ is the associated configuration of $k$ rooks defined as before, then the rooks in $\cR(F)$ are in a non-attacking position. Therefore, it is natural to define a function $\cR(-)$ that assigns to a facet $F$ of $\Delta_{\cP}$ with $k$ generalized steps a configuration $\cR(F)$ of $k$ rooks in non-attacking position in $\cP$, as defined previously (with the convention that $\cR(F_0)=\emptyset$, where $F_0$ is the facet $ F_{I} \cup \left(\bigcup_{i\in[r]\atop j\in[s]} B_{ij}\right)$ defined in the proof of Theorem $\ref{dim}$).    
	\end{defn}

 	\begin{table}
		\centering
		\renewcommand\arraystretch{1.3}{	
			\begin{tabular}{l|p{2.2cm}|p{2.2cm}|p{2.2cm}|p{2.2cm}|p{2.2cm}|p{2.2cm}}
			\hline & \centering A &\centering B &\centering C &\centering D &\centering E &\centerline	F\\
				 \hline
				I & \begin{minipage}{0.23\textwidth}
					\includegraphics[scale=0.41]{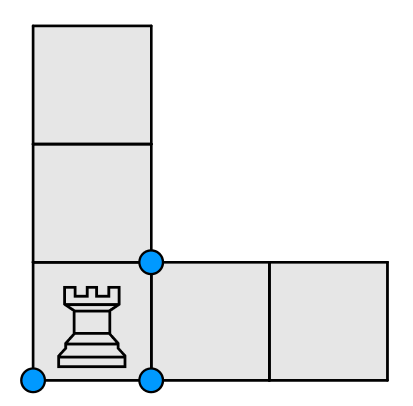}
				\end{minipage}  &  \begin{minipage}{0.23\textwidth}
					\includegraphics[scale=0.41]{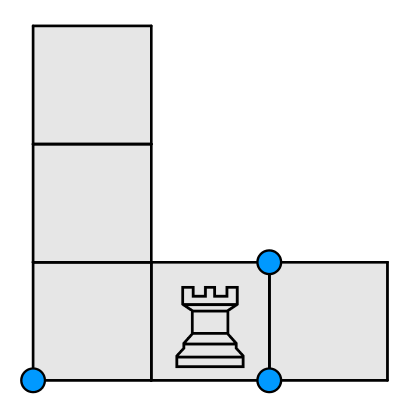}
				\end{minipage}  &  \begin{minipage}{0.23\textwidth}
				\includegraphics[scale=0.41]{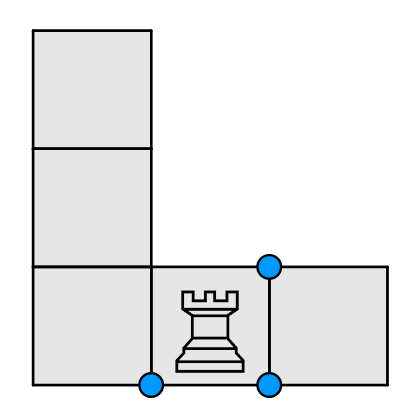}
				\end{minipage}&  \begin{minipage}{0.23\textwidth}
				\includegraphics[scale=0.41]{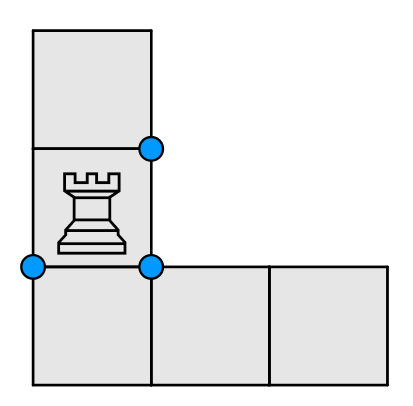}
				\end{minipage}&  \begin{minipage}{0.23\textwidth}
				\includegraphics[scale=0.41]{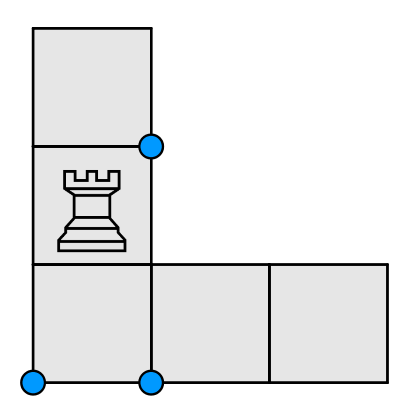}
				\end{minipage}\\
				\hline
				
				II & \begin{minipage}{0.23\textwidth}
					\includegraphics[scale=0.41]{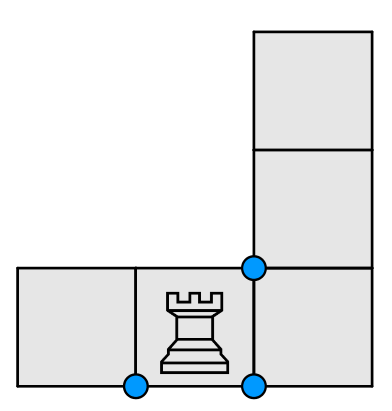}
				\end{minipage}  &  \begin{minipage}{0.23\textwidth}
					\includegraphics[scale=0.41]{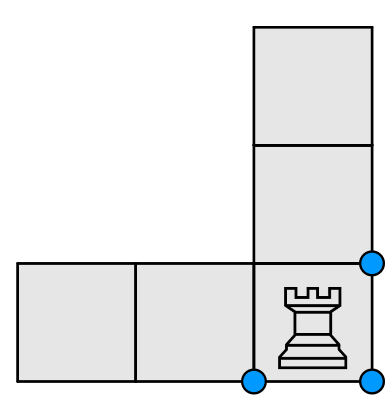}
				\end{minipage} &  \begin{minipage}{0.23\textwidth}
				\includegraphics[scale=0.41]{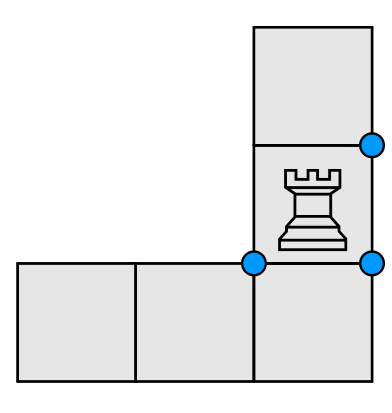}
				\end{minipage} &  \begin{minipage}{0.23\textwidth}
				\includegraphics[scale=0.41]{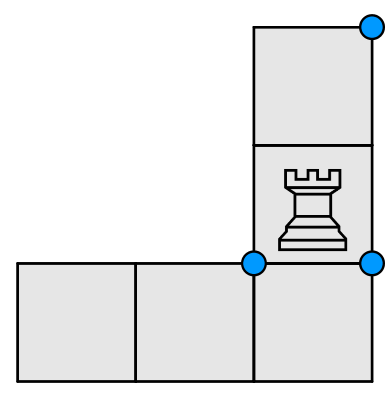}
			\end{minipage}&  \begin{minipage}{0.23\textwidth}
			\includegraphics[scale=0.41]{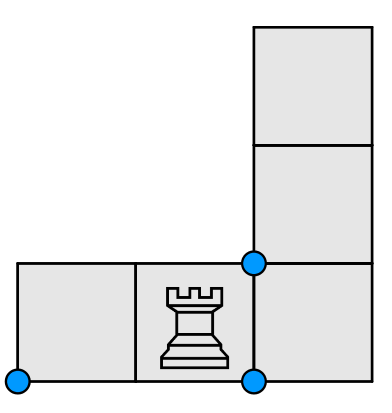}
			\end{minipage}\\
				\hline
				
				III & \begin{minipage}{0.23\textwidth}
					\includegraphics[scale=0.41]{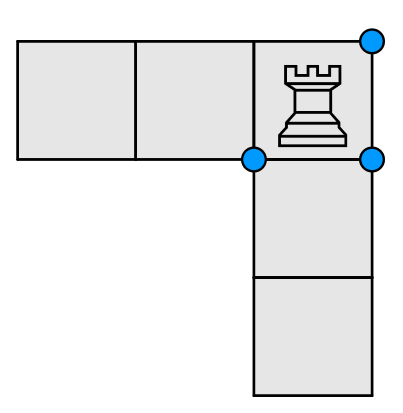}
				\end{minipage}  &  \begin{minipage}{0.23\textwidth}
					\includegraphics[scale=0.41]{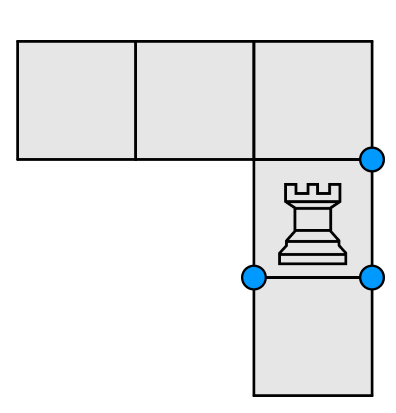}
				\end{minipage} &  \begin{minipage}{0.23\textwidth}
					\includegraphics[scale=0.41]{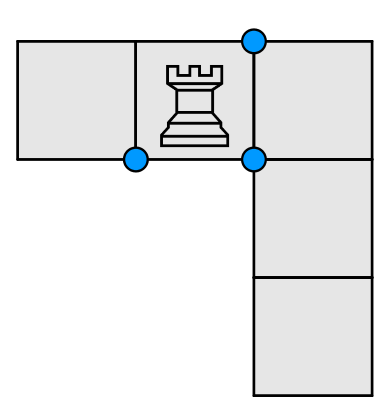}
				\end{minipage} &  \begin{minipage}{0.23\textwidth}
				\includegraphics[scale=0.41]{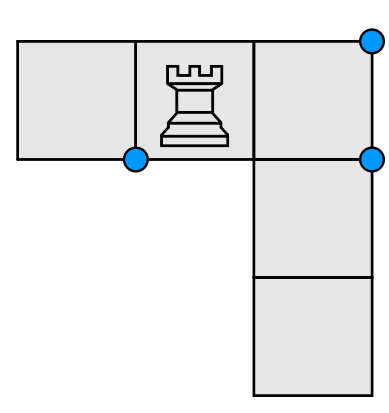}
				\end{minipage} &  \begin{minipage}{0.23\textwidth}
				\includegraphics[scale=0.41]{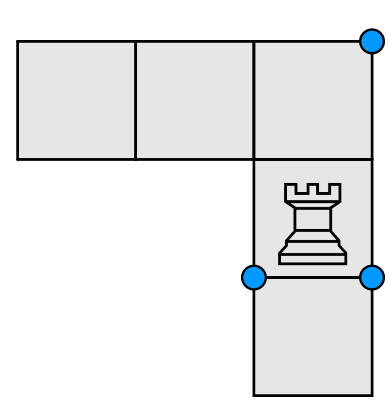}
				\end{minipage}\\
				\hline
				
				IV & \begin{minipage}{0.23\textwidth}
					\includegraphics[scale=0.41]{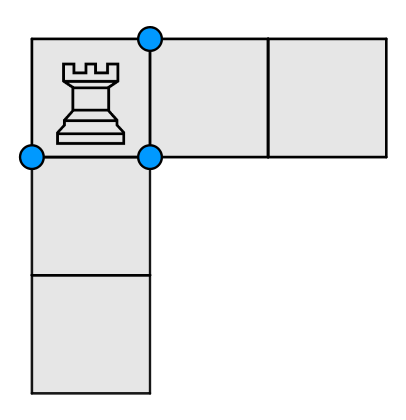}
				\end{minipage}  &  \begin{minipage}{0.23\textwidth}
					\includegraphics[scale=0.41]{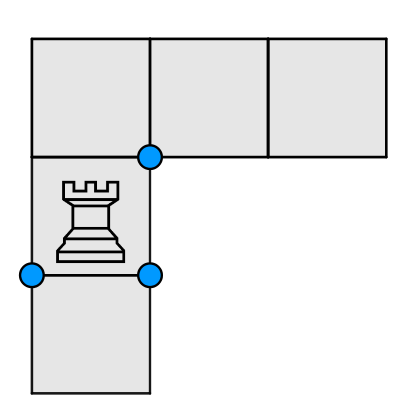}
				\end{minipage} &  \begin{minipage}{0.23\textwidth}
					\includegraphics[scale=0.41]{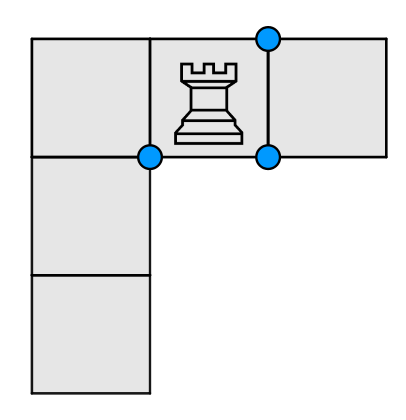}
				\end{minipage} \\
			\hline

				V & \begin{minipage}{0.23\textwidth}
					\includegraphics[scale=0.41]{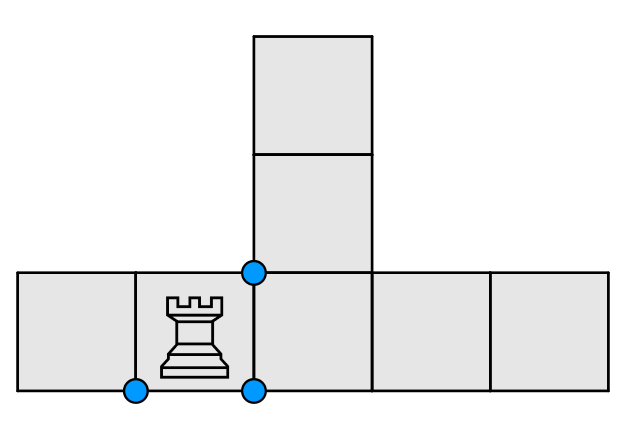}
				\end{minipage} & \begin{minipage}{0.23\textwidth}
				\includegraphics[scale=0.41]{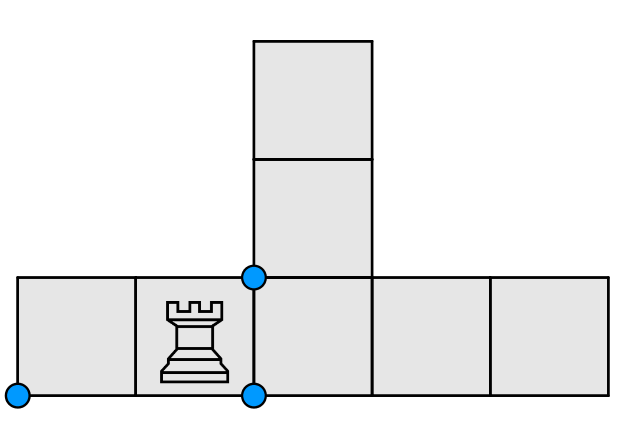}
				\end{minipage}  &  \begin{minipage}{0.23\textwidth}
					\includegraphics[scale=0.41]{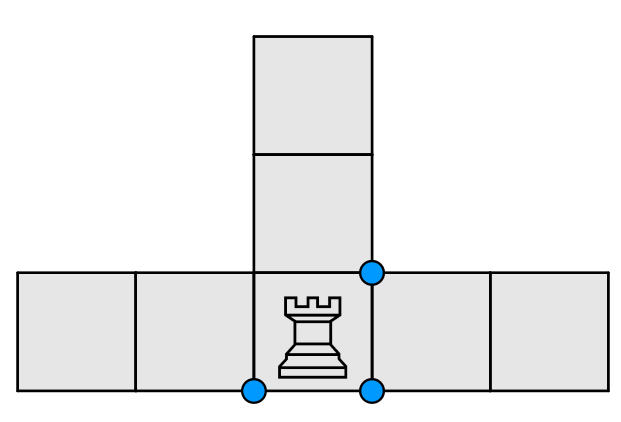}
				\end{minipage}  &  \begin{minipage}{0.23\textwidth}
					\includegraphics[scale=0.41]{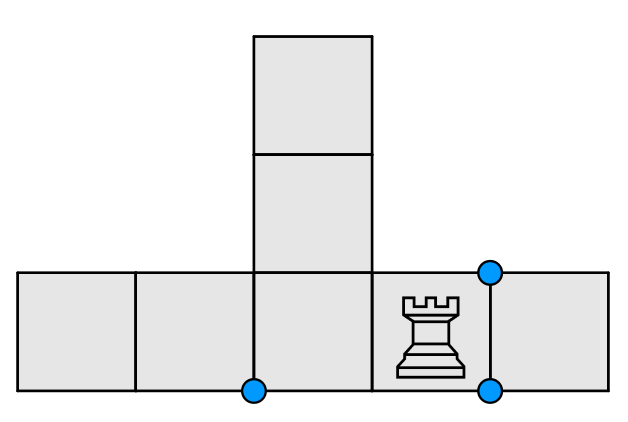}
				\end{minipage}&  \begin{minipage}{0.23\textwidth}
					\includegraphics[scale=0.41]{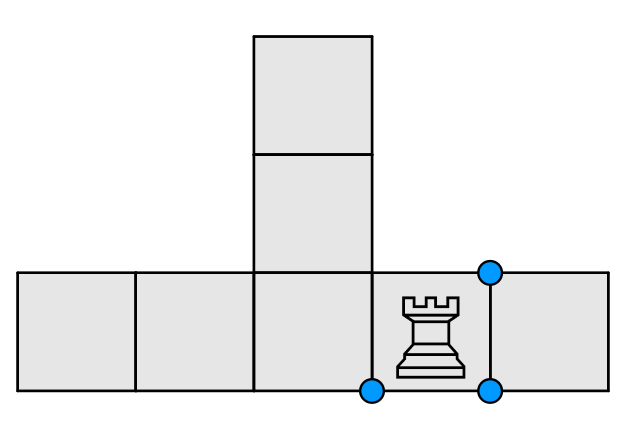}
				\end{minipage}\\
				\hline
				
					V$'$ &  \begin{minipage}{0.23\textwidth}
					\includegraphics[scale=0.41]{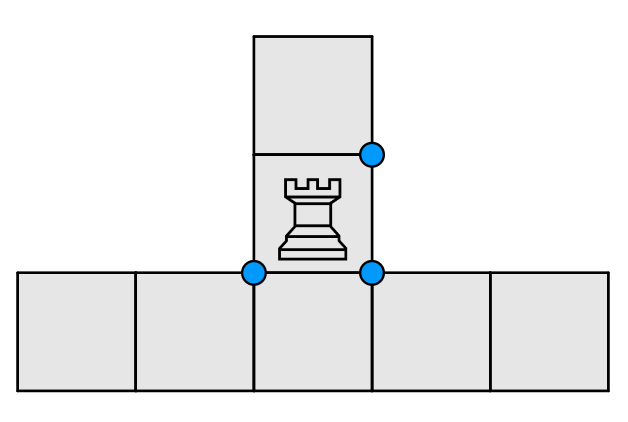}
				\end{minipage}&  \begin{minipage}{0.23\textwidth}
					\includegraphics[scale=0.41]{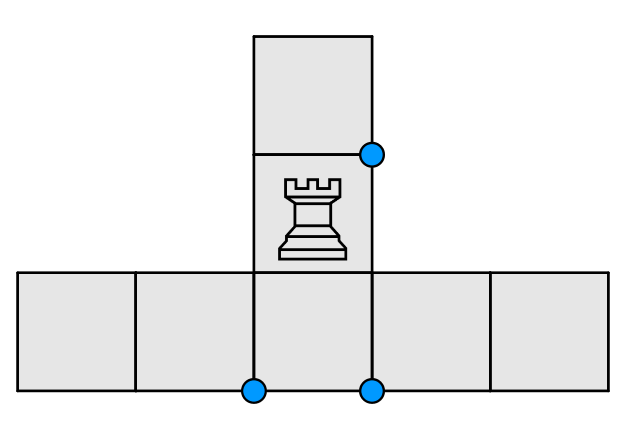}
				\end{minipage}&  \begin{minipage}{0.23\textwidth}
					\includegraphics[scale=0.41]{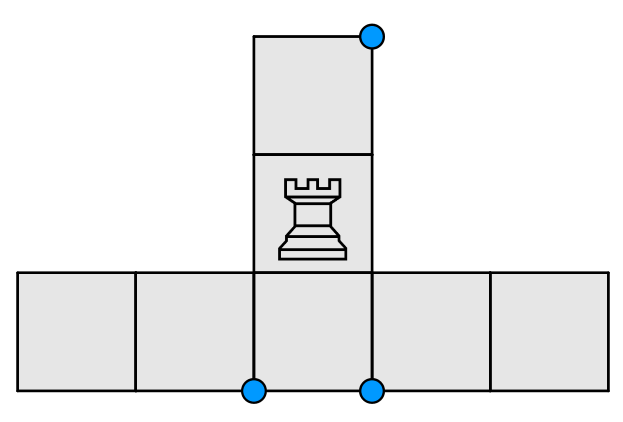}
				\end{minipage}&  \begin{minipage}{0.23\textwidth}
				\includegraphics[scale=0.41]{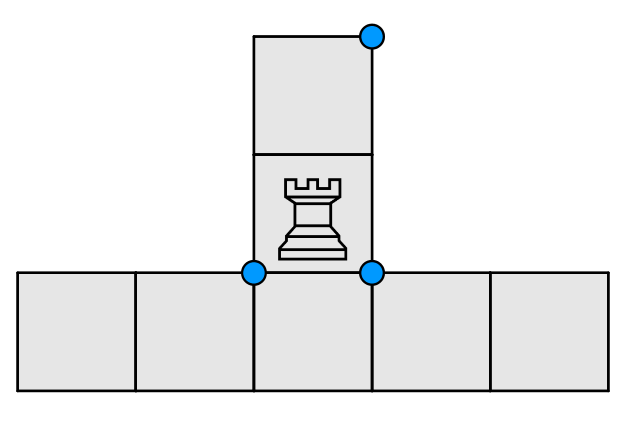}
			\end{minipage}\\
				\hline
				
				VI & \begin{minipage}{0.23\textwidth}
				\includegraphics[scale=0.41]{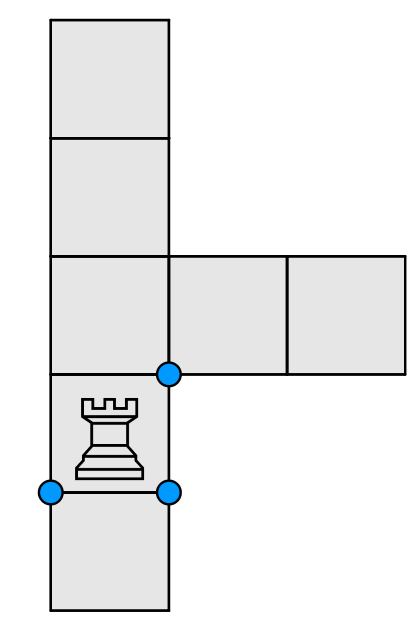}
				\end{minipage}  &  \begin{minipage}{0.23\textwidth}
				\includegraphics[scale=0.41]{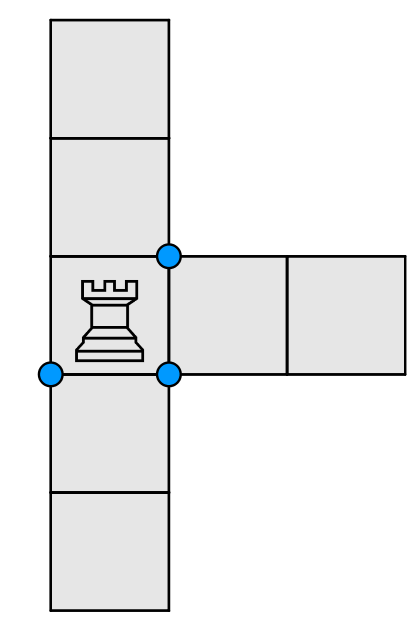}
				\end{minipage} &  \begin{minipage}{0.23\textwidth}
				\includegraphics[scale=0.41]{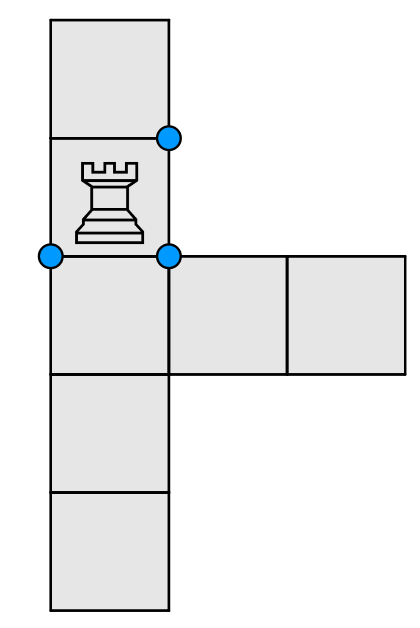}
				\end{minipage}	&  \begin{minipage}{0.23\textwidth}
				\includegraphics[scale=0.41]{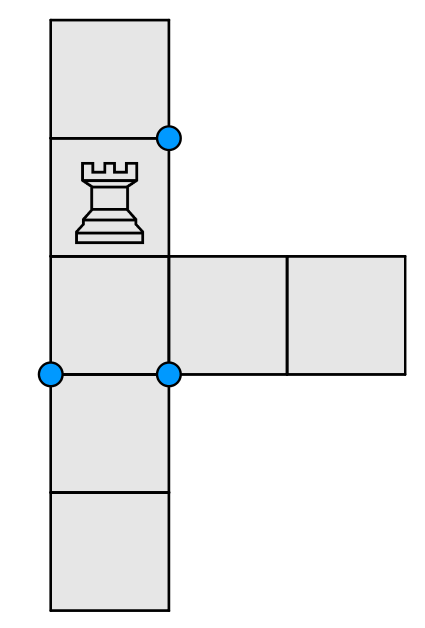}
				\end{minipage} & \begin{minipage}{0.23\textwidth}
				\includegraphics[scale=0.41]{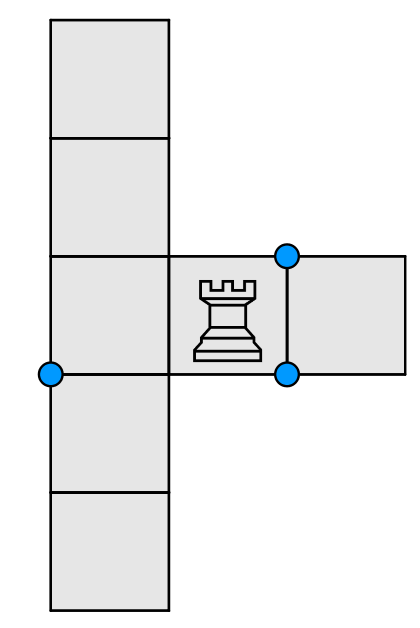}
				\end{minipage}  &  \begin{minipage}{0.23\textwidth}
				\includegraphics[scale=0.41]{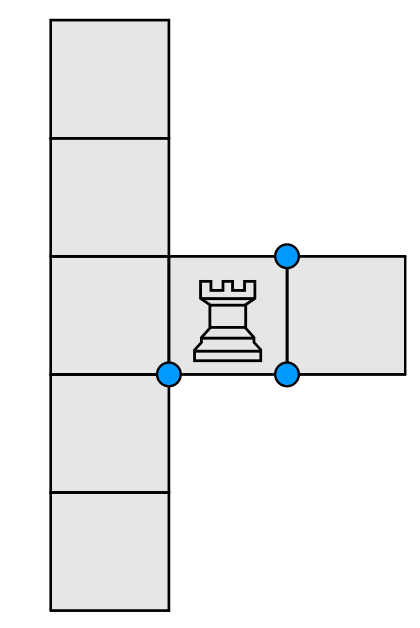}
				\end{minipage}  \\
				\hline
				
				VII & \begin{minipage}{0.23\textwidth}
					\includegraphics[scale=0.39]{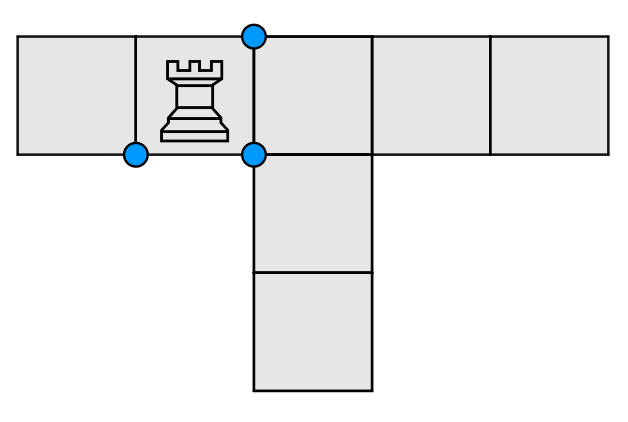}
				\end{minipage}  &  \begin{minipage}{0.23\textwidth}
					\includegraphics[scale=0.39]{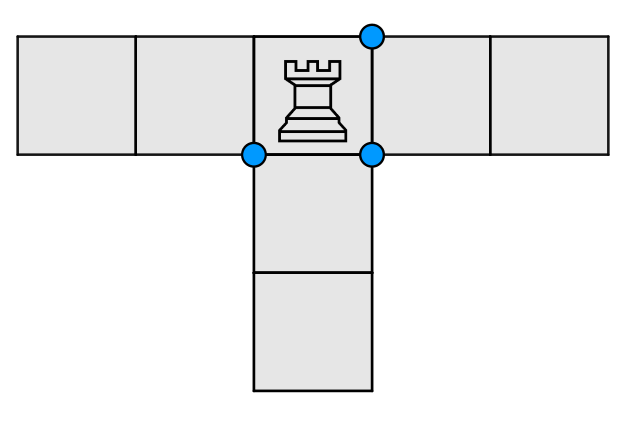}
				\end{minipage} &  \begin{minipage}{0.23\textwidth}
					\includegraphics[scale=0.39]{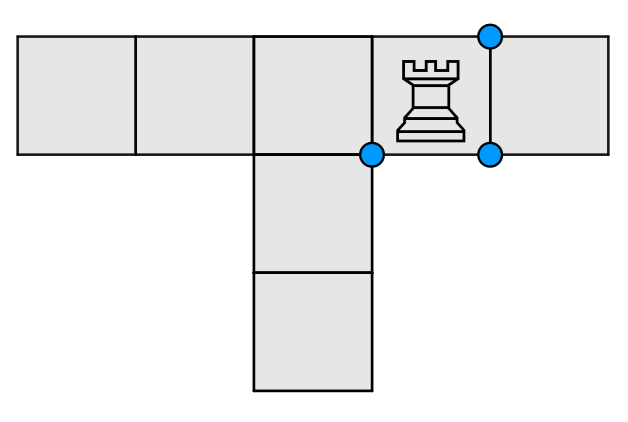}
				\end{minipage} &  \begin{minipage}{0.23\textwidth}
					\includegraphics[scale=0.39]{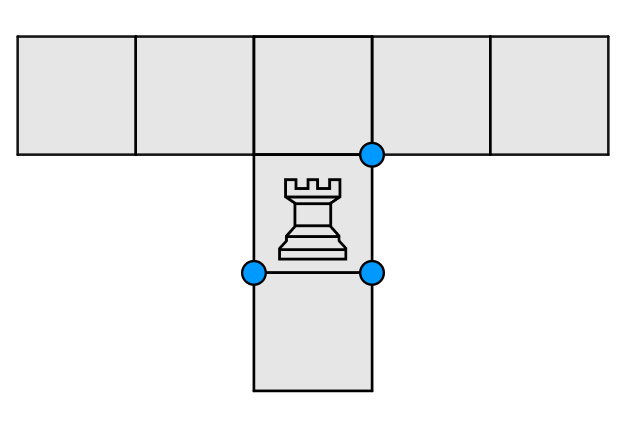}
				\end{minipage} &  \begin{minipage}{0.23\textwidth}
					\includegraphics[scale=0.39]{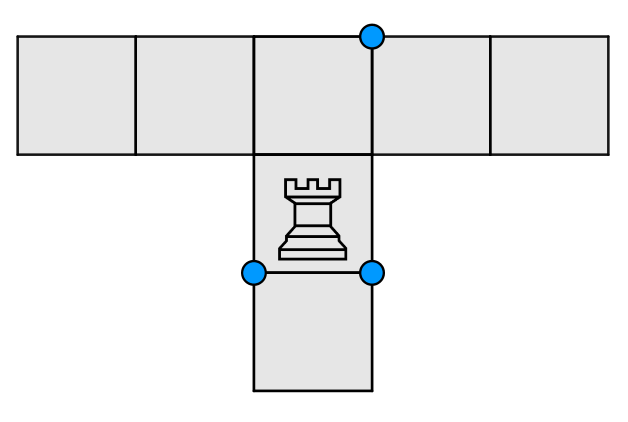}
				\end{minipage}&  \begin{minipage}{0.23\textwidth}
				\includegraphics[scale=0.39]{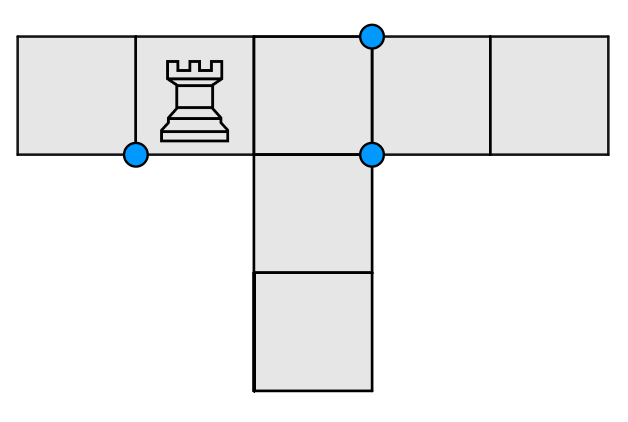}
				\end{minipage}\\
				\hline
				
				VIII &  \begin{minipage}{0.23\textwidth}
					\includegraphics[scale=0.42]{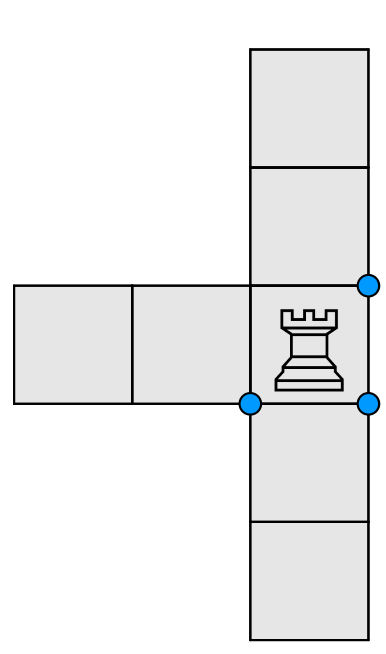}
				\end{minipage} &  \begin{minipage}{0.23\textwidth}
					\includegraphics[scale=0.42]{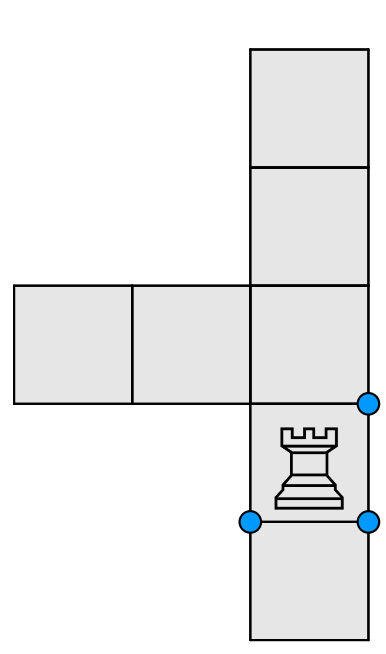}
				\end{minipage} & \begin{minipage}{0.23\textwidth}
				\includegraphics[scale=0.42]{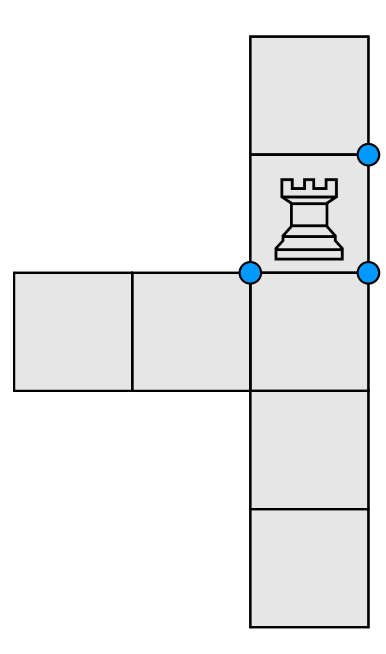}
				\end{minipage}  &   \begin{minipage}{0.23\textwidth}
				\includegraphics[scale=0.42]{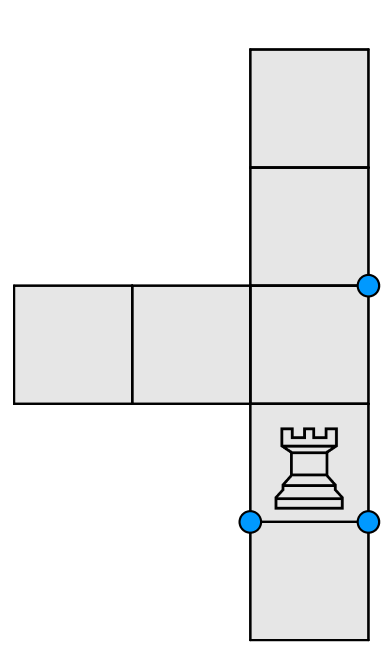}
			\end{minipage} \\
			\hline
			
				VIII$'$ & \begin{minipage}{0.23\textwidth}
				\includegraphics[scale=0.42]{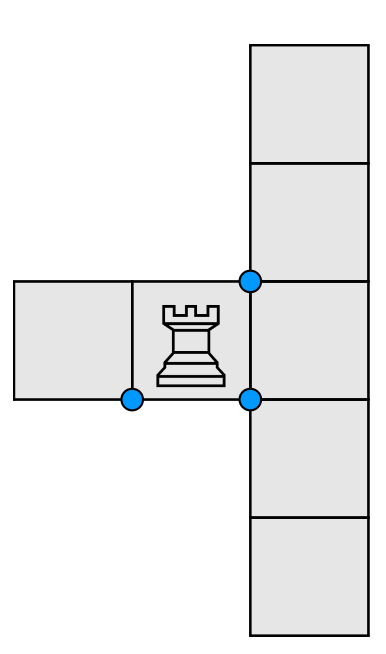}
			\end{minipage}  &          \begin{minipage}           {0.23\textwidth}
				\includegraphics[scale=0.42]{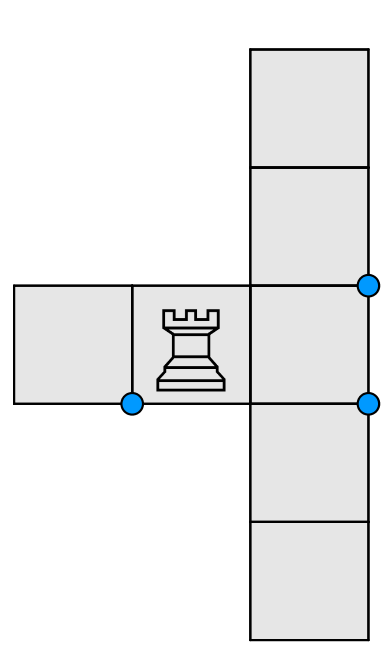}
			\end{minipage} &  \begin{minipage}{0.23\textwidth}
				\includegraphics[scale=0.42]{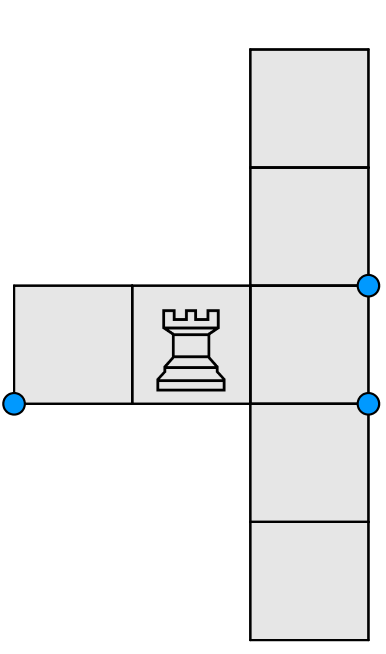}
			\end{minipage}  &  \begin{minipage}{0.23\textwidth}
			\includegraphics[scale=0.42]{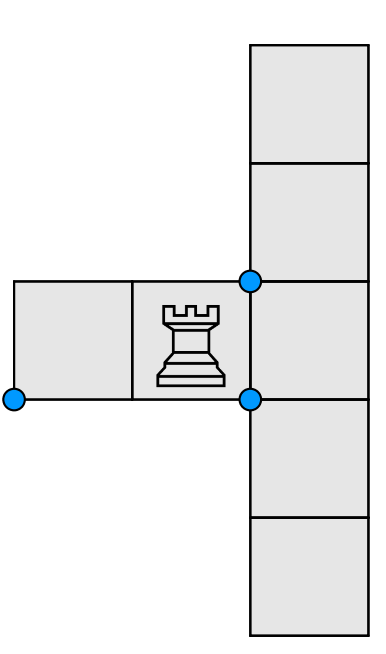}
		\end{minipage}\\
			\hline

				IX & \begin{minipage}{0.23\textwidth}
					\includegraphics[scale=0.44]{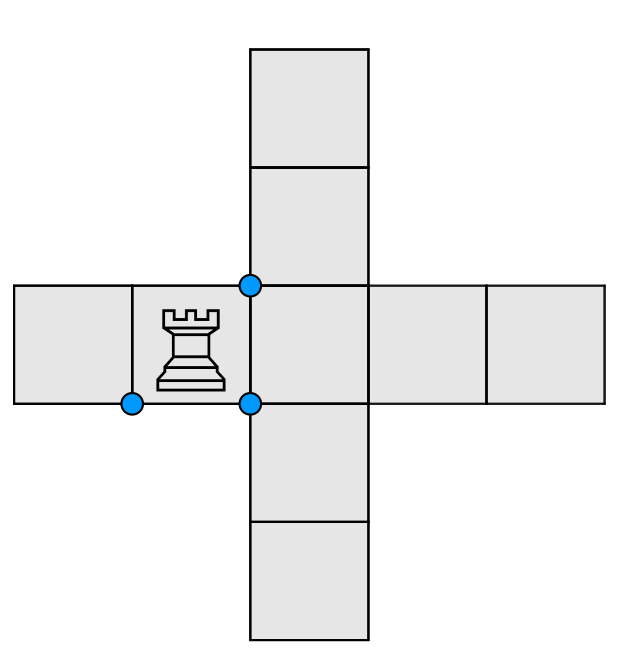}
				\end{minipage}  &  \begin{minipage}{0.23\textwidth}
					\includegraphics[scale=0.44]{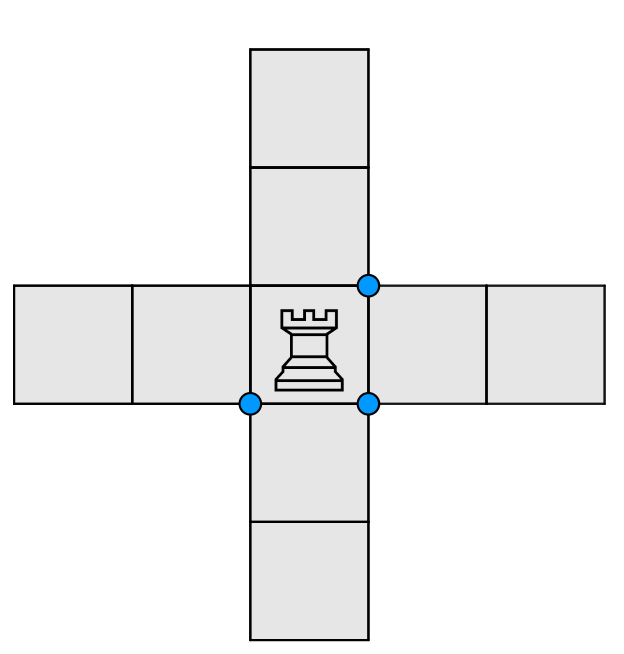}
				\end{minipage}  &  \begin{minipage}{0.23\textwidth}
					\includegraphics[scale=0.44]{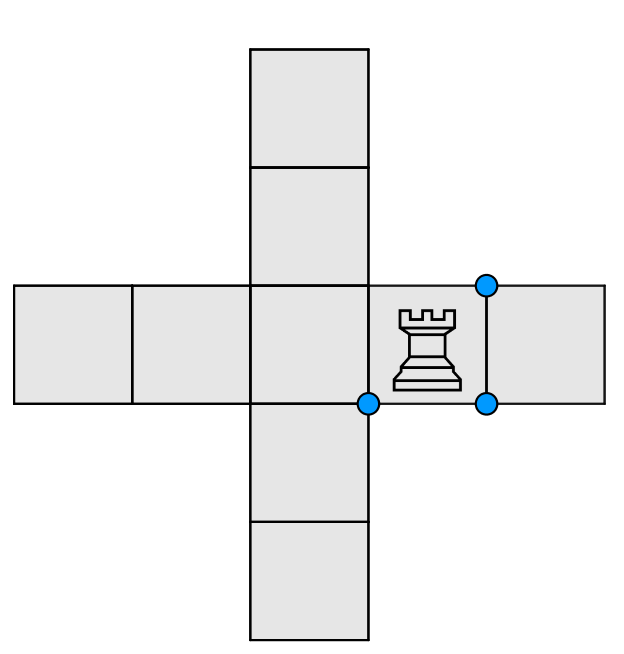}
				\end{minipage}&  \begin{minipage}{0.23\textwidth}
					\includegraphics[scale=0.44]{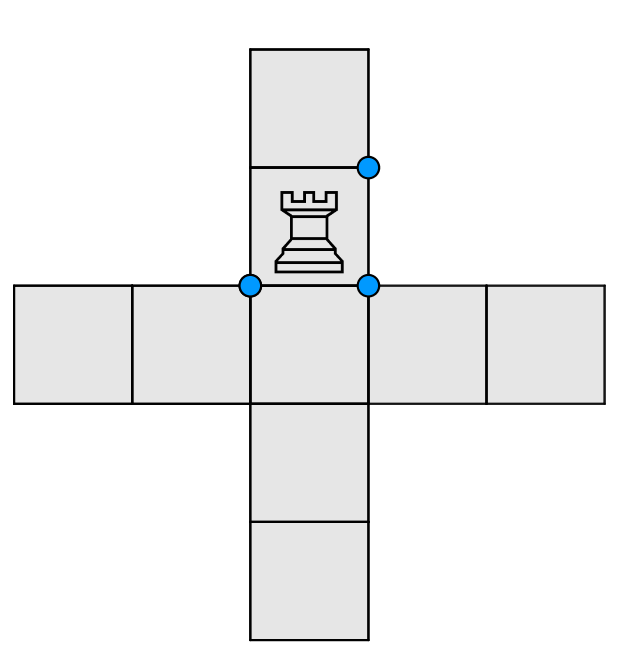}
				\end{minipage}&  \begin{minipage}{0.23\textwidth}
					\includegraphics[scale=0.44]{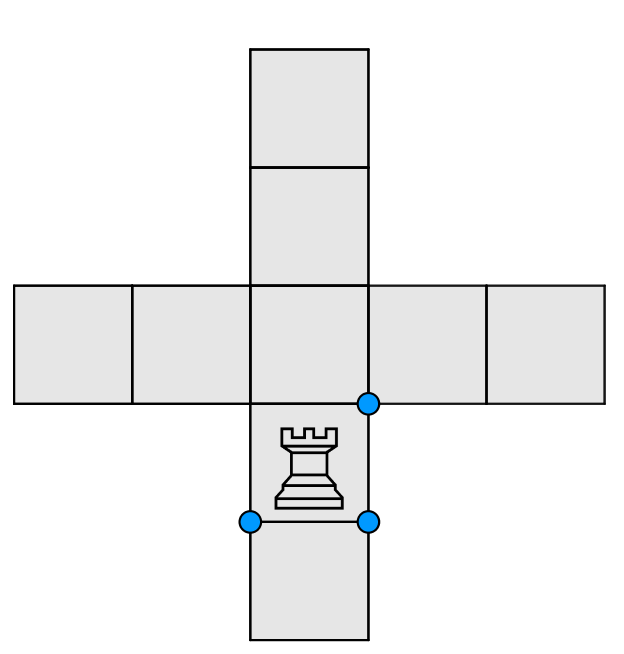}
				\end{minipage}&  \begin{minipage}{0.23\textwidth}
				\includegraphics[scale=0.44]{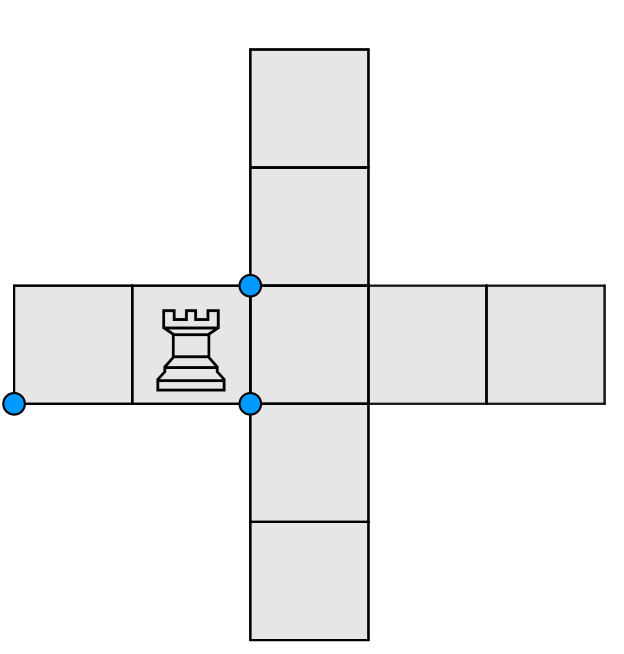}
			\end{minipage}\\
				\hline
				
				IX$'$ & \begin{minipage}{0.23\textwidth}
					\includegraphics[scale=0.44]{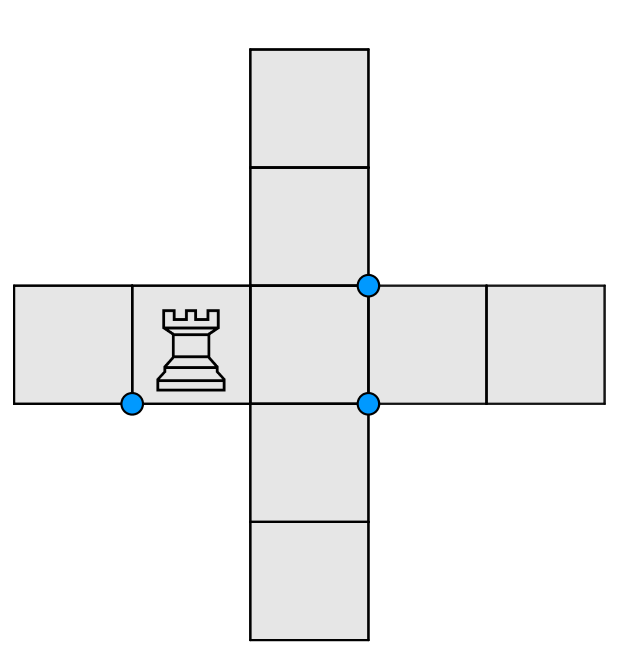}
				\end{minipage}  &  \begin{minipage}{0.23\textwidth}
					\includegraphics[scale=0.44]{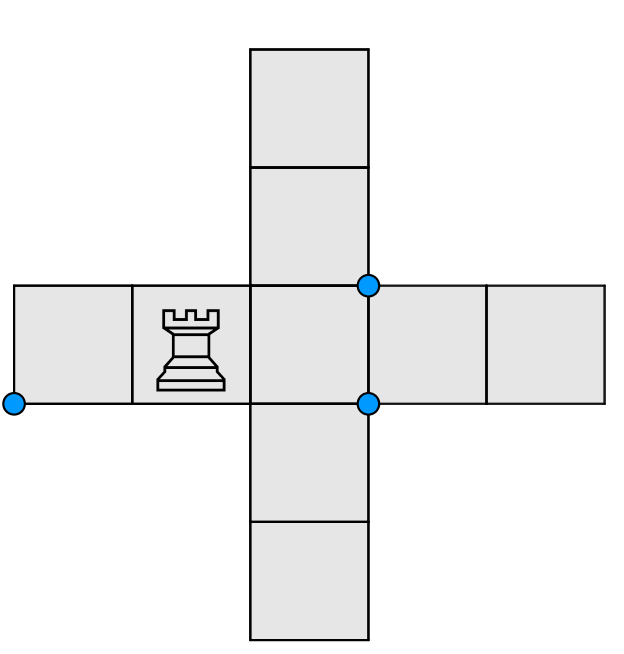}
				\end{minipage} &  \begin{minipage}{0.23\textwidth}
					\includegraphics[scale=0.44]{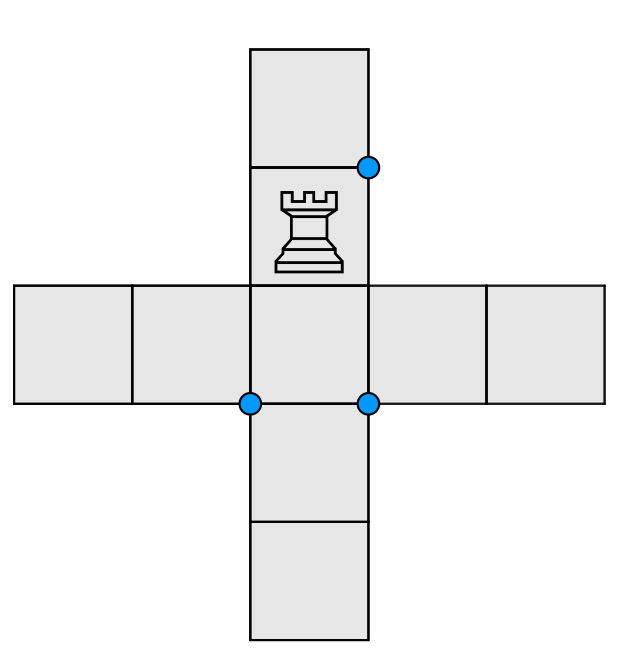}
				\end{minipage}&  \begin{minipage}{0.23\textwidth}
				\includegraphics[scale=0.44]{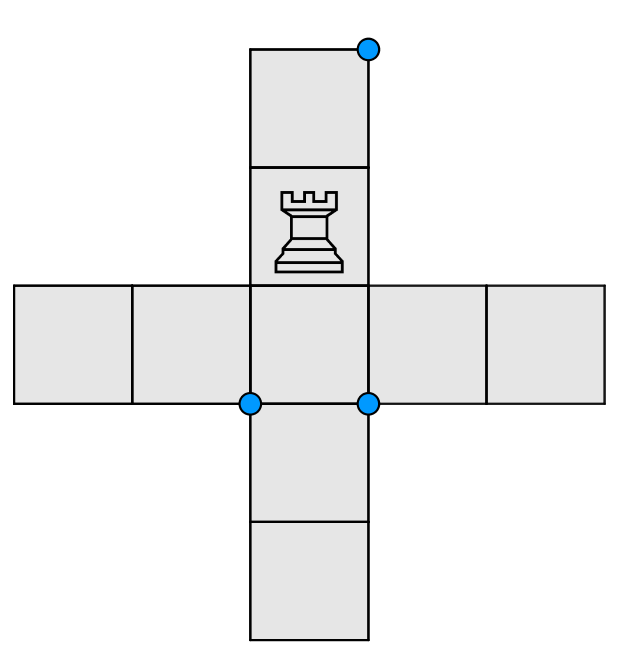}
				\end{minipage}&  \begin{minipage}{0.23\textwidth}
				\includegraphics[scale=0.44]{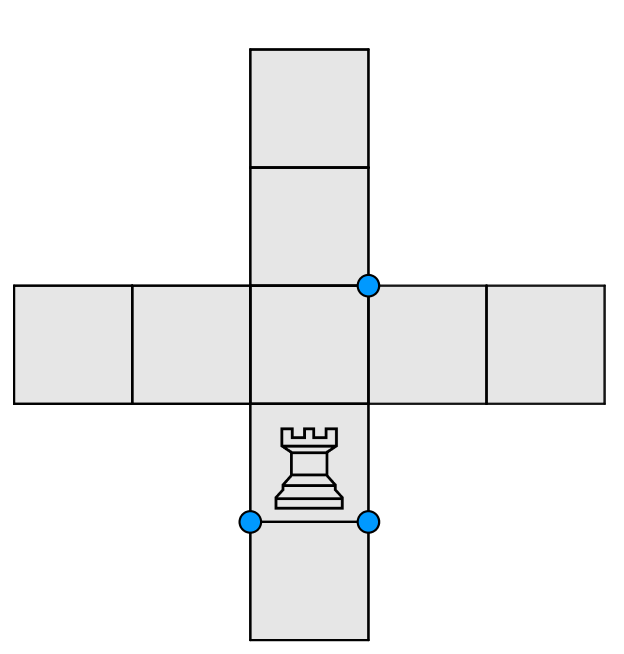}
			\end{minipage}&  \begin{minipage}{0.23\textwidth}
			\includegraphics[scale=0.44]{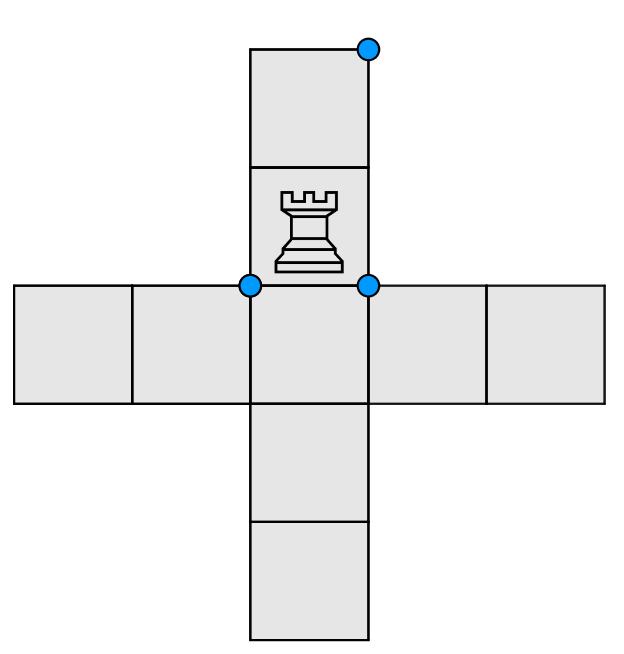}
		\end{minipage}\\
			\hline
		\end{tabular}}
		\caption{}
		\label{Table3}
	\end{table}

The aim of the following results is to demonstrate that $\cR(-)$ is bijective. We begin with the following lemma to prove its injectivity.

	\begin{prop}\label{Prop: For injective}
	Let $\cP$ be a grid polyomino and $\Delta_{\cP}$ be the attached simplicial complex. If $F_1$ and $F_2$ are two distinct facets of $\Delta_{\cP}$ then $\cR(F_1)\neq \cR(F_2)$.  
	\end{prop}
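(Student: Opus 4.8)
The plan is to prove injectivity by showing that the rook configuration $\cR(F)$ recovers the facet $F$. Since by Definition~\ref{Defn: rook configuration to a facet} the cardinality of $\cR(F)$ equals the number of generalized steps of $F$, from $\cR(F_1)=\cR(F_2)$ we first deduce that $F_1$ and $F_2$ have the same number $k$ of generalized steps. If $k=0$, then Theorem~\ref{Thm: The lexicographic order gives a shelling order}(1) together with shellability forces $F_0$ (the facet $F_{\cI}\cup\bigcup_{i,j}B_{ij}$ of the proof of Theorem~\ref{dim}) to be the unique facet with no generalized step: otherwise for some $j\geq 1$ the intersection $\langle F_0,\dots,F_{j-1}\rangle\cap\langle F_j\rangle$ would be generated by the empty collection, contradicting shellability. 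Hence $F_1=F_2=F_0$, and we may assume from now on that $k\geq 1$.

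The next step is to recover the generalized steps of $F$ from the cells occupied by its rooks. Let $\cR(F)=\{R_1,\dots,R_k\}$, where $R_\ell$ is attached to the generalized step $F'_\ell$, and let $C_\ell$ be the cell containing $R_\ell$. By the Remark preceding Lemma~\ref{Lemma: A is a cell of a change of direction}, a grid polyomino changes direction only in the local arrangements (A), (B), (C) listed there. So either $C_\ell$ is not a change-of-direction cell, in which case Lemma~\ref{Lemma: A is a cell of a change of direction} forces $F'_\ell=\{(i-1,j),(i,j),(i,j+1)\}$ with $(i,j)$ the lower right corner of $C_\ell$; or $C_\ell$ sits in or beside a change of direction, and then one inspects the finitely many local pictures collected in Table~\ref{Table3} and reads off which case of Definition~\ref{Defn: rook configuration to a facet}, equivalently which of the structures of Lemmas~\ref{Lemma: Structure generalized step}, \ref{Lemma: Structure generalized step - particular structure 1}, \ref{Lemma: Structure generalized step - particular structure 2}, produced $R_\ell$, hence the lower right corner of $F'_\ell$ and its three vertices. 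The point to check here is that this reading is unambiguous: in Table~\ref{Table3} no two generalized steps get their rooks placed in cells occupying the same position relative to the same local configuration of $\cP$.

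It then remains to recover $F$ itself from the set $\{F'_1,\dots,F'_k\}$. For this I would reconstruct $F$ greedily, scanning $V(\cP)$ in the order $<_{V(\cP)}$: a vertex $v$ is put \emph{outside} $F$ exactly when it is an anti-diagonal corner of an inner interval of $\cP$ whose other anti-diagonal corner is an already-reconstructed element of $F$, or when $v$ lies strictly between the corners of one of the $F'_\ell$ along its row or column; otherwise $v$ is put \emph{inside} $F$. Since a grid polyomino is locally rigid, a case check against the same configurations of Table~\ref{Table3} shows that this rule is compatible with $F$ being an independent set and with its maximality, so it rebuilds $F$ and nothing else. Combining the three steps, $\cR(F_1)=\cR(F_2)$ forces $F_1$ and $F_2$ to have the same generalized steps, hence $F_1=F_2$, contradicting $F_1\neq F_2$.

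The main obstacle is the bookkeeping encoded in Table~\ref{Table3}, on which both of the last two steps rest: one must run through all the ways a grid polyomino can bend near a change of direction and verify that the rook placements prescribed in Definition~\ref{Defn: rook configuration to a facet} are pairwise distinguishable, so that from the cell of a rook and the cells of $\cP$ around it one can tell exactly which of the seven cases of Definition~\ref{Defn: rook configuration to a facet} applied. The delicate instances are the long generalized steps with $a=i-3$ or $b=j+3$ treated in Lemmas~\ref{Lemma: Structure generalized step - particular structure 1} and \ref{Lemma: Structure generalized step - particular structure 2}: their interior is wider, and to pin such a step down one has to use whether the rook cell is the extremal or the middle cell of the change of direction.
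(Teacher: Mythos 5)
Your argument hinges on the claim that each rook, together with the local structure of $\cP$ around its cell, determines the generalized step that produced it, so that $\cR(F)$ recovers the set of generalized steps of $F$ by a purely local, facet-independent reading. This claim is false, and the paper's proof is organized precisely around the counterexample. Compare case (1) and case (2)(a) of Definition~\ref{Defn: rook configuration to a facet}: the step $\{(i-1,j),(i,j),(i,j+1)\}$ places its rook in the cell $C$ with lower right corner $(i,j)$, and the step $\{(i-2,j),(i,j),(i,j+1)\}$ places its rook in the \emph{same} cell $C$ whenever $C$ is an extremal cell of the change of direction. Both situations can occur at the same cell of the same grid polyomino; which one you see depends on the facet, not on $\cP$. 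This is exactly the pair of configurations (rows IB versus IC of Table~\ref{Table3}) that the paper's proof singles out: $F'=\{(2,1),(3,1),(3,2)\}\subset F_1$ and $F''=\{(1,1),(3,1),(3,2)\}\subset F_2$ produce the same rook $R_1$. So the ``unambiguous reading'' step, on which the rest of your reconstruction rests, cannot be carried out.

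The correct resolution, and the one the paper uses, is global rather than local: when the same rook arises from different generalized steps of $F_1$ and $F_2$, one shows that this forces a discrepancy elsewhere in the two rook configurations. In the example above, since $(2,1),(3,1)\in F_1$, maximality gives $(1,1),(2,3)\in F_1$ and $(2,2),(1,2),(1,3)\notin F_1$, so $\{(1,1),(2,1),(2,3)\}$ is a further generalized step of $F_1$ placing a rook in the cell with lower right corner $(2,2)$, while no generalized step of $F_2$ places a rook there; hence $\cR(F_1)\neq\cR(F_2)$ after all. Only after the sets of generalized steps are known to coincide does one argue (as you do in your third step, and as the paper does more cleanly via the argument of Theorem~\ref{Thm: The lexicographic order gives a shelling order}) that equal sets of generalized steps force equal facets. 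You would need to replace your second step by an analysis of this global kind; your first and third steps are essentially in line with the paper.
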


	\begin{proof}
	Let us assume that the numbers of generalized steps of $F_1$ and $F_2$ are $m$ and $n$, respectively, where $m, n \geq 1$. If $m \neq n$, then it is trivial that $\cR(F_1) \neq \cR(F_2)$. Assume now that $m = n$. Suppose there exists a generalized step $F'$ of $F_1$ which is not in $F_2$. If the rook $R_1$ related to $F'$ is not in $\cR(F_2)$, then the proof is complete. We therefore assume that $R_1 \in \cR(F_2)$. We may restrict our examination to cases (IB) and (IC) of Table \ref{Table3}, as the technique used to prove the other cases is analogous. Assume that $F' = \{(2,1), (3,1), (3,2)\} \subset F_1$, and let the generalized step of $F_2$ related to $R_1$ be $F'' = \{(1,1), (3,1), (3,2)\}$. Observe that $(1,1), (2,3) \in F_1$ by the maximality of $F_1$, and $(2,2), (1,2), (1,3) \notin F_1$ because $(2,1), (3,1) \in F_1$. Thus, $\{(1,1), (2,1), (2,3)\}$ is a generalized step of $F_1$, which implies that a rook is placed in the cell with lower-right corner $(2,2)$. On the other hand, observe that $(2,1) \notin F_2$, since $F''$ is a generalized step of $F_2$, and $(1,2), (2,2) \notin F_2$ because $(3,1) \in F_2$. Moreover, $(1,3) \in F_2$ by the maximality of $F_2$. There is no generalized step in $F_2$ that places a rook in the cell with lower-right corner $(2,2)$. Hence, $\cR(F_1) \neq \cR(F_2)$. With the preceding discussion in mind, it is evident that all other cases arising from Table \ref{Table3} can be addressed in a similar manner.\\
	Note that the statement is completely proved if we show that the case where $F_1$ and $F_2$ have the same generalized steps cannot hold. It is easy to observe the following claim: if $F_1$ and $F_2$ have the same generalized steps, then $F_1 = F_2$. In fact, suppose by contradiction that $F_1 \neq F_2$. Then, without loss of generality, assume that $F_2 = (F_1 \setminus \{f_1\}) \cup \{f_2\}$ and that $F_1 <_{\mathrm{lex}} F_2$. Using similar arguments as in the $\subseteq$ case of Theorem \ref{Thm: The lexicographic order gives a shelling order}, we can deduce that $f_1$ is the lower-right corner of a generalized step of $F_1$. Consequently, $F_1$ and $F_2$ cannot have the same generalized steps, which contradicts the assumption. Hence, we conclude that $F_1 = F_2$. \\
    In conclusion, we cannot assume that $F_1$ and $F_2$ have the same generalized steps; otherwise, we would arrive at a contradiction with the fact that $F_1$ and $F_2$ are two distinct facets of $\Delta_{\cP}$.
	\end{proof}
    
       The following proposition is is crucial as it establishes the surjectivity of $\cR(-)$. Before presenting the proof, we introduce some notations related to the structure of a grid polyomino that will be useful. For $i\in[r]$ and $j\in[s]$, let $\cH_{ij}=[a_{ij}, b_{ij}]$ be the holes of $\cP$, where $a_{ij}=((a_{ij})_1, (a_{ij})_2)$, $b_{ij}=((b_{ij})_1, (b_{ij})_2)$ with $1<(a_{ij})_1<(b_{ij})_1<m$ and $1<(a_{ij})_2<(b_{ij})_2<n$. The anti-diagonal corners of $\cH_{ij}$ are $c_{ij}:=((a_{ij})_1, (b_{ij})_2)$ and $d_{ij}:=((b_{ij})_1, (a_{ij})_2)$. Now, we denote by:
    \begin{itemize}
        \item $A_{ij}$ the cell of $\cP$ having $a_{ij}$ as upper right corner, for all $i\in[r]$ and for all $j\in[s]$;
        \item $A_{r+1,j}$ the cell of $\cP$ having $d_{rj}$ as upper left corner, for all $j\in[s]$;
        \item $A_{i,s+1}$ the cell of $\cP$ having $c_{is}$ as lower right corner, for all $i\in[r]$;
        \item $A_{r+1,s+1}$ the cell of $\cP$ having $b_{rs}$ as lower left corner.
    \end{itemize}
    Additionally, we set $V_i=[A_{i1},A_{i,s+1}]$ for all $i\in [r+1]$ and $H_j=[A_{1j},A_{r+1,j}]$ for all $j\in [s+1]$. Observe that $V_i\cap H_j=\{A_{ij}\}$ for all $i\in [r+1]$ and $j\in [s+1]$. For further clarity, refer to Figure \ref{Figure: some notations on grid polyominoes}.

     \begin{figure}[h!]
			\centering
			\includegraphics[scale=0.7]{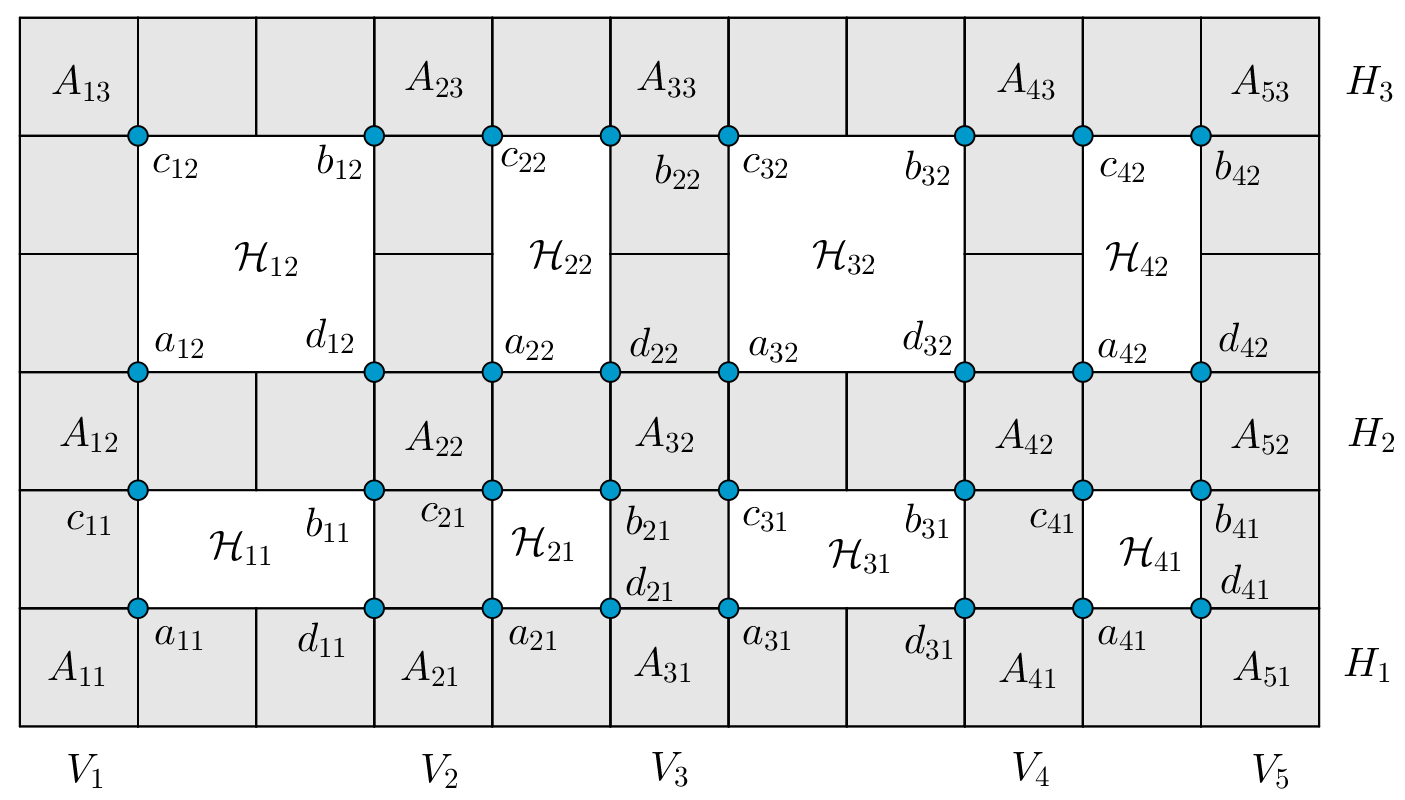}
			\caption{A grid polyomino and the related notations.}
			\label{Figure: some notations on grid polyominoes}
    \end{figure}

    \begin{prop}\label{Prop: For surjective}
    Let $\cP$ be a grid polyomino and $\Delta_{\cP}$ be the attached simplicial complex. If $\cT$ is a $k$-rook configuration in $\cP$, then there exists a facet $F$ of $\Delta_{\cP}$ with $k$ generalized steps such that $\cR(F)=\cT$.  
    \end{prop}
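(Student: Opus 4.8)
\emph{Proof strategy.} The plan is to argue by induction on $k$, building the facet $F$ from the empty-configuration facet by a sequence of the flip moves used in the shelling. For $k=0$ the empty configuration is realized by the facet $F_0=F_{\cI}\cup\bigcup_{i,j}B_{ij}$ of Theorem~\ref{dim}: using that its vertices all lie on the left or top border of $I$ or on the lower-right boundary path of a hole, one checks directly that $F_0$ contains no generalized step, which settles the case $k=0$. Suppose $k\geq 1$ and that the statement holds for $k-1$, and let $\cT$ be a $k$-rook configuration. Choose the rook $R\in\cT$ whose cell has $<_{V(\cP)}$-smallest lower right corner and set $\cT'=\cT\setminus\{R\}$; since $\cT$ is non-attacking, no rook of $\cT'$ lies in the row or in the column of cells through the cell of $R$. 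By the inductive hypothesis there is a facet $F'$ with $k-1$ generalized steps and $\cR(F')=\cT'$, and by Proposition~\ref{Prop: For injective} it is the unique such facet.

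The core step is to pass from $F'$ to a facet $F$ carrying the same $k-1$ generalized steps together with one more, $S_R$, whose attached rook is $R$. The triple $S_R=\{(a,j),(i,j),(i,b)\}$ is determined by inverting Definition~\ref{Defn: rook configuration to a facet} and Lemmas~\ref{Lemma: Structure generalized step}, \ref{Lemma: Structure generalized step - particular structure 1}, \ref{Lemma: Structure generalized step - particular structure 2}: the local arrangement of $\cP$ at the cell of $R$ --- whether that cell is, or lies inside, a change of direction of $\cP$, and of which of the finitely many types in the Remark before Lemma~\ref{Lemma: A is a cell of a change of direction} (equivalently, which row of Table~\ref{Table3}) --- forces a unique $S_R$ that any facet realizing $R$ there must contain. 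One then moves from $F'$ to $F$ by a sequence of the anti-diagonal-corner flips $G\mapsto G\setminus\{v\}\cup\{w\}$ (where $v$ is the lower right corner of a generalized step of $G$ and $w$ is the opposite anti-diagonal corner of the associated inner interval) from the proof of Theorem~\ref{Thm: The lexicographic order gives a shelling order}, all confined to a bounded region around the cell of $R$: these flips first install a new rook in that region and then slide it along the (rook-free) row or column of cells through the cell of $R$ until it sits at the cell of $R$, so that the configurations attached to the intermediate facets are $\cT'$, then $\cT'$ together with one extra rook marching toward the cell of $R$, and finally $\cT$. The work here is to verify, case by case over the local grid configurations, that every intermediate set is again a facet of $\Delta_{\cP}$ --- still a face (no two of its vertices are the anti-diagonal corners of an inner interval of $\cP$) and still maximal --- and that the $k-1$ generalized steps of $F'$ are never disturbed and no spurious new generalized step is created; here the width-one corridors of a grid polyomino, the restriction on where changes of direction can occur, and the fact that the row and column of cells through the cell of $R$ carry none of the other rooks keep the situation controlled.

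Granting this, $S_R$ is a generalized step of $F$ by construction, the $k-1$ generalized steps of $F'$ survive in $F$, and by Lemma~\ref{Lemma: A is a cell of a change of direction} together with the locality of the flips no further generalized step appears, so $F$ has exactly $k$ generalized steps. By Definition~\ref{Defn: rook configuration to a facet}, the rooks attached to the surviving steps are again $\cT'$ and the rook attached to $S_R$ is exactly the cell of $R$ --- this is precisely how $S_R$ was chosen off Table~\ref{Table3} --- so $\cR(F)=\cT'\cup\{R\}=\cT$, completing the induction. (Alternatively, one may skip the induction and assemble $F$ from $F_0$ directly by performing all these corridor-modifications simultaneously, since distinct rooks of a non-attacking configuration lie in pairwise distinct rows and columns of cells of $\cP$.)

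The main obstacle is exactly the case analysis of the second paragraph: checking, uniformly over the rows of Table~\ref{Table3}, that the flip sequence installing $S_R$ stays within facets, leaves the generalized steps already present untouched, and creates no unwanted one. The grid hypothesis is what makes this manageable --- all corridors have width one and changes of direction occur only in the listed configurations, so the shape of a facet near the cell of $R$ is tightly constrained --- while choosing $R$ with $<_{V(\cP)}$-minimal lower right corner is what both pins down $S_R$ and ensures that the region affected by the flips is free of the remaining rooks.
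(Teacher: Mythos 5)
Your overall strategy --- induction on $k$, passing from the facet realizing $\cT\setminus\{R\}$ to one realizing $\cT$ by a chain of anti-diagonal flips that installs one new generalized step --- is genuinely different from the paper's, which constructs $F$ directly and explicitly, scanning the grid strip by strip ($H_1$, then the vertical corridors $]A_{i1},A_{i2}[$, then $H_2$, and so on) and writing down the vertices of $F$ in each region according to where the rooks of $\cT$ sit. But as written your argument has a real gap: the entire content of the proposition is deferred to the sentence beginning ``the work here is to verify, case by case \dots'' Nothing in the shelling argument of Theorem~\ref{Thm: The lexicographic order gives a shelling order} gives you a flip that \emph{creates} a generalized step in a prescribed cell while leaving all other steps intact; the flips there are only shown to produce \emph{some} lexicographically larger facet, and a single flip can change the step structure far away because maximality of a facet is a global condition (removing one vertex may force several others in, as the $\subseteq$ direction of that proof already illustrates). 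You would have to prove, for each local configuration of Table~\ref{Table3}, that every intermediate set in your ``sliding'' sequence is a face, that it is maximal, and that its generalized steps are exactly the intended ones --- an analysis of the same order of magnitude as the paper's explicit construction, and one you have not carried out. The citation of Lemma~\ref{Lemma: A is a cell of a change of direction} to rule out spurious new steps does not do that work: that lemma only locates the cell of an existing step at a change of direction.

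A second, more specific problem is your claim that the local arrangement of $\cP$ at the cell of $R$ ``forces a unique $S_R$ that any facet realizing $R$ there must contain.'' That is not how the assignment of Definition~\ref{Defn: rook configuration to a facet} inverts. The same cell can receive its rook from generalized steps of different shapes depending on the rest of the facet: for instance, by Lemma~\ref{Lemma: Structure generalized step - particular structure 1}, whether the step at a given change of direction has $a=i-3$ rather than $a=i-2$ or $a=i-1$ is equivalent to whether $V_1\cap F$ and $V_2\cap F$ are nonempty, i.e.\ to where the facet places vertices in columns extending away from the cell of $R$ --- which is governed by the positions of the \emph{other} rooks. So $S_R$ cannot be read off from the geometry of $\cP$ near $R$ alone; it depends on $\cT'$ and on the facet $F'$ you start from. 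This coupling between distant parts of the facet is exactly what the paper's proof controls by building $F$ globally, corridor by corridor, rather than by local surgery, and your proposal does not account for it.
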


    \begin{proof}
    Let $\cT$ be a $k$-rook configuration in $\cP$. Starting from the bottom of the grid polyomino, we proceed step by step, examining all possible cases based on the presence of rooks of $\cT$ in the intervals $H_j$ and $V_i$. At each step, we define a suitable subset of the facet $F$ such that $\cR(F) = \cT$. The goal is to provide a procedure to inductively define $F$ from the bottom to the top of the polyomino. For clarity, we illustrate each step with suitable figures to visualize the configurations.\\
    
    (1) Assume that there is no rook in $H_1$. Then we set $F_1=\{(1,1),(1,2)\}\cup \bigcup_{k=1}^r[a_{k1}+(1,0),d_{k1}]$. Observe that no step occurs in $F_1$. We illustrate it in Figure~\ref{Figure15}.
    \begin{figure}[h]
    \includegraphics[scale=0.53]{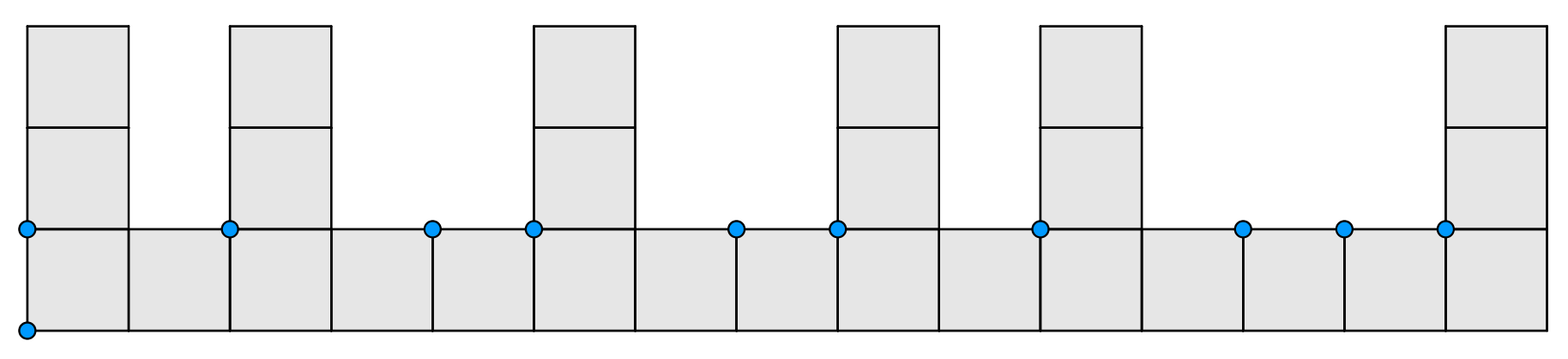}
		\caption{Example of case (1).}
		\label{Figure15}
        \end{figure} 
        
     \noindent Now, we examine two cases (A) and (B) in order to define a new set $F_2$ by adding suitable points to $F_1$. 
    \begin{enumerate}[(A)]
     \item Suppose that there is no rook in $]A_{i1},A_{i2}[$ for all $i\in [r+1]$. Then $F_2=[(1,2),c_{11}-(1,0)]\cup (\bigcup_{k=1}^r [d_{k1},b_{k1}])\cup F_1$. In this way there are no steps in $F_2$
     (see Figure~\ref{Figure16}).
     
       \begin{figure}[h!]
    \includegraphics[scale=0.53]{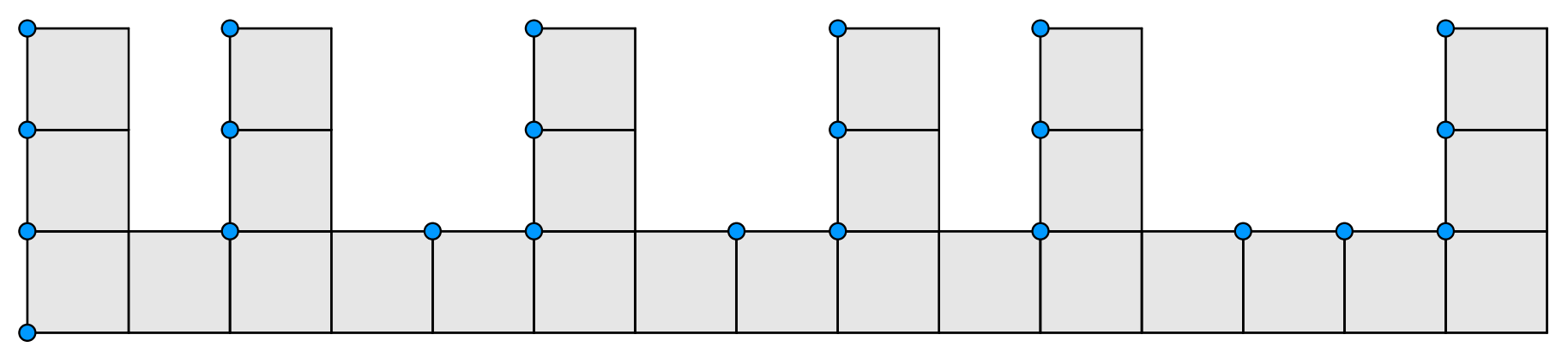}
		\caption{Example of case (1)-(A).}
		\label{Figure16}
        \end{figure} 
     
     \item Suppose that there exists $I_t = \{i_1 < i_2 < \dots < i_t\} \subseteq [r+1]$ such that there is a rook in $]A_{i_k1}, A_{i_k2}[$. We set $J_t = [r] \setminus I_t$. For all $i \in I_t$, let $a_i$ and $b_i$ denote, respectively, the lower left and lower right corners of the cell in $]A_{i1}, A_{i2}[$ where the rook is placed. We consider four sub-cases.
     \begin{enumerate}[(a)]
         \item Assume that $i_1=1$ and $i_t=r+1$. We set $F_2=([(1,2),a_{11}]\cup[a_{11},c_{11}])\cup \big(\bigcup_{i\in I_t\setminus\{1,r+1\}}([a_{i1}-(1,0),a_{i}]\cup[b_{i},c_{i1}])\big)\cup \big(\cup_{j\in J_t}[a_{j1}-(1,0),c_{j1}-(1,0)]\big) \cup ([d_{r1},d_{r1}+(1,0)]\cup[d_{r1}+(1,0),b_{r1}+(1,0)])\cup F_1$.  Note that $\{a_{i},b_{i},b_{i}+(0,1)\}$ is a step of $F_2$ for all $i\in I_t$. We refer to Figure~\ref{Figure1Ba}.
           \begin{figure}[h!]
 \includegraphics[scale=0.53]{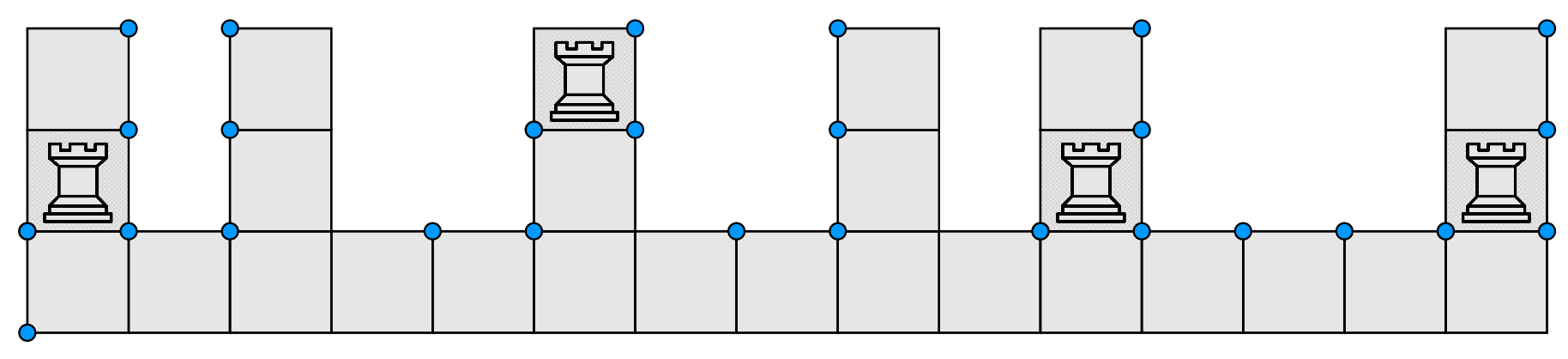}
		\caption{Example of case (1)-(B)-(a).}
		\label{Figure1Ba}
        \end{figure} 
         \item If $i_1\neq1$ and $i_t=r+1$, then
         $F_2=[(1,2),c_{11}-(1,0)]\cup \big(\bigcup_{i\in I_t\setminus\{r+1\}}([a_{i1}-(1,0),a_{i}]\cup[b_{i},c_{i1}])\big)\cup \big(\cup_{j\in J_t}[a_{j1}-(1,0),c_{j1}-(1,0)]\big) \cup ([d_{r1},a_{r+1}]\cup[b_{r+1},b_{r1}+(1,0)])\cup F_1$. 
         \item If $i_1\neq1$ and $i_t\neq r+1$, then 
         $F_2=[(1,2),c_{11}-(1,0)]\cup \big(\bigcup_{i\in I_t}([a_{i1}-(1,0),a_{i}]\cup[b_{i},c_{i1}])\big)\cup \big(\cup_{j\in J_t}[a_{j1}-(1,0),c_{j1}-(1,0)]\big) \cup [d_{r1},b_{r1}]\cup F_1$. 
         \item If $i_1=1$ and $i_t\neq r+1$, then $F_2=([(1,2),a_{11}]\cup[b_{11},c_{11}])\cup \big(\bigcup_{i\in I_t\setminus\{1\}}([a_{i1}-(1,0),a_{i}]\cup[b_{i},c_{i1}])\big)\cup \big(\cup_{j\in J_t}[a_{j1}-(1,0),c_{j1}-(1,0)]\big) \cup [d_{r1},b_{r1}]\cup F_1$.
     \end{enumerate}
    \end{enumerate}

    (2) Assume that a rook $R_1$ is located in $H_1$. Let $c_i$ and $d_i$ represent the lower right and upper right corners, respectively, of the cell in $H_1$ where the rook is placed. We define $F_1 = [(1,1), c_i] \cup [d_i, (m,2)]$. Next, we examine the cases where a rook is either present or absent in certain vertical cell intervals, aiming to define a new set $F_2$ based on the previously defined $F_1$.
    
    \begin{enumerate}[(A)]
        \item  Suppose that there is no rook in $]A_{i1},A_{i2}[$ for all $i\in [r+1]$. We consider the following four sub-cases.
        \begin{enumerate}[(a)]
        
            \item if $R_1$ is in $A_{i1}$ for some $i=2,\dots,r$ then we set $F_2=[(1,3),c_{11}-(1,0)]\cup \big( \bigcup_{k=1}^{i-1} [d_{k1}+(0,1),b_{k1}]\big)\cup [a_{i1},c_{i1}]\cup\big( \bigcup_{k=i}^{r} [d_{k1},b_{k1}]\big)\cup \big(F_1\setminus (\{a_{k1}-(0,1):k=1,\dots,i-1\}\cup \{a_{k1}:k=i+1,\dots,r\}\cup \{d_{r1}+(1,0)\})\big)$, see Figure~\ref{Figure2Aa}.

            \begin{figure}[h!]
		\centering	\includegraphics[scale=0.53]{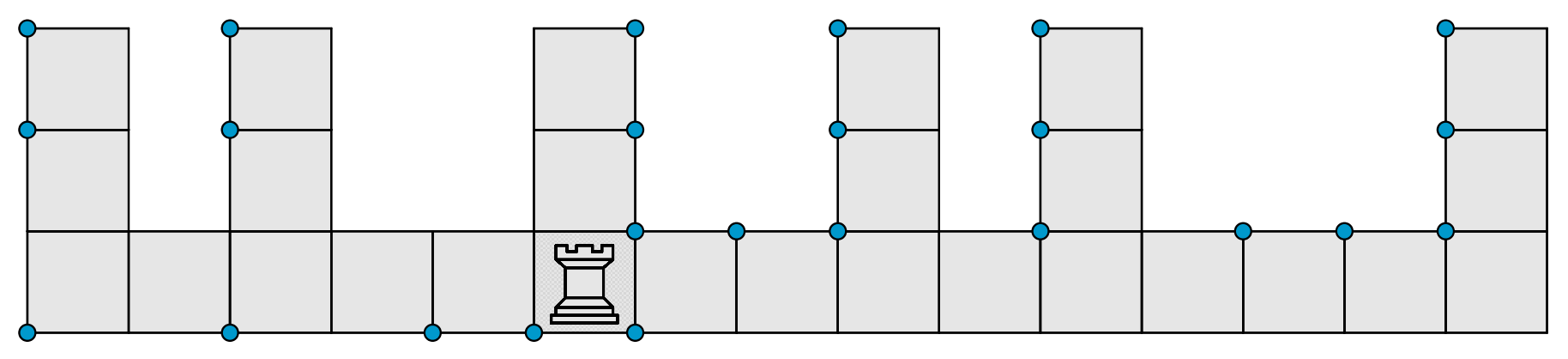}
		\caption{Example of case (2)-(A)-(a).}
		\label{Figure2Aa}
        \end{figure} 
        
             \item if the rook $R_1$ is in $A_{11}$, then we set $F_2=[a_{11}+(0,1),c_{11}]\cup (\bigcup_{k=1}^r [d_{k1},b_{k1}])\cup\big(F_1\setminus(\{a_{11}-(0,1),d_{r1}+(1,0)\}\cup \{a_{k1}:k=2,\dots,r\})\big)$. 
            \item if the rook $R_1$ is in $A_{r+1,1}$, then $F_2=[(1,3),c_{11}-(1,0)]\cup (\bigcup_{k=1}^{r-1} [d_{k1}+(0,1),b_{k1}])\cup [d_{r1}+(1,0),b_{r1}+(1,0)]\cup \big(F_1\setminus(\{a_{k1}:k=1,\dots,r\})\big)$.
            \item if $R_1$ is in $]A_i,A_{i+1}[$ for some $i\in [r]$ then we set $F_2=[(1,3),c_{11}-(1,0)]\cup (\bigcup_{k=1}^r [d_{k1},b_{k1}])\cup \big(F_1\setminus\ (\{a_{k1}-(0,1):k=1,\dots,i\} \cup \{a_{k1}:k=i+1,\dots,r \}\cup \{d_{r1}+(1,0)\})\big)$. We illustrate this configuration in Figure~\ref{Figure2Ad}.
            
        \begin{figure}[h!]
	\centering	\includegraphics[scale=0.53]{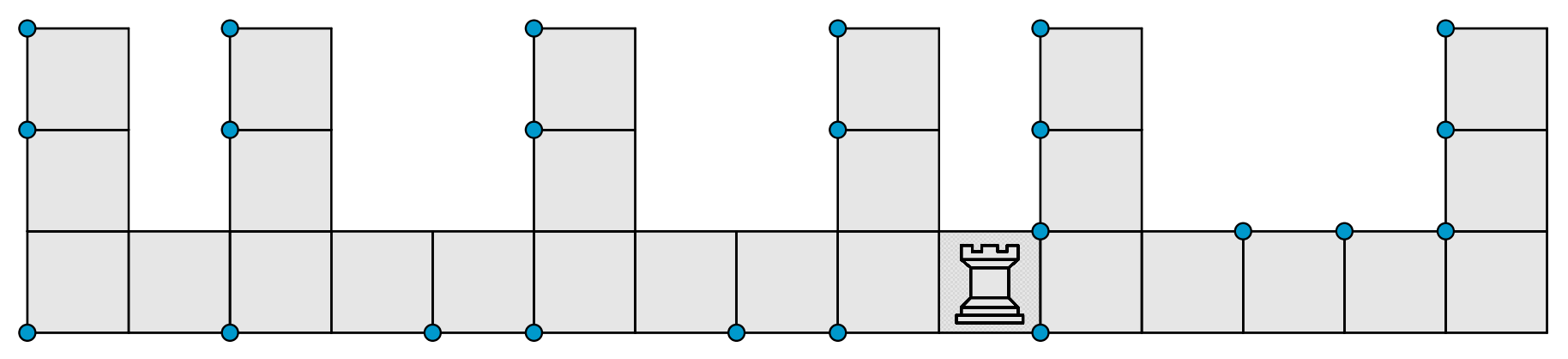}
		\caption{Example of case (2)-(A)-(d).}
		\label{Figure2Ad}
        \end{figure} 
        
        \end{enumerate}
        
        \item Suppose that there exists $I_t=\{i_1<i_2<\dots<i_t\}\subseteq [r+1]$ such that a rook is positioned within $]A_{i_k1}, A_{i_k2}[$. We define $J_t = [r] \setminus I_t$. For each $i \in I_t$, let $a_i$ and $b_i$ represent the lower left and lower right corners, respectively, of the cell in $]A_{i1}, A_{i2}[$ where the rook is located. Additionally, we set $K_t=\big\{k\in I_t:\text{a rook is placed in the cell at North of $A_{k1}$}\big\}$. The following sub-cases are then considered.
        \begin{enumerate}[(a)]
            \item  Assume that $R_1$ is in $A_{i1}$ for some $i=2,\dots,r$. 
            \begin{enumerate}[(i)]
                \item If $i_1=1$, $i_t=r+1$ and $i_1,i_t\in K_t$ (see Figure \ref{Figure 2Bi}), then
            $F_2=[(2,3),c_{11}]\cup\big(\bigcup_{k\in K_t\setminus\{1,r+1\}} [a_{k1}+(0,1),c_{k1}]\big)\cup\big(\bigcup_{k\in I_t\setminus K_t} ([a_{k1}+(-1 ,1),a_{k}]\cup [b_k,c_{k1}])\big)\cup \big(\cup_{j\in J_t}[a_{j1}+(-1,1),c_{j1}-(1,0)]\big)\cup [a_{i1},c_{i1}] \cup [d_{r1}+(1,0),b_{r1}+(1,0)]\cup \Big(F_1\setminus\big(\{a_{k1}-(0,1):k\in I_t\setminus K_t, k<i\}\cup\{a_{k1}:k\in I_t\setminus K_t, k>i\}\cup \{a_{j1}-(0,1):j\in J_t,j<i\}\cup\{a_{j1}:j\in J_t,j>i\}  \big)\Big)$.
            
        \begin{figure}[h!]
		\centering	\includegraphics[scale=0.53]{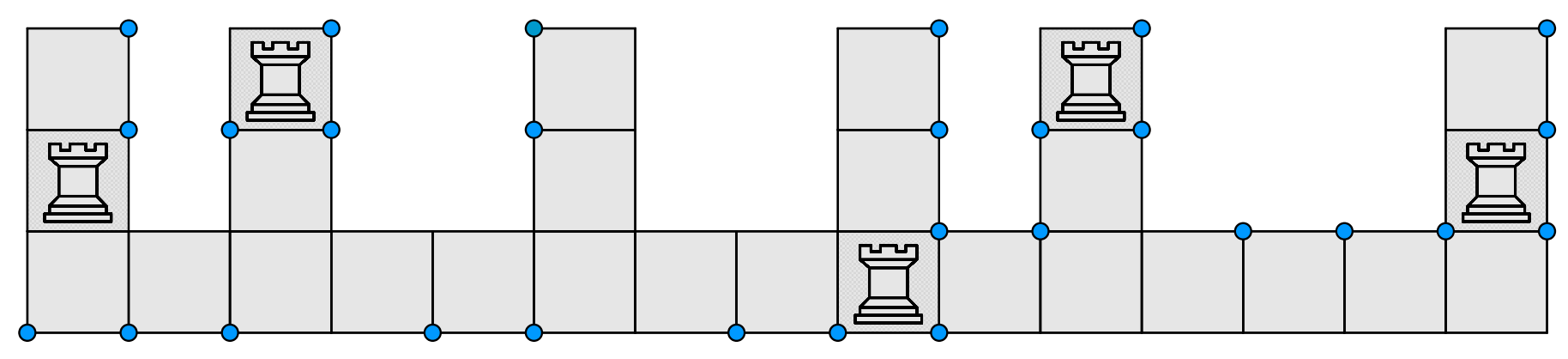}
		\caption{Example of case (2)-(B)-(a)-(i).}
		\label{Figure 2Bi}
        \end{figure} 
        
        \item If $i_1\neq 1$, $i_t=r+1$ and $i_t\in K_t$, then $F_2$ is as presented in Figure~\ref{Figure 2Ba ii}.

        \begin{figure}[h!]
		\centering	\includegraphics[scale=0.53]{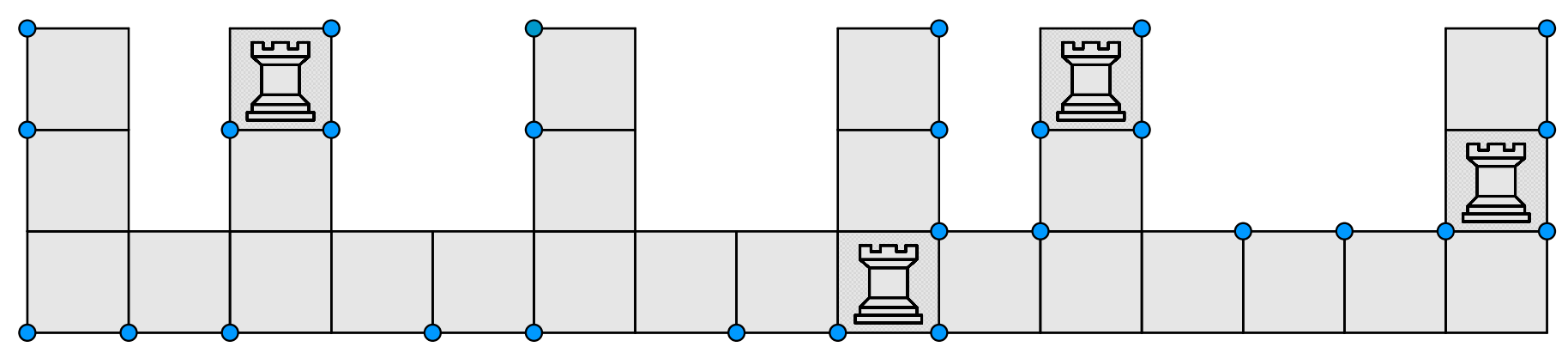}
		\caption{Example of case (2)-(B)-(a)-(ii).}
		\label{Figure 2Ba ii}
        \end{figure} 

        \item If $i_1\neq 1$, $i_t=r+1$ and $i_t\in I_t\setminus K_t$, then $F_2$ is as in Figure~\ref{Figure 2Ba iii}.

        \begin{figure}[h!]
		\centering	\includegraphics[scale=0.53]{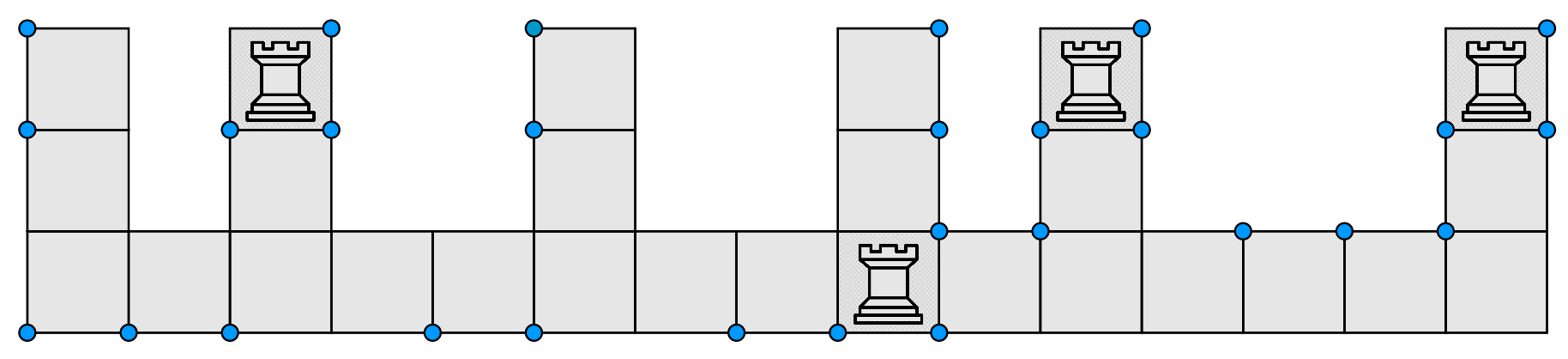}
		\caption{Example of case (2)-(B)-(a)-(iii).}
		\label{Figure 2Ba iii}
        \end{figure} 

        \item If $i_1\neq 1$, $i_t\neq r+1$, then $F_2$ is as in Figure~\ref{Figure 2Ba iv}.

        \begin{figure}[h!]
		\centering	\includegraphics[scale=0.53]{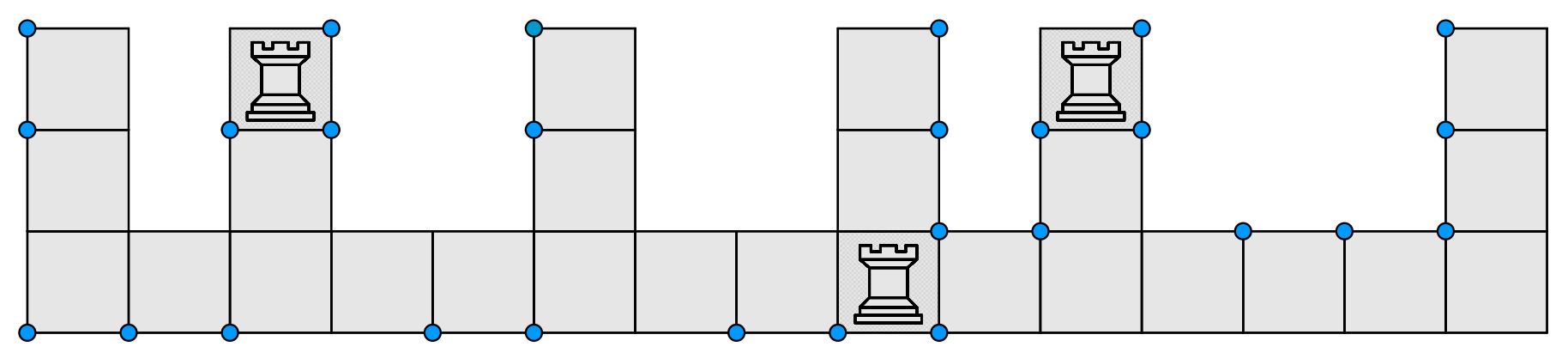}
		\caption{Example of case (2)-(B)-(a)-(iv).}
		\label{Figure 2Ba iv}
        \end{figure} 
        
            \end{enumerate}
        In the other remaining cases, $F_2$ can be defined in a similar manner.
        
   \item Assume that $R_1$ is in $]A_i,A_{i+1}[$ for some $i\in [r]$.  
   \begin{enumerate}[(i)]
       \item If $i_1=1$, $i_t=r+1$ and $i_1,i_t\in K_t$ (see Figure \ref{Figure 2Bb i}), then
            $F_2=[(2,3),c_{11}]\cup\big(\bigcup_{k\in K_t\setminus\{1,r+1\}} [a_{k1}+(0,1),c_{k1}]\big)\cup\big(\bigcup_{k\in I_t\setminus K_t} ([a_{k1}+(-1,1),a_{k}]\cup [b_k,c_{k1}])\big)\cup \big(\cup_{j\in J_t}[a_{j1}+(-1,1),c_{j1}-(1,0)]\big) \cup [d_{r1}+(1,0),b_{r1}+(1,0)]\cup \Big(F_1\setminus\big(\{a_{k1}-(0,1):k\in I_t\setminus K_t, k<i+1\}\cup\{a_{k1}:k\in I_t\setminus K_t, k>i\}\cup \{a_{j1}-(0,1):j\in J_t,j<i+1\}\cup\{a_{j1}:j\in J_t,j>i\}  \big)\Big)$.
            \begin{figure}[h!]
	\centering	\includegraphics[scale=0.53]{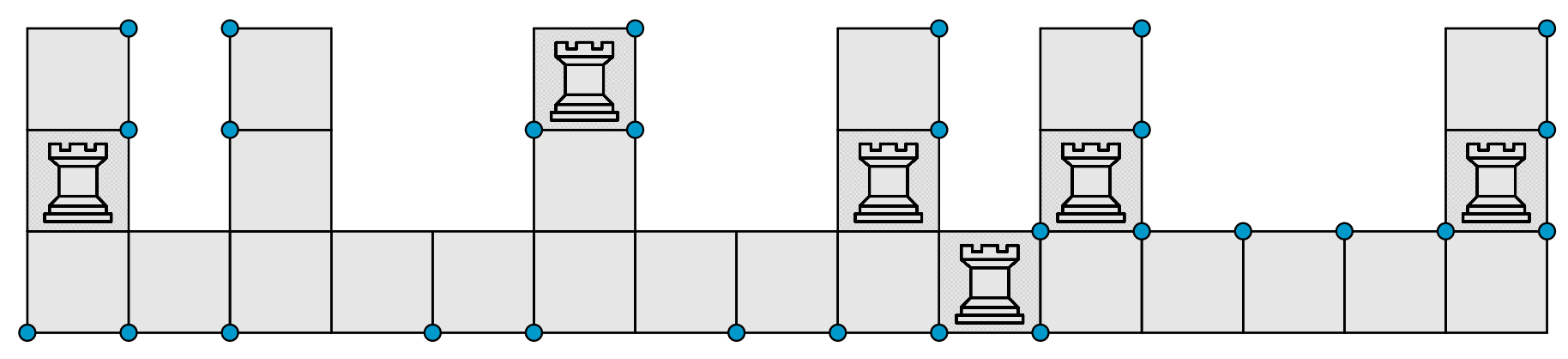}
		\caption{Example of case (2)-(B)-(b)-(i).}
		\label{Figure 2Bb i}
        \end{figure} 
        \item $F_2$ can be defined similarly in all other cases.     
   \end{enumerate}

    \item Assume that $R_1$ is in $A_{11}$.
    
    \begin{enumerate}[(i)]
        \item If $i_t=r+1$ and $i_t\in K_t$ (see Figure \ref{Figure 2Bc i}), then we set $F_2=[(2,3),c_{11}]\cup\big(\bigcup_{k\in K_t\setminus\{r+1\}} [a_{k1}+(0,1),c_{k1}]\big)\cup\big(\bigcup_{k\in I_t\setminus K_t} ([a_{k1}+(-1,1),a_{k}]\cup [b_k,c_{k1}])\big)\cup \big(\cup_{j\in J_t}[a_{j1}+(-1,1),c_{j1}-(1,0)]\big) \cup [d_{r1}+(1,0),b_{r1}+(1,0)]\cup \big(F_1\setminus(\{a_{k1}:k\in (I_t\setminus K_t)\cup J_t\})\big)$.
           \begin{figure}[h!]
	\centering	\includegraphics[scale=0.53]{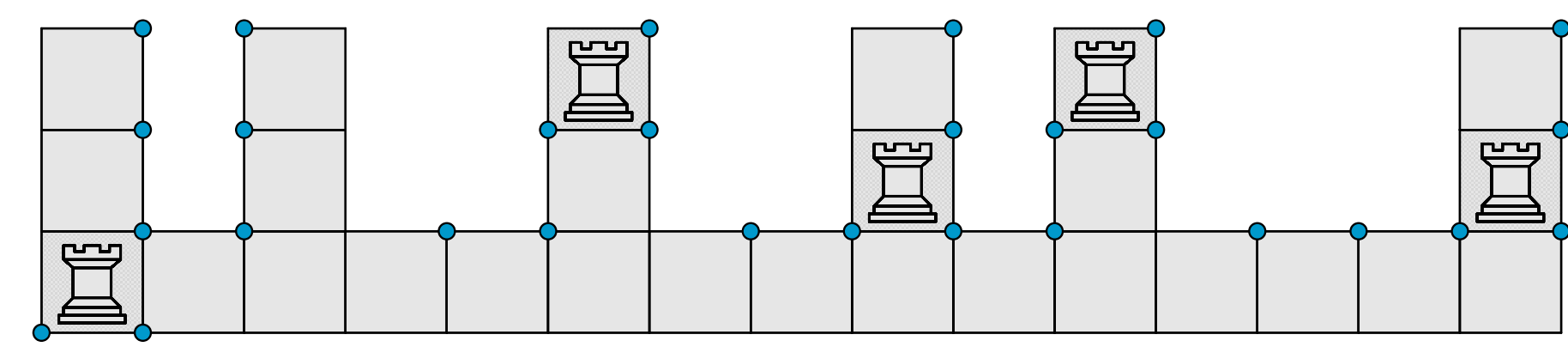}
		\caption{Example of case (2)-(B)-(c)-(i).}
		\label{Figure 2Bc i}
        \end{figure} 
        \item  In the other cases $F_2$ can be defined in a similar way.
    \end{enumerate}

    \item Assume that $R_1$ is in $A_{r+1,1}$.
    
    \begin{enumerate}[(i)]
        \item If $i_1=1$ and $i_1\in K_t$ (see Figure \ref{Figure 2Bd i}), then we set 
        $F_2=[(2,3),c_{11}]\cup\big(\bigcup_{k\in K_t\setminus\{1,r+1\}} [a_{k1}+(0,1),c_{k1}]\big)\cup\big(\bigcup_{k\in I_t\setminus K_t} ([a_{k1}+(-1,1),a_{k}]\cup [b_k,c_{k1}])\big)\cup \big(\cup_{j\in J_t}[a_{j1}+(-1,1),c_{j1}-(1,0)]\big)\cup [d_{r1}+(1,0),b_{r1}+(1,0)]\cup \Big(F_1\setminus\big(\{a_{k1}-(0,1):k\in (I_t\setminus K_t)\cup J_t\}\big)\Big)$.
        
           \begin{figure}[h!]
	\centering	\includegraphics[scale=0.53]{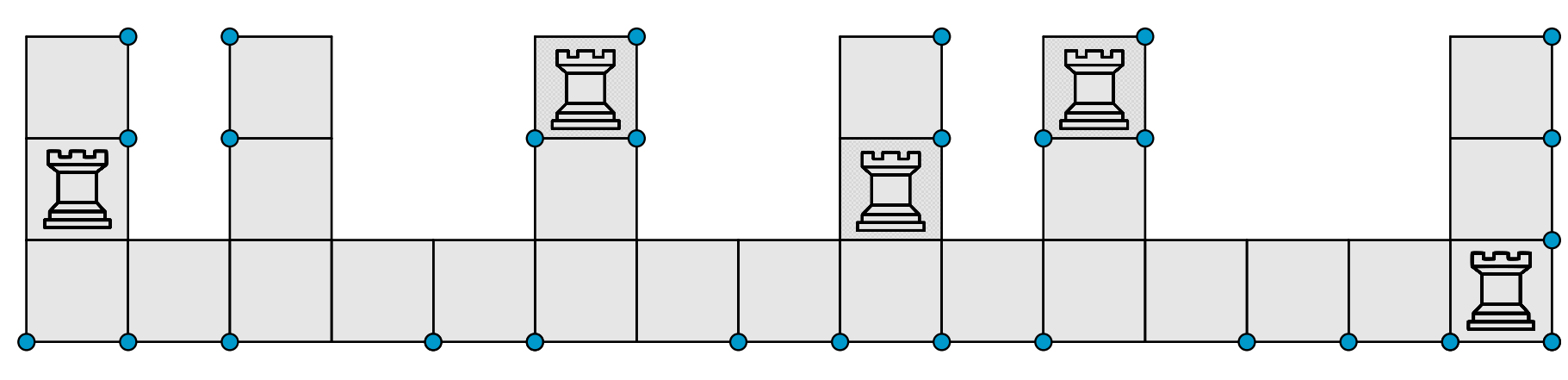}
		\caption{Example of case (2)-(B)-(c)-(i).}
		\label{Figure 2Bd i}
        \end{figure} 
        \item  In the other cases $F_2$ can be defined in a similar way.
    \end{enumerate}
    \end{enumerate}
    \end{enumerate}

     Now, consider $H_2$. To define a new suitable set $F_3$ starting from $F_2$, we distinguish and analyze the cases when a rook is placed in a cell of $H_2$ or not, based on the cases we have already examined. It is not restrictive to assume that $s > 2$, because if $s = 1$, the setting of $F_3$ is similar. Therefore, $\cP$ contains the vertical blocks $[A_{i2},A_{i3}]$ for $i \in [r+1]$.

       \noindent  Case 1. Suppose that there is no rook placed in $H_2$.
        
        \begin{enumerate}
        
            \item Assume that we are in the case (1)-(A). Then $F_3=\{a_{12}-(1,0)\}\cup (\bigcup_{k=1}^r[a_{k2}+(1,0),d_{k2}])\cup F_2$ (see Figure \ref{Figure Case 1 (1)}). We note that, if $s=1$, then $F_3=\{a_{12}-(1,0),a_{12},d_{r2}+(1,0)\}\cup (\bigcup_{k=1}^r[a_{k2}+(1,0),d_{k2}])\cup F_2$.
            
            \begin{figure}[h!]
	\centering	
        \includegraphics[scale=0.53]{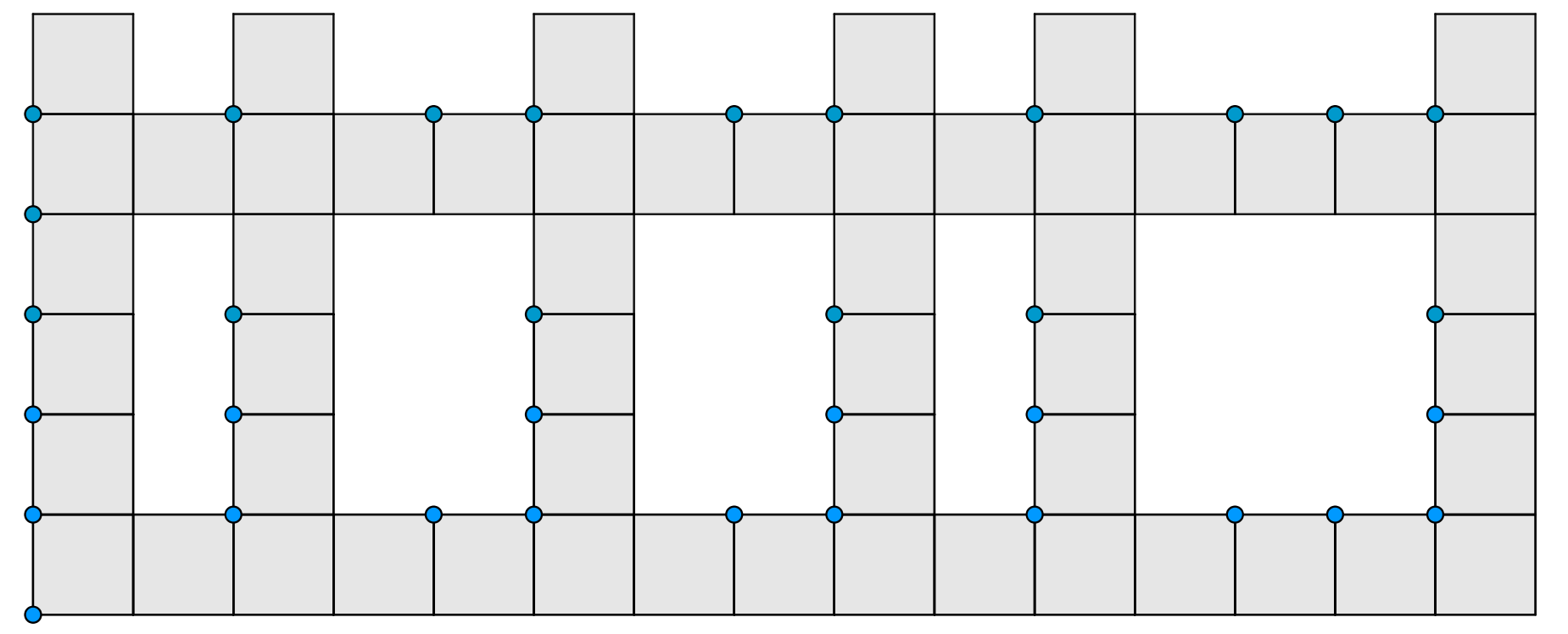}
	\caption{Example of Case 1-(1).}
	\label{Figure Case 1 (1)}
        \end{figure} 

        \begin{enumerate}[(A)]
        
            \item Suppose that there is no rook in $]A_{i2},A_{i3}[$ for all $i\in [r+1].$
            Then $F_4=[a_{12}-(1,0),c_{12}-(1,0)]\cup (\bigcup_{k=1}^r[d_{k2},b_{k2}])\cup F_3$ (see Figure \ref{Figure Case 1 (1) A}).
        
            \begin{figure}[h!]
	\centering	
        \includegraphics[scale=0.53]{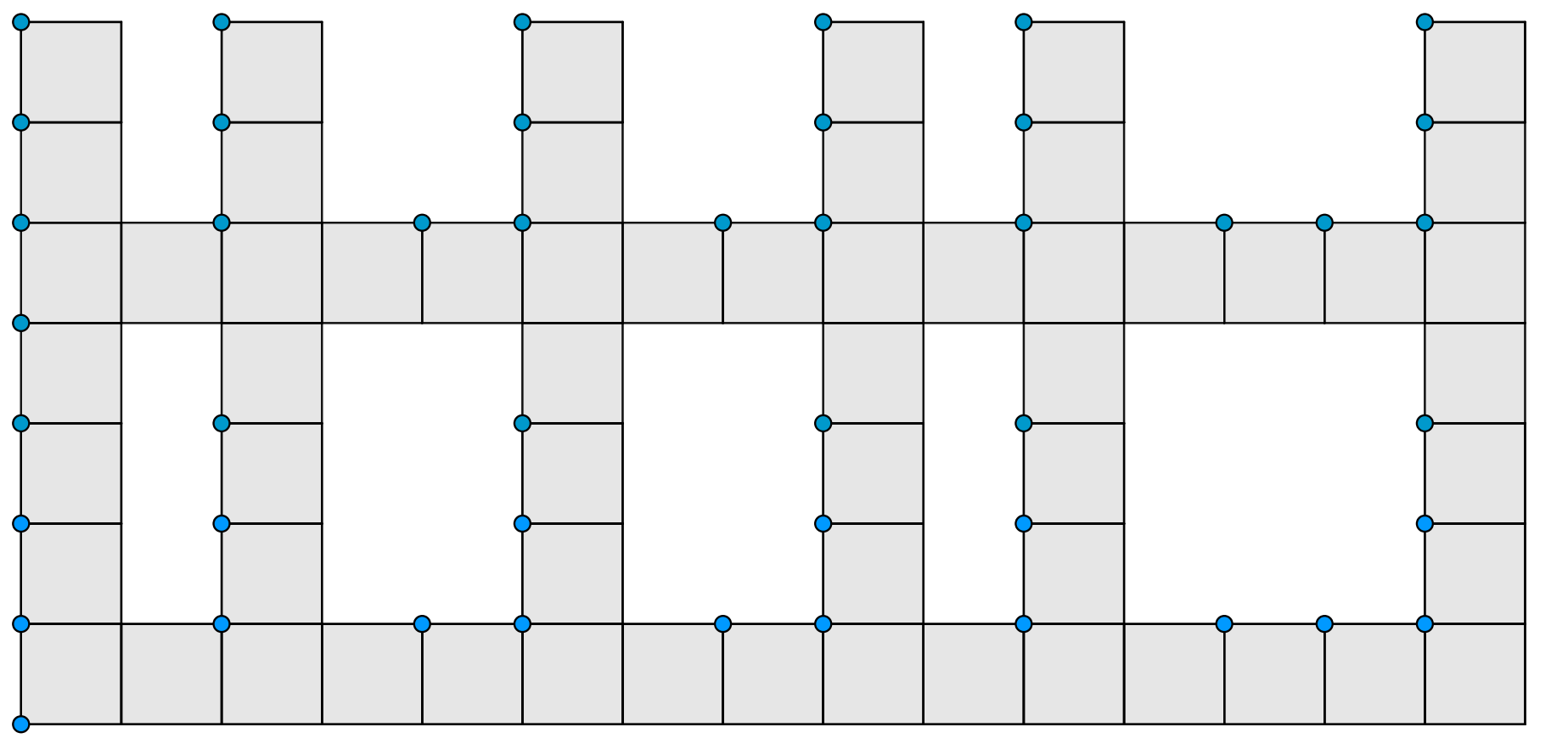}
	\caption{Example of Case 1-(1)-(A).}
	\label{Figure Case 1 (1) A}
        \end{figure} 
        
        \item Suppose that there exists a subset $L_p = \{h_1 < \dots < h_p\} \subseteq J_t$ such that there is a rook in the interval $]A_{h_j2}, A_{h_j3}]$ for some $j \in [p]$. We set $M_p = [r] \setminus L_p$. For all $i \in L_p$, we denote by $a_i$ and $b_i$ the lower left and lower right corners of the cell in the interval $]A_{i2}, A_{i3}]$, where the rook is placed. We may assume that $h_1 = 1$ and $h_p = r + 1$, because in all the other sub-cases, $F_4$ can be defined by following cases (b), (c), and (d) from (1)-(B). Then $F_4=([a_{12}-(1,0),a_{12}]\cup[a_{12},c_{12}])\cup \big(\bigcup_{i\in L_p\setminus\{1,r+1\}}([d_{i2},a_{i}]\cup[b_{i},c_{i2}])\big)\cup \big(\bigcup_{j\in M_p}[a_{j2}-(1,0),c_{j2}-(1,0)]\big) \cup ([d_{r2},d_{r2}+(1,0)]\cup[d_{r2}+(1,0),b_{r1}+(1,0)])\cup F_3$ (see Figure \ref{Figure Case 1 (1) B}).

        \begin{figure}[h!]
	\centering	
        \includegraphics[scale=0.53]{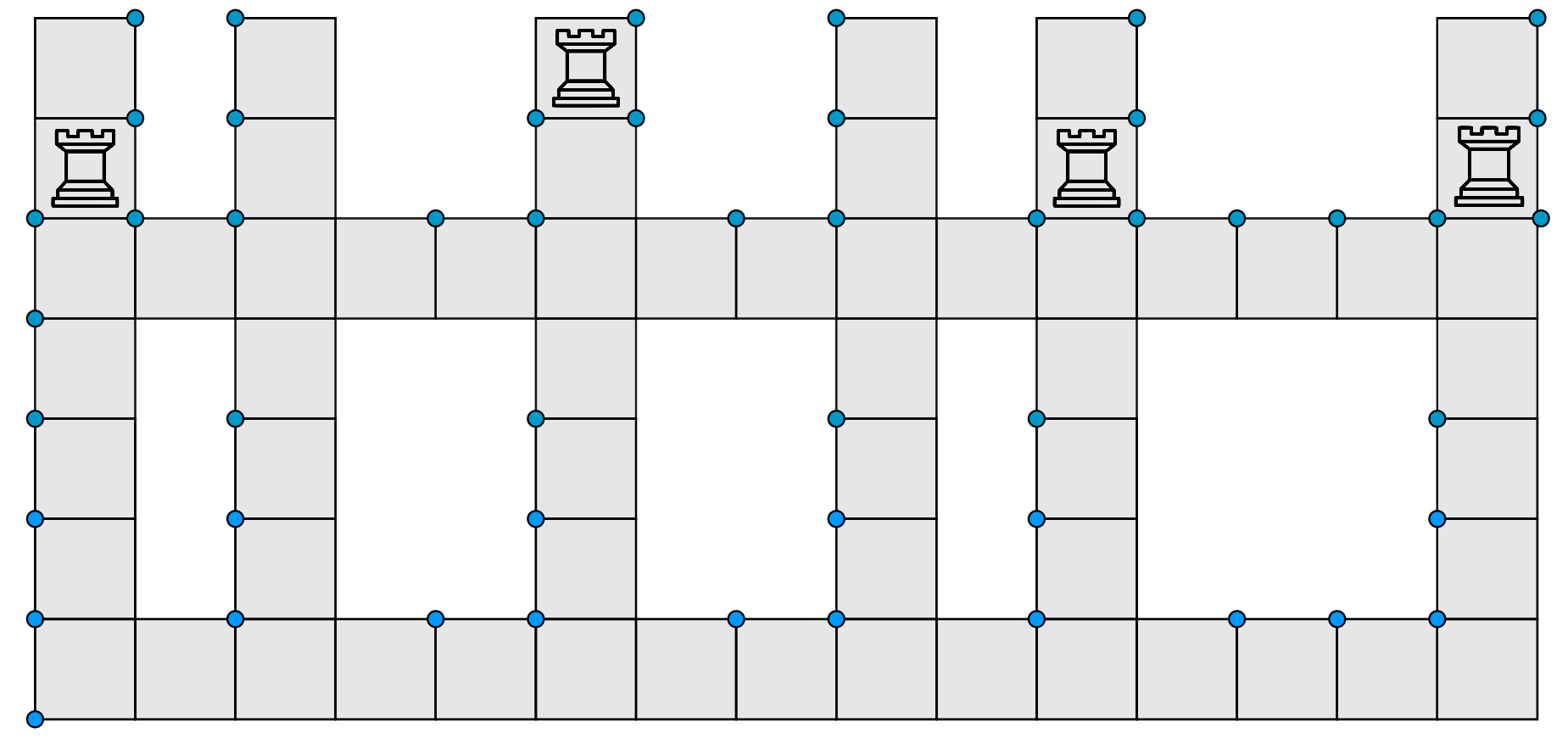}
	\caption{Example of Case 1-(1)-(B).}
	\label{Figure Case 1 (1) B}
        \end{figure} 
        
        \end{enumerate}

         \item Assume we are in case (1)-(B). Specifically, we can assume we are in sub-case (a), as in the other sub-cases (b), (c), and (d), $F_3$ can be defined in a similar manner. Then $F_3=\big([a_{12},d_{r2}+(1,0)]\setminus(\{a_{k2}:k\in J_t \}\cup \{a_{k2}-(1,0):k\in I_t\})\cup \big(F_2\setminus(\{c_{k1}:k\in I_t\}\cup \{a_{k1}-(1,0):k\in J_t\}\cup \{d_{r1}+(1,0)\})\big)$. (see Figure \ref{Figure Case 1 (2)}).
           
            \begin{figure}[h!]
	\centering	
    \includegraphics[scale=0.53]{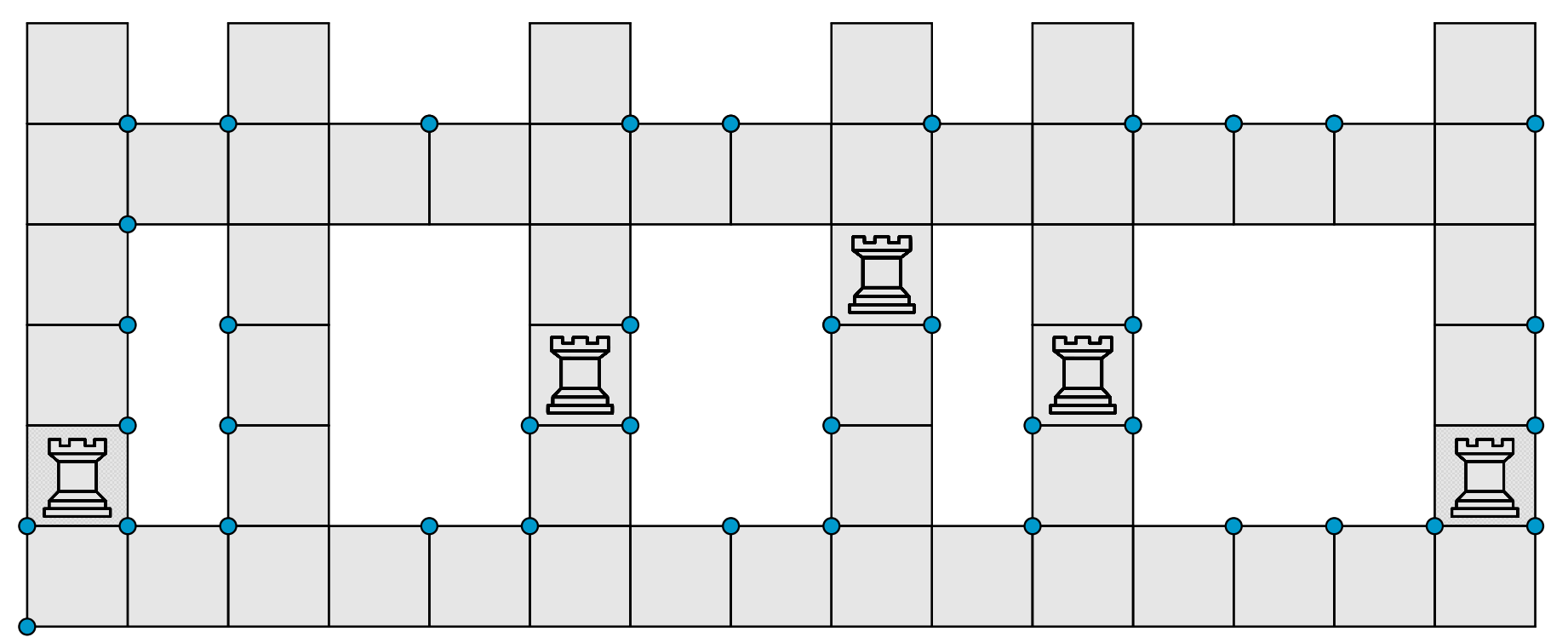}
	\caption{Example of Case 1-(2).}
	\label{Figure Case 1 (2)}
        \end{figure} 

        \begin{enumerate}[(A)]
        
            \item Suppose that there is no rook in $]A_{i2},A_{i3}[$ for all $i\in [r+1].$
            Then $F_4=[a_{12},c_{12}]\cup (\bigcup_{k\in I_t\setminus\{1,r+1\}}[a_{k2},c_{k2}])\cup (\bigcup_{k\in J_t}[a_{k2}-(1,0),c_{k2}-(1,0)]) \cup [d_{r2}+(1,0),b_{r2}+(1,0)] \cup F_3$ (see Figure \ref{Figure Case 1 (2) A}).
        
            \begin{figure}[h!]
	\centering	
        \includegraphics[scale=0.53]{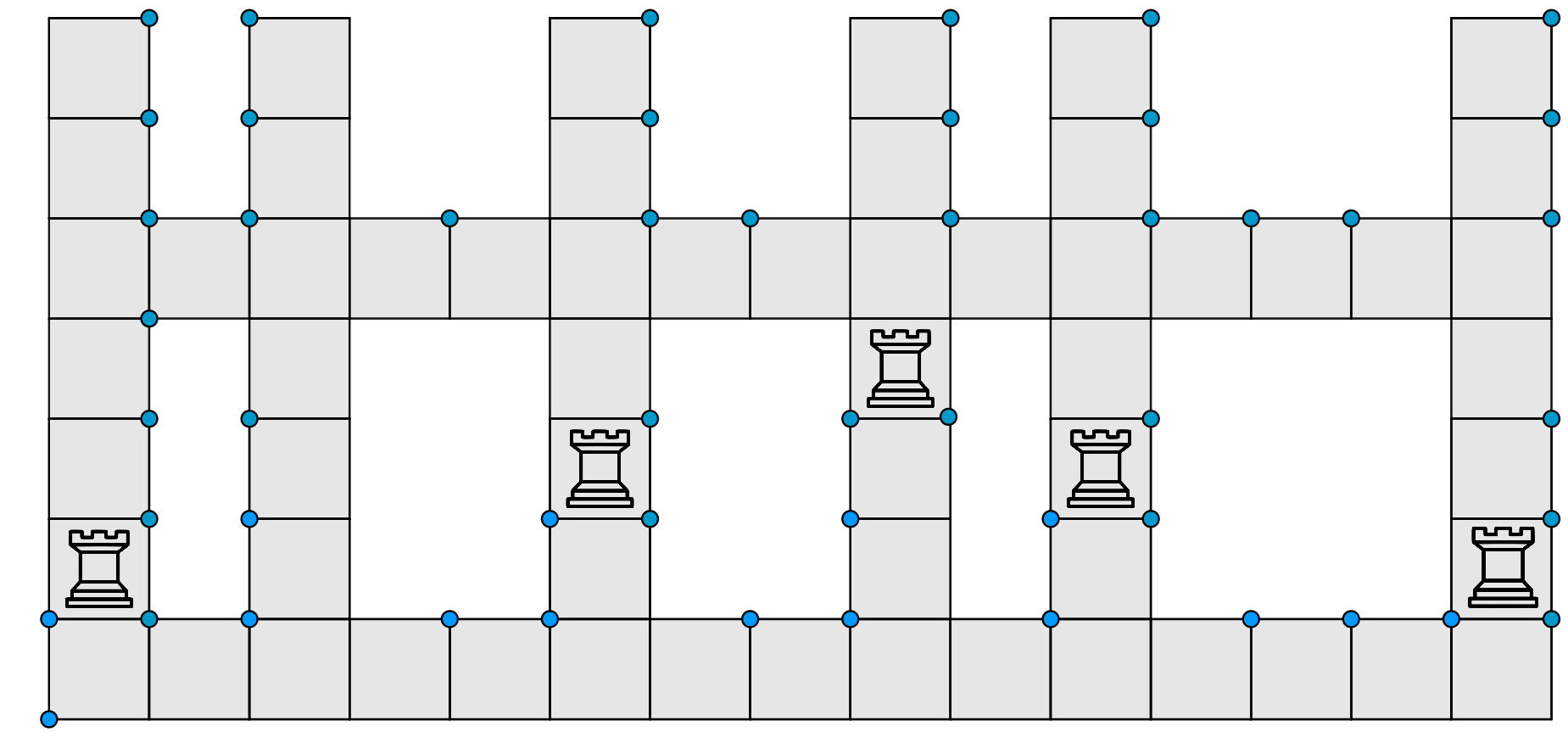}
	\caption{Example of Case 1-(2)-(A).}
	\label{Figure Case 1 (2) A}
        \end{figure} 

         \item Suppose that there exists $L_p=\{h_1<\dots<h_p\}\subseteq J_t$ such that there is a rook in $]A_{h_j2},A_{h_j3}[$. Since $i_1=1$ and $i_t=r+1$, we have that $h_1\neq 1$ and $h_p\neq r+1$. We set $N_p=\{h\in L_p:\text{a rook is placed in the cell at North of } A_{h2}\}$. For all $i \in L_p \setminus N_p$, $a_i$ and $b_i$ denote the lower left and lower right corners of the cell $]A_{i2}, A_{i3}[$, where the rook is placed. Then $F_4=[a_{12},c_{12}]\cup (\bigcup_{i\in L_p\setminus N_p}([a_{i2}-(1,0),a_{i}])\cup[b_{i},c_{i2}])\cup (\bigcup_{j\in N_p}[a_{j2},c_{j2}]) \cup (\bigcup_{j\in J_t\setminus L_p} [a_{j2}-(1,0),c_{j2}-(1,0)]) \cup (\bigcup_{i\in I_t}[a_{j2},c_{j2}])\cup  [d_{r2}+(1,0),b_{r1}+(1,0)]\cup F_3$ (see Figure \ref{Figure Case 1 (2) B}).

        \begin{figure}[h!]
	\centering	
        \includegraphics[scale=0.53]{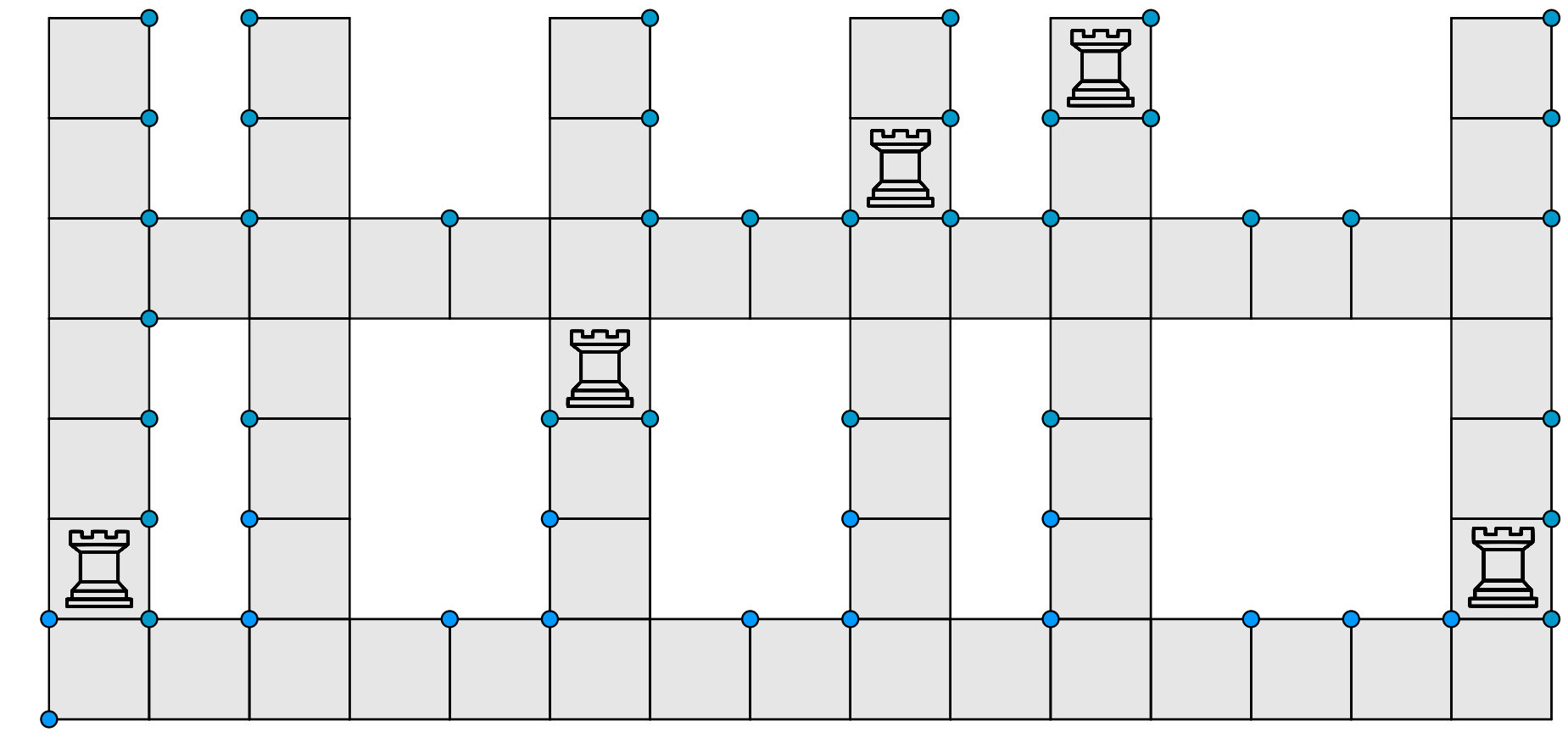}
	\caption{Example of Case 1-(2)-(B).}
	\label{Figure Case 1 (2) B}
        \end{figure}

        \end{enumerate}
        \item Observe that $F_4$ can be defined similarly to the previous two sets for all cases arising from the placement of a rook in $H_1$. For instance, consider the case (2)-(B)-(a)-(i). If we assume that there is no rook in the interval $]A_{i2}, A_{i3}[$ for all $i \in [r+1]$, then $F_4$ is defined as shown in Figure~\ref{Figure Case 1 (3)}.
        \begin{figure}[h!]
	\centering	
        \includegraphics[scale=0.53]{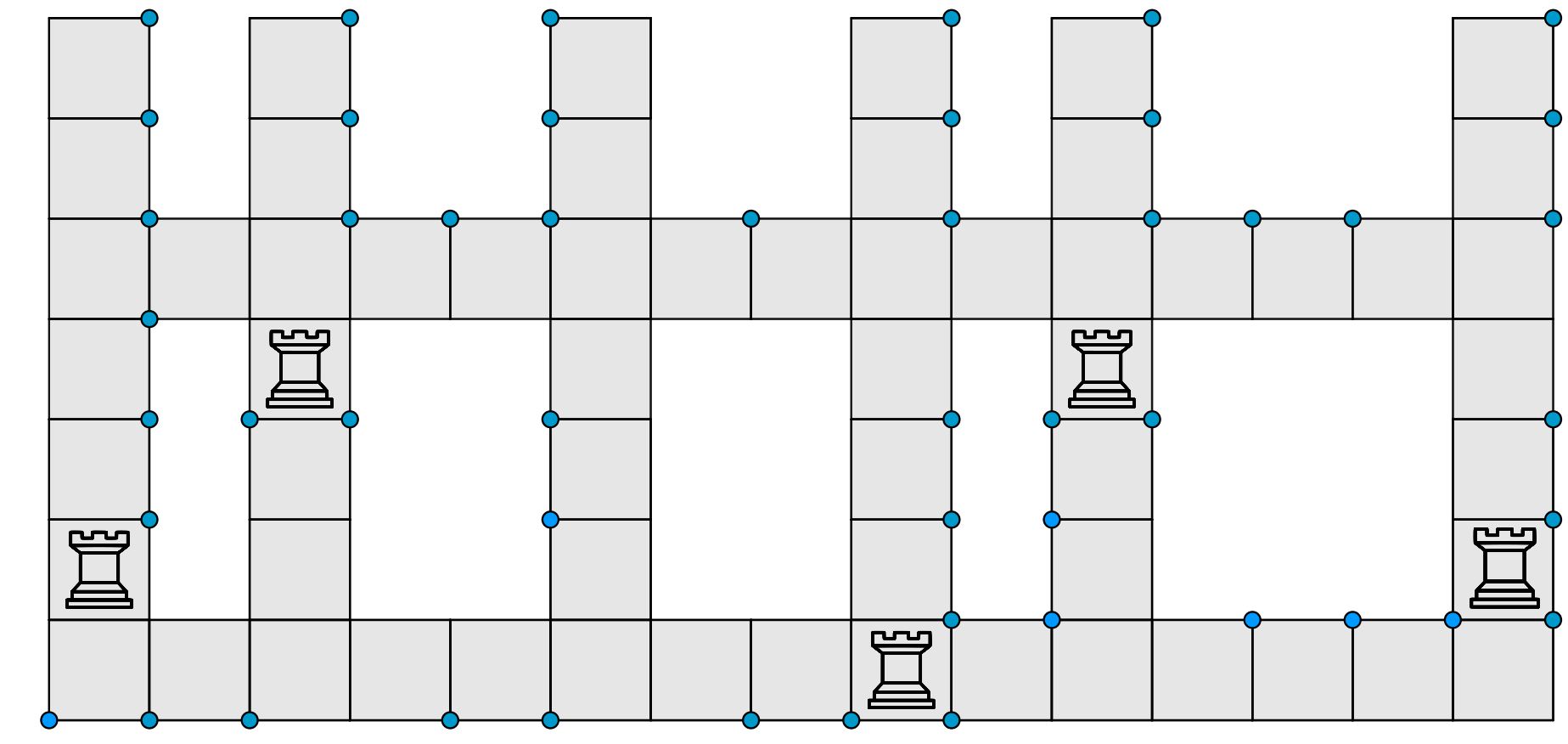}
	\caption{Example of Case 1-(3).}
	\label{Figure Case 1 (3)}
        \end{figure} 
        \end{enumerate}

        \noindent Case 2. Assume that there is a rook in $H_2$. 
        \begin{enumerate}
            \item Assume that we are in the case (1)-(A).  We denote by $c_i$ and $d_i$ respectively the lower right and the upper right corners of the cell of $H_2$ where such a rook is placed. Then $F_3=[a_{12}-(1,1),c_i]\cup [d_i,d_{r2}+(1,0)]$ (see Figure \ref{Figure Case 2 (1)}).
 
        \begin{figure}[h!]
	\centering	
        \includegraphics[scale=0.53]{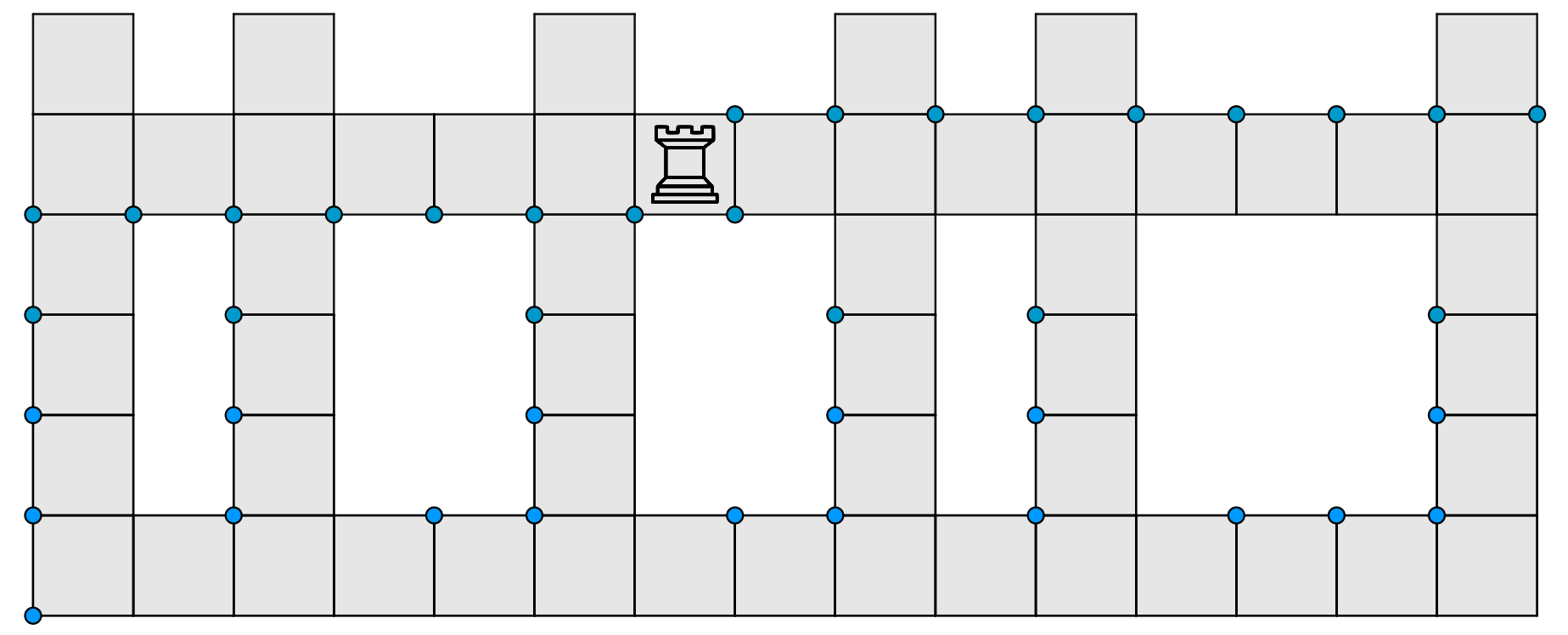}
	\caption{Example of Case 2-(1).}
	\label{Figure Case 2 (1)}
        \end{figure} 
        
        \begin{itemize}
         \item Both when no rook is in $]A_{i2},A_{i3}[$ and when a rook is in $]A_{i2},A_{i3}[$, $F_4$ can be set following a strategy similar to that one provided for $F_2$ in (2)-(A) and (2)-(B). 
        \end{itemize}

        \item Assume that we are in the case (1)-(B). As done before, we suppose that we are in particular in the sub-cases (a), because in the other sub-cases (b), (c), and (d), $F_3$ can be defined similarly. We denote by $c_i$ and $d_i$ respectively the lower right and the upper right corners of the cell of $H_2$ where such a rook is placed. Here we need to distinguish some sub-cases.
         \begin{enumerate}[(A)]
             \item Suppose that a rook is placed in $A_{i2}$ for some $i\in[r]$. One may easily observe that such a rook cannot be placed in $A_{12}$ or $A_{r+1,1}$.
             
        Then $F_3=\big([a_{12}-(0,1),c_i]\setminus\{c_{k2}-(1,0):k\in I_t,k<i\}\big)\cup \big([d_i,d_{r2}+(1,0)]\setminus(\{a_{k2}-(1,0):k\in I_t,k>i\}\cup \{d_{r1}+(1,0)\})\big)\cup F_2$. (see Figure \ref{Figure Case 2 (2) a}).
           
        \begin{figure}[h!]
	\centering	
    \includegraphics[scale=0.53]{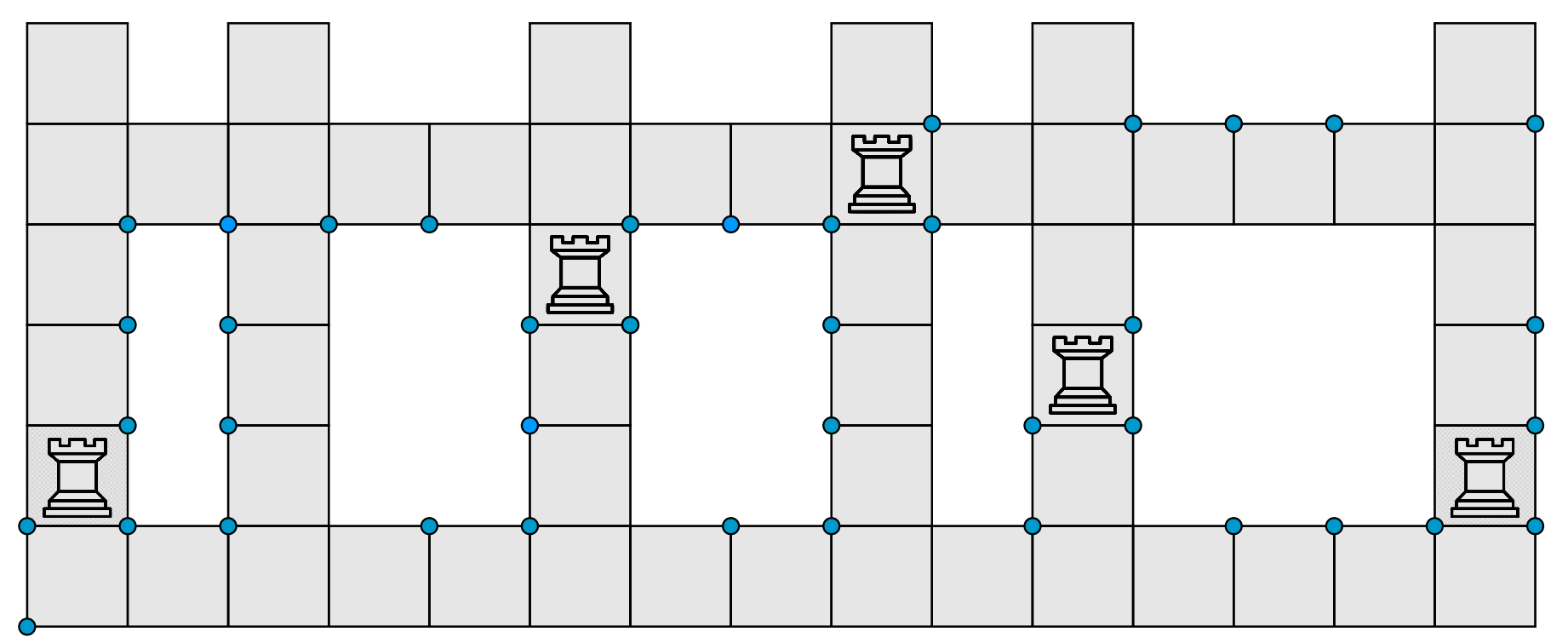}
	\caption{Example of Case 2-(2)-(A).}
	\label{Figure Case 2 (2) a}
        \end{figure}

        \item Suppose that a rook $R_1$ is in $]A_{i2},A_{i+1,2}[$ for some $i\in [r]$. We denote by $L$ and $R$ the cells of $\cP$ which are respectively at East of $A_{i2}$ and at West of $A_{i+1,2}$. Here we need to distinguish some cases in order to define $F_3$. 
        \begin{enumerate}
        \item  Suppose $L\neq R$ and $R_1$ is neither placed in $L$ nor in $R$. We denote by $c_i$ and $d_i$ the lower right and the upper right corners, respectively, of the cell of $H_2$ where $R_1$ is placed. Then $F_3=\big([a_{12}-(0,1),c_i]\setminus\{c_{k1}-(1,0):k\in I_t,k\leq i\}\big)\cup \big([d_i,d_{r2}+(1,0)]\setminus(\{a_{k2}-(1,0):k\in I_t,k\geq i\}\cup \{d_{r1}+(1,0)\})\big)\cup F_2$. (see Figure \ref{Figure Case 2-(2)-(B)-(i)}).
           
           \begin{figure}[h!]
	\centering	
    \includegraphics[scale=0.53]{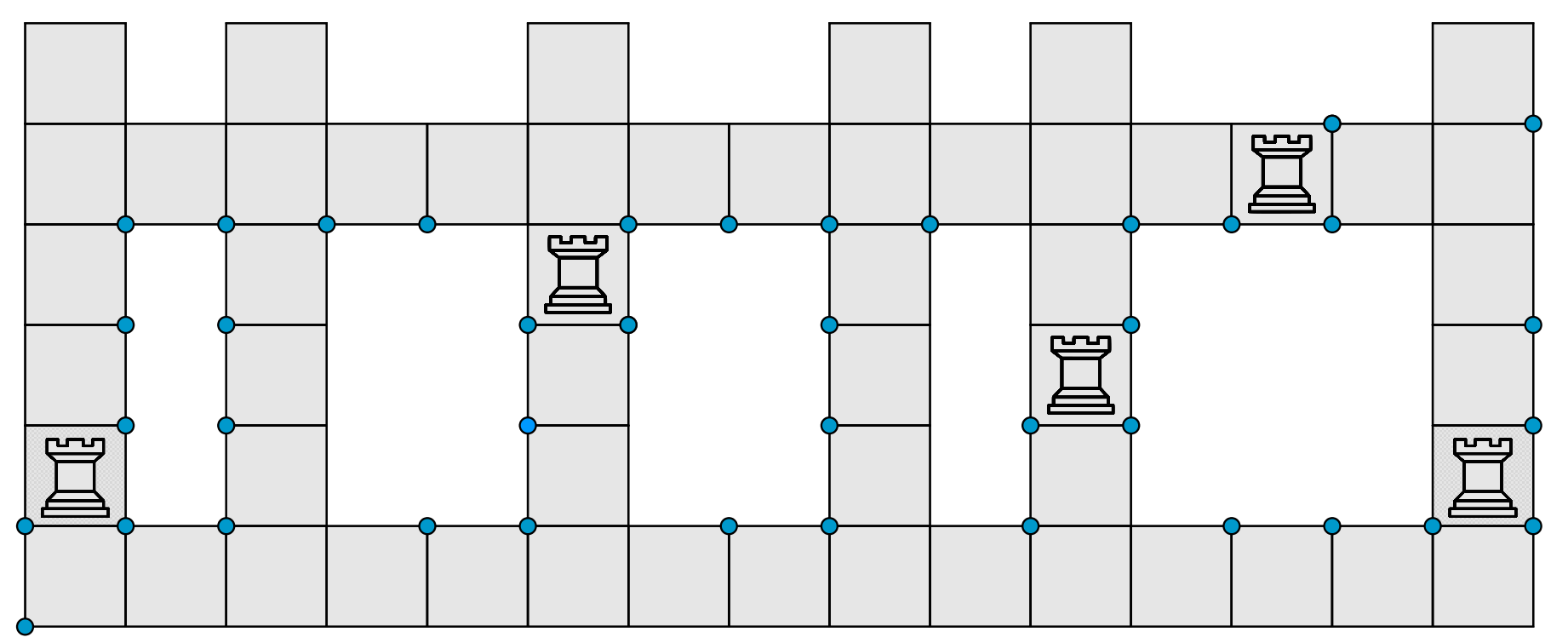}
	\caption{Example of Case 2-(2)-(B)-(i).}
	\label{Figure Case 2-(2)-(B)-(i)}
        \end{figure} 

        \item  Suppose $L\neq R$ and $R_1$ is placed in $R$. We set $F_3$ as done before, as in Figure \ref{Figure Case 2-(2)-(B)-(ii)}.
           
           \begin{figure}[h!]
	\centering	
    \includegraphics[scale=0.53]{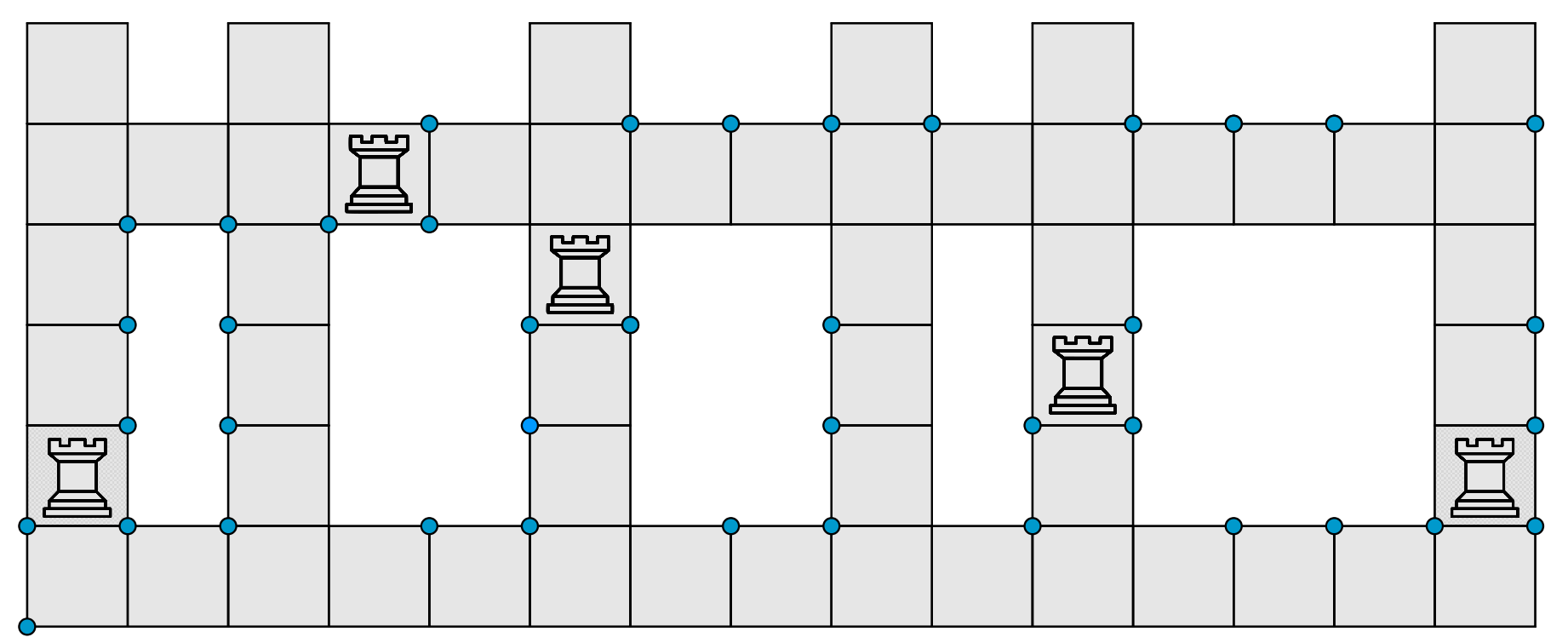}
	\caption{Example of Case 2-(2)-(B)-(ii).}
	\label{Figure Case 2-(2)-(B)-(ii)}
        \end{figure} 

         \item  Suppose $L\neq R$ and $R_1$ is placed in $L$.
         If $i+1\in I_t$, then $F_3=\big([a_{12}-(0,1),b_{i1}+(1,0)]\setminus\{c_{k1}-(1,0):k\in I_t,k\leq i\}\big)\cup \big([b_{i1}+(1,1),d_{r2}+(1,0)]\setminus(\{a_{k2}-(1,0):k\in I_t,k\geq i\}\cup \{d_{r1}+(1,0)\})\big)\cup F_2$ (see Figure \ref{Figure Case 2-(2)-(B)-(iii)}), otherwise $F_3=\big([a_{12}-(0,1),b_{i1}]\setminus\{c_{k1}-(1,0):k\in I_t,k\leq i\}\big)\cup \big([b_{i1}+(0,1),d_{r2}+(1,0)]\setminus(\{a_{k2}-(1,0):k\in I_t,k\geq i\}\cup \{d_{r1}+(1,0)\})\big)\cup F_2$.
           
           \begin{figure}[h!]
	\centering	
    \includegraphics[scale=0.53]{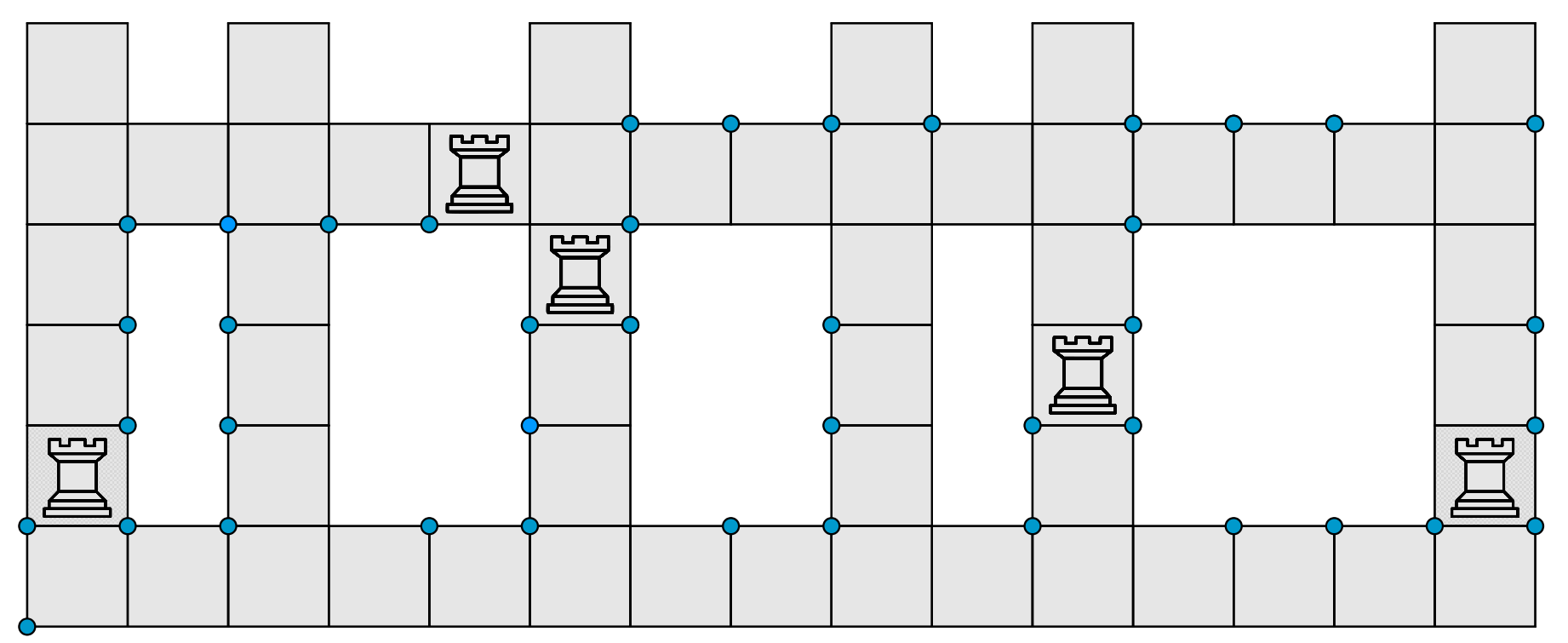}
	\caption{Example of Case 2-(2)-(B)-(iii).}
	\label{Figure Case 2-(2)-(B)-(iii)}
        \end{figure} 
         \item  Suppose $L=R$ and $R_1$ is placed in that cell. We set $F_3$ as done before, as shown in Figure \ref{Figure Case 2-(2)-(B)-(iv)}.
           
           \begin{figure}[h!]
	\centering	
    \includegraphics[scale=0.53]{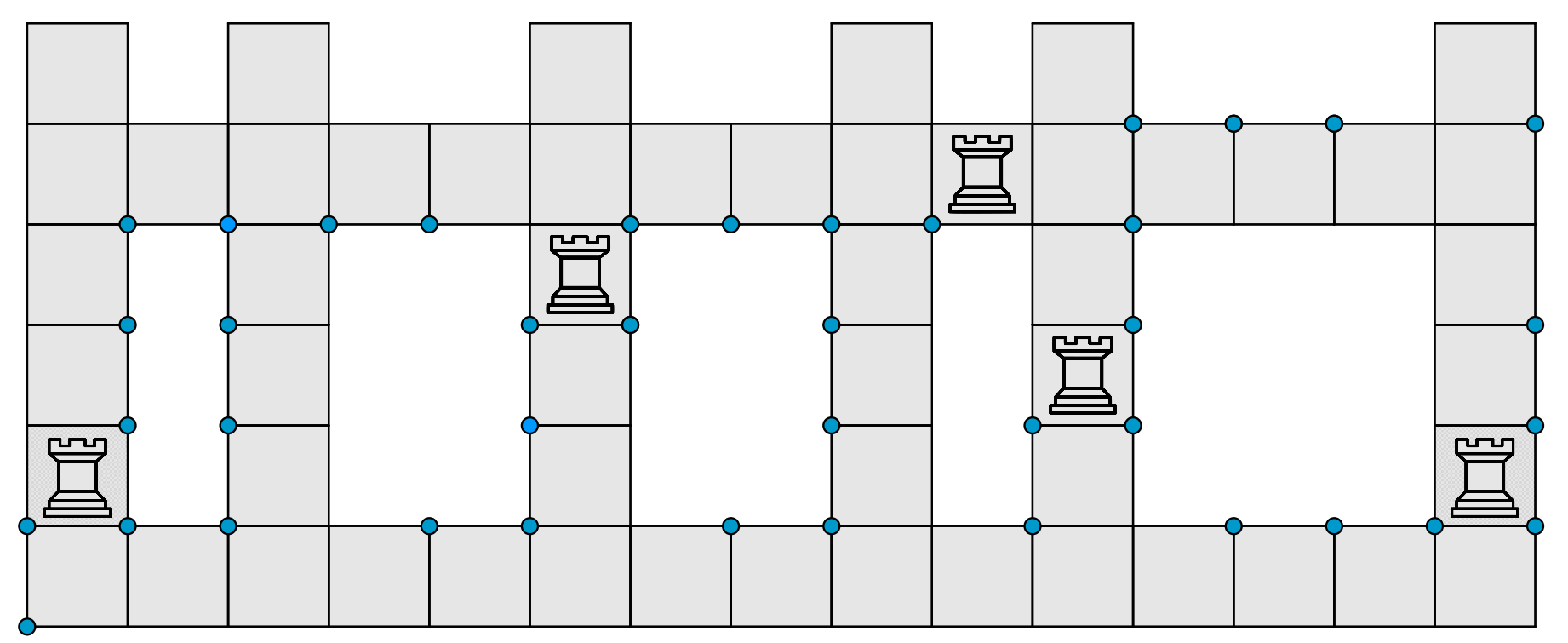}
	\caption{Example of Case 2-(2)-(B)-(iv).}
	\label{Figure Case 2-(2)-(B)-(iv)}
        \end{figure} 
        
        \end{enumerate}
        \end{enumerate}

        \end{enumerate}
    \begin{itemize}
         \item Both when no rook is in $]A_{i2},A_{i3}[$ and when a rook is in $]A_{i2},A_{i3}[$, $F_4$ can be defined following a strategy similar to that one provided for $F_2$ in (2)-(A) and (2)-(B). 
        \end{itemize}
     By continuing this procedure inductively, it becomes clear to the reader how to proceed further, and using this method, the desired facet $F$ can be easily constructed. 
    \end{proof}

       Figure \ref{Figure example rook configuration} presents two examples of facets of $\Delta_{\cP}$ corresponding to rook-configurations in $\cP$. Now, we are ready to prove the main result of this paper, which demonstrates that the $h$-polynomial of the coordinate ring of a grid polyomino $\cP$ equals the rook polynomial of $\cP$.

\begin{thm}\label{mainthm}
		Let $\cP$ be a grid polyomino. Then the rook polynomial of $\cP$ coincides with $h$-polynomial of $K[\cP]$.
	\end{thm}
	
	\begin{proof}
		From Propositions~\ref{Prop: For injective} and \ref{Prop: For surjective}, we deduce that there is a one-to-one correspondence between the facets of $\Delta_{\cP}$ with $k$ generalized steps and the $k$-rook configurations in $\cP$. Moreover, from Proposition \ref{Prop: shellability BH} and Theorem \ref{Thm: The lexicographic order gives a shelling order}, the $k$-th coefficient of the $h$-polynomial of $K[\cP]$ is equal to the number of the facets of  $\Delta_{\cP}$ having $k$ generalized steps. Hence, we get the desired result.
	\end{proof}

 	\begin{coro}\label{coro: mainthm}
		Let $\cP$ be a grid polyomino. Then the regularity of $K[\cP]$ equals the rook number of $\cP$.
	\end{coro}
    
        \begin{proof}
            Since $K[\cP]$ is Cohen-Macaulay by Theorem \ref{dim}, then the regularity of $K[\cP]$ is equal to the degree of the $h$-polynomial of $K[\cP]$, which is given by the rook number by Theorem \ref{mainthm}.
        \end{proof}

        \begin{exa}\rm\label{Exa: rook polynomial}
        By Theorem~\ref{mainthm} and Corollary~\ref{coro: mainthm}, and through computations carried out with \texttt{Macaulay2} (\cite{Package_M2, M2}), for the grid polyominoes illustrated in Figure \ref{Figure example rook configuration}, we obtain that $r(\cP_1)=8$ and $r(\cP_2)=6$ and that the rook polynomials are  
        \[r_{\cP_1}(t)=h_{K[\cP_1]}(t)=1+37\,t+508t^{2}+3324t^{3}+11280 t^{4}+20832t^{5}+20864t^{6}+10496t^{7}+2048t^{8},\]
        \[r_{\cP_2}(t)=h_{K[\cP_2]}(t)=1+27t+261t^{2}+1140t^{3}+2349t^{4}+2187t^{5}+729t^{6}.
        \]
        
         \begin{figure}[h!]
        	\centering	
        	\subfloat[$\cP_1$]{\includegraphics[scale=0.61]{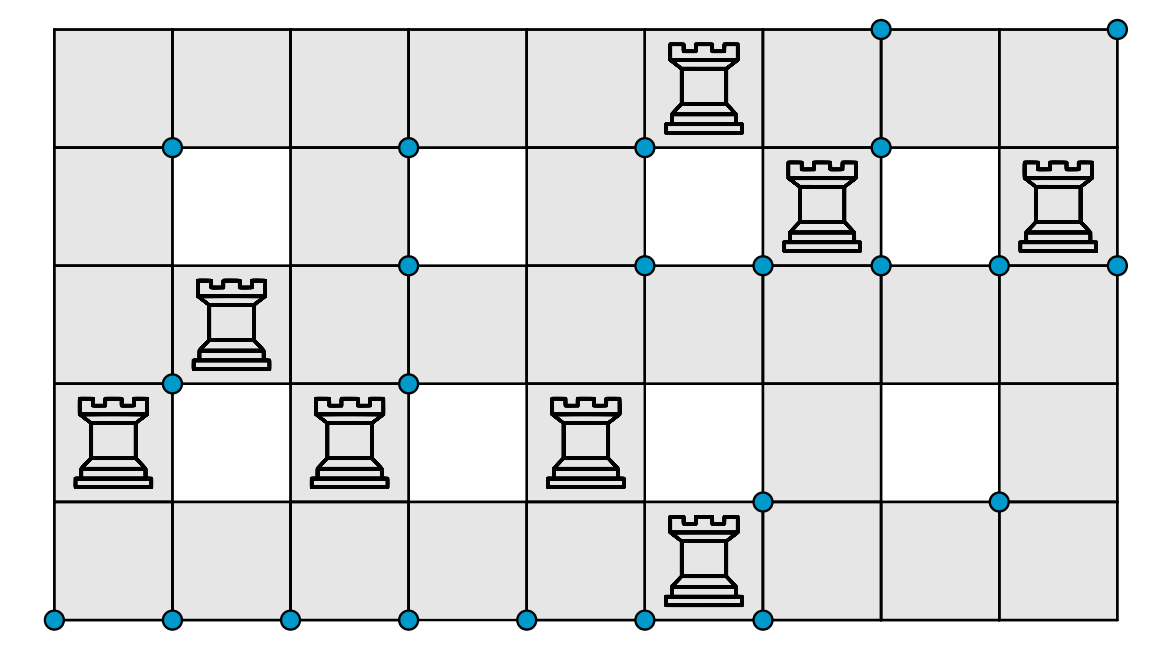}}\qquad
        	\subfloat[$\cP_2$]{\includegraphics[scale=0.61]{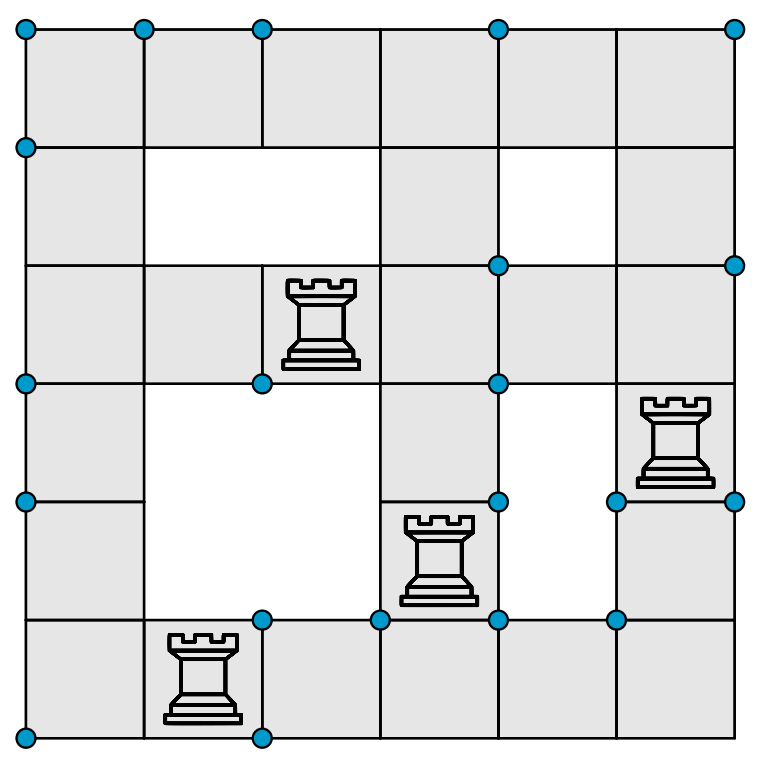}}
        	\caption{Two grid polyominoes.}
        	\label{Figure example rook configuration}
        \end{figure} 
        \end{exa}
    
As mentioned before, the coordinate ring of a grid polyomino is always Cohen-Macaulay. The next natural question is to determine which grid polyominoes have a Gorenstein coordinate ring. The following corollary answers this question.

        \begin{coro}\label{coro: Gor}
            Let $\cP$ be a grid polyomino. Then the following are equivalent:
            \begin{enumerate}
                \item $K[\cP]$ is Gorenstein;
                \item $\cP$ has one hole and consists of four maximal blocks of length three (see Figure \ref{Figure: grid polyominoes} (A)).
            \end{enumerate}
            
        \end{coro}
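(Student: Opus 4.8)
The plan is to reduce the statement to a symmetry property of the rook polynomial and then to a short structural discussion. By Theorem~\ref{dim}, $K[\cP]$ is a Cohen--Macaulay domain, so by the classical criterion of Stanley it is Gorenstein if and only if its $h$-vector is symmetric. By Theorem~\ref{mainthm} the $h$-polynomial of $K[\cP]$ equals the rook polynomial $r_{\cP}(t)=\sum_{k=0}^{r(\cP)}r_kt^k$, whose constant term is $r_0=1$. Hence $K[\cP]$ is Gorenstein if and only if $r_k=r_{r(\cP)-k}$ for all $k$; in particular this forces $r_{r(\cP)}=r_0=1$ (i.e. $\cP$ has a unique non-attacking rook configuration of maximal size) and $r_{r(\cP)-1}=r_1=\rank(\cP)$.

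For the implication $(2)\Rightarrow(1)$, up to translation and symmetry a grid polyomino with one hole and four maximal blocks of length three is the rank-$8$ \emph{square annulus} $\cP=[(1,1),(4,4)]\setminus\{C\}$, where $C$ is the cell with lower left corner $(2,2)$. A direct count of its non-attacking rook configurations gives $r_0=r_4=1$, $r_1=r_3=8$, $r_2=16$, so $r_{\cP}(t)=1+8t+16t^2+8t^3+t^4$ is palindromic and $K[\cP]$ is Gorenstein by the first paragraph. (Alternatively, $\cP$ is the smallest closed path polyomino, and one may invoke the Gorenstein classification of closed paths from \cite{Cisto_Navarra_Hilbert_series}.)

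For $(1)\Rightarrow(2)$, assume $K[\cP]$ is Gorenstein, so $r_{\cP}(t)$ is palindromic and $\cP$ has a unique maximal non-attacking rook configuration. I would first exclude the case of more than one hole: if $\cP$ has $rs\geq2$ holes in the notation of Definition~\ref{grid}, then along a width-one corridor near a change of direction the rook of a maximal configuration can be seated in either of (at least) two cells, and such local choices can be made independently in distinct corridors; this produces at least two distinct maximal configurations, contradicting uniqueness. Hence $\cP$ has a single hole, and thinness then forces $\cP$ to be a width-one rectangular frame determined by its bounding box $[(1,1),(m,n)]$ with $m,n\geq4$, the hole being the central $(m-3)\times(n-3)$ block of cells. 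For any such frame $r(\cP)=4$, and a short computation (the number of maximal configurations being $[(m-3)(n-3)]^2$) shows that the maximal configuration is unique if and only if $m=n=4$; then $\cP$ is the square annulus, which consists of four maximal blocks of length three.

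The main obstacle is the implication $(1)\Rightarrow(2)$, more precisely the claim that every grid polyomino other than the rank-$8$ square annulus possesses at least two maximal non-attacking rook configurations (equivalently $r_{r(\cP)}\geq2>r_0$, so that the rook polynomial fails to be palindromic). Proving this cleanly requires a careful description of where the rooks of a maximal configuration must sit along the arms and at the changes of direction of $\cP$; this is most naturally organised through the structural analysis of grid polyominoes and the classification of generalized steps in Lemmas~\ref{Lemma: Structure generalized step}--\ref{Lemma: Structure generalized step - particular structure 2}, which already isolates the local patterns responsible for such choices.
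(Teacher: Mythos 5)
Your overall route is the same as the paper's: Stanley's symmetry criterion for the Cohen--Macaulay domain $K[\cP]$, Theorem~\ref{mainthm} to identify the $h$-polynomial with the rook polynomial, and the observation that Gorensteinness forces $r_{r(\cP)}=r_0=1$, i.e.\ a \emph{unique} maximal non-attacking rook configuration. Your treatment of the one-hole case is correct and is actually more self-contained than the paper's: you identify the thin one-hole grid polyominoes as width-one rectangular frames, compute $r(\cP)=4$ and the count $[(m-3)(n-3)]^2$ of maximal configurations (both of which check out, since every cell lies in one of the four maximal blocks and a corner cell would consume two of the four available block-slots), and conclude uniqueness holds only for the rank-$8$ square annulus, whose rook polynomial $1+8t+16t^2+8t^3+t^4$ you compute correctly (note this uses, as it must, the convention that two cells separated by the hole in the same lattice row do not attack). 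The paper instead disposes of the whole one-hole case, in both directions, by citing the Gorenstein classification of closed path polyominoes in \cite[Theorem 5.7]{Cisto_Navarra_Hilbert_series}; either way is fine.

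The genuine gap is exactly where you flag it: for $rs\geq 2$ holes you assert, but do not prove, that there are at least two maximal configurations. ``Local choices can be made independently in distinct corridors'' does not by itself show that a maximal configuration must place a rook in the relevant corridor, nor that both seats are compatible with the rest of the configuration. The paper closes this with a short local argument at the junction of two adjacent holes (Figure~\ref{Figure: for Gorenstein}): with $D$ the corner cell where two width-one corridors meet and $A$, $B$, $C$ suitable neighbouring cells, (i) no rook of a maximal configuration $\cR$ sits in $D$, since replacing such a rook by two rooks at $A$ and $C$ would yield an $(s+1)$-configuration; (ii) some rook of $\cR$ must lie in the maximal horizontal block through $A$, since otherwise a rook could be added at $A$; (iii) one may take that rook to be at $A$, and sliding it from $A$ to the adjacent cell $B$ of the same block produces a second maximal configuration, whence $h_s>1$. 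You do not need the full machinery of Lemmas~\ref{Lemma: Structure generalized step}--\ref{Lemma: Structure generalized step - particular structure 2} for this; a single such junction, which exists as soon as there are two holes, suffices. With that step supplied, your proof is complete.
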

        \begin{proof}
            $(2)\implies (1).$ If $\cP$ has one hole and consists of four maximal blocks of length three, then $\cP$ is a closed path polyomino without zig-zag walks (see \cite[Section 3]{Cisto_Navarra_closed_path} for details on closed paths and \cite{Trento} for the definition of a zig-zag walk), hence $\cP$ satisfies condition (1) of \cite[Theorem 5.7]{Cisto_Navarra_Hilbert_series}, implying that $K[\cP]$ is Gorenstein.\\
            $(1)\implies (2).$ By contradiction, assume that $\cP$ has a hole but does not consist of four maximal blocks of length three. Then, $\cP$ is a specific type of closed path polyomino, with exactly four changes in direction forming an $L$-configuration and two horizontal or vertical maximal blocks with rank greater than three. According to \cite[Proposition 3.6]{Cisto_Navarra_closed_path}, $\cP$ does not have zig-zag walks, meaning that $\cP$ is a closed-path polyomino without zig-zag walks. Therefore, by \cite[Theorem 5.7]{Cisto_Navarra_Hilbert_series}, $K[\cP]$ should not be Gorenstein, which contradicts condition (1). Now, assume that $\cP$ has more than one hole. Recall that by Theorem \ref{dim}, $K[\cP]$ is a Cohen-Macaulay domain. Let $h(t) = \sum_{i=0}^s h_i t^i$ be the $h$-polynomial of $K[\cP]$, where $s$ is the rook number of $\cP$ by Corollary \ref{coro: mainthm}. If $K[\cP]$ is Gorenstein, a well-known result by Stanley (\cite{Stanley}) implies that $h_i = h_{s-i}$ for all $i \in \{0, \dots, s\}$, and in particular, we have $h_s = 1$. By Corollary \ref{coro: mainthm}, $h(t)$ is the rook polynomial of $\cP$, so $h_s = 1$ would imply that there is a unique configuration for placing the maximum number of rooks in a non-attacking position on $\cP$. We will now show that this is not the case, i.e., that $h_s > 1$. We may assume that the cells of $\cP$ are in the configuration shown in Figure~\ref{Figure: for Gorenstein}; otherwise, it is sufficient to apply a suitable rotation. Let $\cR$ be an $s$-rook configuration in $\cP$. Note that a rook $R$ in $\cR$ cannot be in the cell $D$ shown in Figure \ref{Figure: for Gorenstein}. If it did, we could remove $R$ from $\cR$ and add two rooks in the cells $A$ and $C$, obtaining an $(s+1)$-rook configuration in $\cP$, which contradicts the assumption that $s$ is the rook number. Furthermore, observe that a rook in $\cR$ must be placed in the maximal horizontal interval of $\cP$ containing $A$. Otherwise, as described earlier, we could add a rook in $A$ to $\cR$, obtaining an $(s+1)$-rook configuration, which again leads to a contradiction. Therefore, we can assume that there is a rook in $\cR$ placed in $A$. If we move this rook from $A$ to $B$, we obtain a new $s$-rook configuration in $\cP$, which implies that $h_s > 1$, as we have claimed.

              \begin{figure}[h!]
	\centering	
    \includegraphics[scale=0.56]{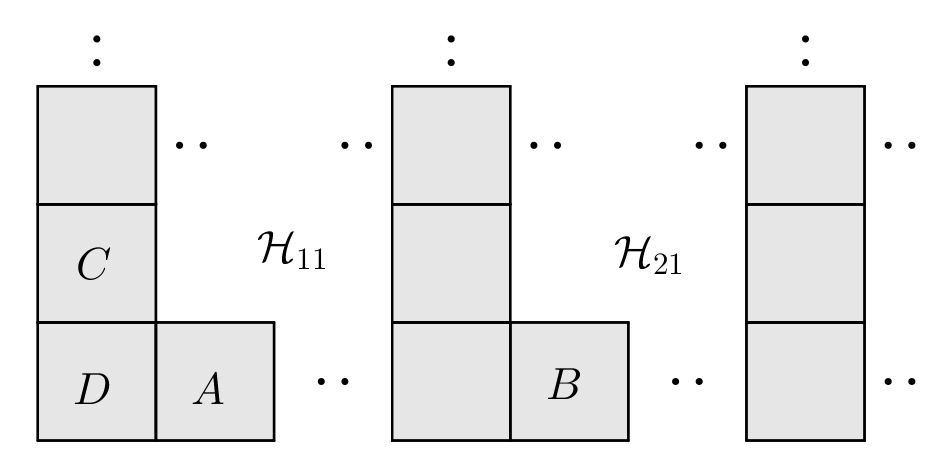}
	\caption{More than one hole in a grid polyomino.}
	\label{Figure: for Gorenstein}
        \end{figure} 
        \end{proof}

        \section*{Acknowledgement}  RD was supported by the Alexander von Humboldt Foundation and by a grant of the Ministry of Research, Innovation and Digitization, CNCS - UEFISCDI, project number PN-III-P1-1.1-TE-2021-1633, within PNCDI III. FN  is supported by The Scientific and Technological Research Council of Turkey - T\"UBITAK (Grant No: 122F128) and he is enrolled in the group GNSAGA of INDAM. This work was started at the Institute of Mathematics of the Romanian Academy in Bucharest when the second author visited the first one. He wants to express his gratitude for her hospitality and her nice support. RD and FN are sincerely grateful to the reviewers for carefully reading the article.\\

        \begin{small}
        \noindent {\bf Declaration of competing interest.} The authors declare that they have no known competing financial interests or personal relationships that could have appeared to influence the work reported in this paper.\\
        {\bf Data Availability Statement.} No data was used for the research described in the article.  
        \end{small}

\end{document}